\newcommand{\nc}{\newcommand}
\nc{\browntext}[1]{\textcolor{brown}{#1}}
\nc{\greentext}[1]{\textcolor{green}{#1}}
\nc{\redtext}[1]{\textcolor{red}{#1}}
\nc{\bluetext}[1]{\textcolor{blue}{#1}}
\nc{\brown}[1]{\browntext{ #1}}
\nc{\green}[1]{\greentext{ #1}}
\nc{\red}[1]{\redtext{ #1}}
\nc{\blue}[1]{\bluetext{ #1}}
\nc{\zb}[1]{\redtext{From zb: #1}}
\newtheorem{thm}{Theorem}  [section]
\newtheorem{lem}[thm]{Lemma}
\newtheorem{prop}[thm]{Proposition}
\newtheorem{example}[thm]{Example}
\theoremstyle{remark}
\newtheorem{rem}[thm]{Remark}
\numberwithin{equation}{section}
\newcommand{\mbf}{\mathbf}
\newcommand{\mrm}{\mathrm}
\newcommand{\A}{\mathcal A}
\newcommand{\cbinom}[2]{\left\{ \begin{matrix} #1\\#2 \end{matrix} \right\}}
\newcommand{\dvev}[1]{{\mathfrak{t}}_{\ev}^{{(#1)}}}
\newcommand{\dv}[1]{{\mathfrak{t}}_{\odd}^{{(#1)}}}
\newcommand{\dvd}[1]{t_{\odd}^{{(#1)}}}
\newcommand{\dvp}[1]{t_{\ev}^{{(#1)}}}
\newcommand{\ev}{\mrm{ev}}
\newcommand{\kk}{h}
\newcommand{\la}{\lambda}
\newcommand{\LR}[2]{\left\llbracket \begin{matrix} #1\\#2 \end{matrix} \right\rrbracket}
\newcommand{\N}{\mathbb N}
\newcommand{\odd}{\mrm{odd}}
\newcommand{\one}{\mathbf 1}
\newcommand{\qbinom}[2]{\begin{bmatrix} #1\\#2 \end{bmatrix} }
\newcommand{\Q}{\mathbb Q}
\newcommand{\sll}{\mathfrak{sl}}
\newcommand{\ttt}{\mathfrak{t}}
\newcommand{\U}{\mbf U}
\newcommand{\Udot}{\dot{\mbf U}}
\newcommand{\UA}{{}_\A{\mbf U}}
\newcommand{\UAdot}{{}_\A{\dot{\mbf U}}}
\newcommand{\Ui}{{\mbf U}^\imath}
\newcommand{\vev}{v^+_{2\la} }
\newcommand{\vodd}{v^+_{2\la+1} }
\newcommand{\vs}{\varsigma}
\newcommand{\Y}{\check{E}}
\newcommand{\Z}{\mathbb Z}
\newcommand{\B}{\mbf V}
\newcommand{\BA}{{}_\A{\B}}
\newcommand{\DA}{{}_\A{\B}'}
\title[Formulae of $\imath$-divided powers]{Formulae of $\imath$-divided powers in ${\mathbf U}_q(\mathfrak{sl}_2)$}
\author[Collin Berman]{Collin Berman}
\author[Weiqiang Wang]{Weiqiang Wang}
\address{ Department of Mathematics\\ University of Virginia\\ Charlottesville, VA 22904}
\email{cmb5nh@virginia.edu (Berman), ww9c@virginia.edu (Wang)}
\keywords{Quantum groups, canonical basis, divided powers, quantum symmetric pairs} 
\subjclass[2010]{Primary 17B10}
\begin{document}

\begin{abstract} 
The existence of the $\imath$-canonical basis (also known as the $\imath$-divided powers) for the coideal subalgebra of the quantum $\mathfrak{sl}_2$ were established by Bao and Wang, with conjectural explicit formulae. In this paper we prove the conjectured formulae of these $\imath$-divided powers. This is achieved by first establishing  closed formulae of the $\imath$-divided powers in basis for quantum $\mathfrak{sl}_2$ and then formulae for the $\imath$-canonical basis in terms of Lusztig's divided powers in each finite-dimensional simple module of quantum $\mathfrak{sl}_2$. These formulae exhibit integrality and positivity properties.
\end{abstract}

\maketitle


\section{Introduction}

\subsection{}

Denote by $\U$ the quantum group of $\sll_2$ over $\Q(q)$ with standard generators $E, F, K^{\pm 1}$. The divided powers of $F$ have a simple expression 
\[
F^{(n)} =F^n/ [n]!, \qquad \text{ where }\quad [n] =(q^n -q^{-n}) /(q-q^{-1}), 
\]
and $\{F^{(n)} \mid n\ge 0\}$ forms the canonical basis for the negative half of $\U$; cf. Lusztig \cite{L93}. Alternatively, the divided powers satisfy and are in turn determined by the simple recursive relation: 
\[
F \cdot F^{(n)}=[n+1] F^{(n+1)}, \qquad F^{(1)} =F.
\] 

\subsection{}

The coideal subalgebra $\Ui$ of $\U$ is a polynomial algebra in one variable $t$, and
$(\U, \Ui)$ forms a quantum symmetric pair of rank one (cf. Koornwinder \cite{K93}; Letzter \cite{Le99} and the references therein). 
Our convention in this paper follows the paper \cite{BW16} by Bao and the second author, which differs from \cite{BW13}. 
Let us take 
\[
t =q^{-1}EK^{-1}+F+K^{-1}.
\]
As first shown in \cite{BW13},
there are two distinct $\Z[q,q^{-1}]$-forms for $\Ui$, respectively; these two integral forms correspond to a choice of parity $\{\ev, \odd\}$ of highest weights of finite-dimensional simple $\U$-modules. 
The existence of $\imath$-canonical basis (also called $\imath$-divided powers in the current rank one case) 
in each integral form of $\Ui$ was established {\em loc. cit.}.  
Explicit formulae for the $\imath$-divided powers
$\dvp{n}$ and $\dvd{n}$, as polynomials in $t$ of degree $n$, were conjectured in \cite{BW13}
with $\dvp{1} =\dvd{1}=t$, see \eqref{def:idp} and \eqref{eq:dv} below.

A useful and alternative viewpoint for the conjectural formulae of the $\imath$-divided powers   
is that the 
$\imath$-divided powers $\dvp{n}$ of even weights satisfy and are in turn determined by the following recursive relations: 
\begin{align}  
\label{eq:ttodd2:dvp} 
\begin{split}
 \dvp{1} =t, \qquad   t \cdot \dvp{2a} &=  [2a+1] \dvp{2a+1},  
\\
t \cdot \dvp{2a+1} &=[2a+2] \dvp{2a+2}+[2a+1] \dvp{2a}, \quad \text{ for } a \ge 0.
\end{split}
\end{align}
The other $\imath$-divided powers $\dvd{n}$ are determined by different recursive relations; see \eqref{eq:ttodd2:dvd}.

There actually exists a family of embeddings of $\Ui$ into $\U$; cf. \cite{Le99} (the family actually appeared in Koornwinder \cite{K93} under the terminology of twisted primitive elements). More precisely, in the notation of \cite{BW16} we have $t \mapsto q^{-1}EK^{-1}+F+\kappa K^{-1}$, for an arbitrarily fixed (bar invariant) element $\kappa \in \Z[q,q^{-1}]$.
Besides the one considered above, another distinguished choice is to choose $\kappa=0$, that is, to choose the polynomial generator of the coideal algebra $\Ui$ to be 
$$\mathfrak{t}=q^{-1}EK^{-1}+F. 
$$
The second author proposed with Huanchen Bao to use the same conjectural
polynomial formulae for the corresponding $\imath$-divided powers, $\dvev{n}$ and $\dv{n}$, but with the parity reversed; that is, $\dvev{n}$ (and $\dv{n}$, respectively) satisfy the same recursive relations for $\dvd{n}$; see \eqref{eq:tt} (and for $\dvp{n}$, respectively; see \eqref{eq:ttodd2}).

We refer the interested reader to \cite{BW13, LW15, BW16} and the references therein for more background and the larger picture (including category $\mathcal O$, flag varieties, canonical bases) where the $\imath$-divided powers fit in. As the $\imath$-divided powers are basic building blocks for (the integral forms of)  quantum symmetric pairs of higher ranks, for future applications it is desirable to better understand the $\imath$-divided powers
 (whose structures are much more involved than the divided powers in quantum groups). 

%
%
\subsection{} 

In this paper we first
establish compact closed formulae for $\dvev{n}, \dv{n}$, $\dvp{n}$ and $\dvd{n}$ in terms of PBW (and canonical) bases in $\U$ (and in the modified quantum group $\Udot$). 
We then show that the $\imath$-divided powers send the highest weight vector of an arbitrary finite-dimensional simple $\U$-module $L(\la)$ to zero or an $\imath$-canonical basis element of $L(\la)$ (whose existence was established earlier in \cite{BW13, BW16}). Finally from this and the defining property of the $\imath$-canonical basis for $\Ui$, we establish the BW conjectural formulae for the $\imath$-divided powers of $\Ui$ as polynomials in $t$ (or $\ttt$). All formulae exhibit remarkable integrality and positivity properties. The $q\mapsto 1$ limits of the formulae in this paper are also nontrivial and interesting in their own. 

Along the way, we construct and study some interesting non-standard $\Z[q,q^{-1}]$-subalgebras of $\U$, which contain the $\imath$-divided powers and admit several natural (anti-)involutions. These subalgebras appeared in earlier discussions between Bao and the second author.

\subsection{} 

The $\imath$-divided powers as polynomials in $t$ (or $\mathfrak{t}$) are not simply monomials in contrast to the standard divided powers $F^{(n)}$. Consequently, the existence of closed formulae for these $\imath$-divided powers in the non-commutative algebra $\U$ is a rather remarkable fact and can be viewed as a manifestation of the intrinsic nature of the $\imath$-divided powers. 
We discovered the formulae after studying many examples by hand and by Mathematica. 
The inductive proofs of these formulae are elementary though sometimes lengthy, and they are included for the sake of completeness. 
The formulae for $\dvev{n}$ and $\dv{n}$ are easier and will be presented first in Sections~\ref{sec:dv:even} and \ref{sec:dv:odd} (see Theorems~\ref{thm:iDP} and \ref{thm:iDP:odd}), while the formulae for $\dvp{n}$ and $\dvd{n}$ appear in Sections~\ref{sec:dvK:even} and \ref{sec:dvK:odd} (see Theorems~\ref{thm:iDPK} and \ref{thm:iDPK:odd}). 

These $\imath$-divided power formulae are subsequently reformulated in the modified quantum group $\Udot$; they can be seen as a positive integral (i.e., $\N[q,q^{-1}]$-) linear combination of the canonical basis of $\Udot$ (cf. \cite[25.3]{L93}) with coefficients expressed in terms of $q$-binomial and $q^2$-binomial coefficients. We note a consistent phenomenon that the $q^2$-binomial coefficients also arise naturally in formulae for the $\imath$-canonical basis elements of the $\imath$-Schur algebra or rank one (see \cite[Theorem~7.6]{LW15}). 

The explicit expansion formulae of the $\imath$-divided powers in $\U$ allow us to obtain their precise images on the highest weight vector of any finite-dimensional simple $\U$-module $L(\la)$. We show that the nonzero images form a basis for $L(\la)$, whose transition matrix to the usual canonical basis of $L(\la)$ is uni-triangular with non-diagonal entries in $q^{-1} \Z_{\ge 0} [q^{-1}]$, and hence they are by definition the $\imath$-canonical basis of $\Ui$. (There is a subtle difference between the cases for $t$ and for $\ttt$ though.) From these the BW conjecture that the $\imath$-divided powers form the $\imath$-canonical basis of $\Ui$ follows; see Theorems~\ref{thm:iCB:ev}, \ref{thm:iCB:odd}, \ref{thm:iCB:evK} and \ref{thm:iCB:oddK}  for the cases of $\dvev{n}, \dv{n}$, $\dvp{n}$ and $\dvd{n}$, respectively. 

It is also possible to establish the conjectural $\imath$-divided power formulae in \cite{BW13} in the geometric framework of \cite{LW15}. The two choices of the parameter $\kappa$ (with $\kappa=1$/$0$) are distinguished as they correspond to type B/D flag varieties and Kazhdan-Lusztig theories. The cases with more general parameter $\kappa$ are also interesting but much more challenging, and we shall return to this in a separate occasion. 
The study of $\imath$-divided powers and more generally of $\imath$-canonical bases has opened up new and fruitful interactions with $q$-combinatorics among others, which will be further pursued elsewhere.

\vspace{.3cm}

{\bf Acknowledgement.} 
This project deals with some computational 
aspects of the program on canonical bases arising from quantum symmetric pairs initiated by Huanchen Bao and WW.  We thank a referee for providing the reference \cite{K93}, and WW thanks Huanchen for his insightful collaboration. 
The research of WW  and the undergraduate research of CB are partially supported by a grant from National Science Foundation. 
The senior author acknowledges the great skill of the junior author with Mathematica, which has played an important role in helping produce a large set of examples.

\section{Formulae for $\imath$-divided powers $\dvev{n}$ and $\imath$-canonical basis on $L(2\la)$}
  \label{sec:dv:even}

\subsection{The quantum $\sll_2$}

Let $\Q(q)$ be the field of rational functions in a variable $q$. Let 
\[
\A=\Z[q,q^{-1}].
\] 
Let $\mathbb N =\{0,1, 2, \ldots\}$. The $q$-integers and $q$-binomial coefficients 
are defined as follows: for $m \in \mathbb{Z}$ and $b \in \mathbb N$,
\begin{align*}
 [m] = \frac{q^{m}-q^{-m}}{q- q^{-1}},
\quad
[b]! =[1] [2] \cdots [b],
\quad
\begin{bmatrix}
m\\b
\end{bmatrix}
&=\prod_{1\leq i\leq b} \frac{q^{m-i+1}- q^{-(m-i+1)}}{q^i- q^{-i}}.
\end{align*}
We shall need the following additional notations: for $m \in \mathbb{Z}$ and $b \in \mathbb N$, 
\[
[m]_{q^2} =\frac{q^{2m}-q^{-2m}}{q^2- q^{-2}},
\qquad
\begin{bmatrix}
m\\ b
\end{bmatrix}_{q^2} =\prod_{1\leq i\leq b} \frac{q^{2(m-i+1)}- q^{-2(m-i+1)}}{q^{2i}- q^{-2i}}.
\]

Recall  the quantum group $\U=\U_q(\mathfrak{sl}_2)$ is the $\Q(q)$-algebra generated by $F, E, K, K^{-1}$, subject to the relations:  $KK^{-1}=K^{-1}K=1$, and 
\begin{align}
 \label{sl2}
EF -FE =\frac{K-K^{-1}}{q-q^{-1}},
\quad
K E =q^2 E K,
\quad
K F=q^{-2} FK.
\end{align}
Let $\UA$ be the $\A$-subalgebra of $\U$ generated by $E^{(n)}, F^{(n)},  K^{\pm 1}$ and $\prod_{i=1}^n \frac{q^{a+i} K -q^{-a-i}K^{-1} }{q^{i} -q^{-i}}$, 
for all $n \ge 1$ and $a\in \Z$. 
There is an anti-involution $\vs$ of the $\Q$-algebra $\U$:
\begin{align}
 \label{eq:vs}
\vs: \U \longrightarrow \U,
\qquad E\mapsto E, \quad F\mapsto F, \quad K\mapsto K, \quad q\mapsto q^{-1}.
\end{align}

Denote by $\Udot$ the modified quantum group of $\sll_2$ \cite{L93}, which is the $\Q(q)$-algebra generated by $E\one_\la, F\one_\la$ and the  idempotents $\one_\la$, for $\la\in \Z$, which satisfy the relations 
\begin{align*}
\one_\la \one_\mu =\delta_{\la,\mu} \one_\la, 
&\qquad
E\one_\la =\one_{\la+2} E \one_{\la} =\one_{\la+2} E,
\\
F\one_\la  =\one_{\la-2} F \one_\la=\one_{\la-2} F,
&\qquad 
EF\one_\la -FE \one_\la =[\la].
\end{align*}
Let $\UAdot$ be the $\A$-subalgebra of $\Udot$  generated by $E^{(n)} \one_\la, F^{(n)} \one_\la, \one_\la$, for all $n\ge 0$ and $\la\in \Z$.
There is a natural left action of $\U$ on $\Udot$ such that $K \one_\la =q^\la \one_\la$. 
Denote by 
\[
\UAdot_\ev =\bigoplus_{\la\in \Z} \, \UAdot \one_{2\la},
\qquad
\UAdot_\odd =\bigoplus_{\la\in \Z}\, \UAdot \one_{2\la -1}. 
\]
We have $\UAdot =\UAdot_\ev  \oplus \UAdot_\odd$. By a base change we define  $\Udot_\ev$ and $\Udot_\odd$ accordingly so that $\Udot =\Udot_\ev  \oplus \Udot_\odd$.
\subsection{The $\imath$-divided powers $\dvev{n}$} 

Set 
\begin{equation}  \label{eq:t}
\Y :=q^{-1}EK^{-1}, 
\qquad
\kk:=\frac{K^{-2}-1}{q^2-1}, \qquad \mathfrak{t}:=\Y +F.
\end{equation}
For $a\in \N$,  Bao and the second author  \cite{BW13} proposed 
the following formulae for $\imath$-divided powers  (the terminology of $\imath$-divided power is more recent and did not appear {\em loc. cit.}):
\begin{align}
\label{def:idp}
\begin{split}
\dvev{2a} & = {\mathfrak{t}(\mathfrak{t} - [-2a+2])(\mathfrak{t}-[-2a+4]) \cdots (\mathfrak{t}-[2a-4]) (\mathfrak{t} - [2a-2]) \over [2a]!},\\
\dvev{2a+1} & = {  (\mathfrak{t} - [-2a])(\mathfrak{t}-[-2a+2]) \cdots (\mathfrak{t}-[2a-2]) (\mathfrak{t} - [2a])  \over [2a+1]!}.
\end{split}
\end{align}
(Actually $\mathfrak{t}$ is replaced by $t=\Y+F+K^{-1}$ up to an involution of $\U$ in \cite[Conjecture~4.13]{BW13}; see Sections~\ref{sec:dvK:odd} below. But Bao and the second author suggested to use the same formulae in the current setting for $t$).

Note $\dvev{0}=1$ and $\dvev{1} =\mathfrak{t}$, $\dvev{2} =\mathfrak{t}^2/[2]$, and $\dvev{3} = \mathfrak{t}(\mathfrak{t}^2-[2]^2)/[3]!$. 
The $\imath$-divided powers $\dvev{n}$ satisfy and are in turn determined by the  following recursive relations: 
\begin{align}   
  \label{eq:tt}
\begin{split}
\mathfrak{t} \cdot \dvev{2a-1} &=[2a] \dvev{2a},
\\
\mathfrak{t} \cdot \dvev{2a} &=  [2a+1] \dvev{2a+1} +[2a] \dvev{2a-1}, \quad \text{ for } a\ge 1.
\end{split}
\end{align}

\subsection{The algebra $\BA$} 

Define, for $a\in \Z, n\ge 0$, 
\begin{equation}  \label{kbinom}
\qbinom{\kk;a}{n} =\prod_{i=1}^n \frac{q^{4a+4i-4} K^{-2} -1}{q^{4i} -1},
\qquad 
[\kk;a]= \qbinom{\kk;a}{1}.
\end{equation}
Note that 
$\kk=q[2]\, [\kk;0].$ 
While $\qbinom{\kk;a}{n}$ does not lie in $\UA$ in general,  
what is crucial for us is the fact that, for $\la \in \Z$, 
\begin{equation}
  \label{k=qbinom}
\qbinom{\kk;a}{n} \one_{2\la} = q^{2n(a-1-\la)} \qbinom{a-1-\la+n}{n}_{q^2} \one_{2\la} \in \UAdot_\ev.
\end{equation}
It follows from \eqref{sl2} that, for $n\ge 0$ and $a\in \Z$,  
\begin{align}
  \label{FEk}
F \Y  -q^{-2} \Y  F =\kk,
\qquad
\qbinom{\kk;a}{n} F = F \qbinom{\kk;a+1}{n},
\qquad
\qbinom{\kk;a}{n} \Y  =\Y  \qbinom{\kk;a-1}{n}.
\end{align}


Let 
\[
\Y^{(n)} =\Y^n/[n]!, \quad
F^{(n)} =F^n/[n]!,\quad \text{ for } n\ge 1.
\] 
It is understood that $\Y^{(n)}=0$ for $n<0$ and $\Y^{(0)}=1$.
Denote by $\B$ the $\Q(q)$-subalgebra of $\U$ generated by $\Y, F$ and $K^{-1}$.  
Denote by $\BA$ the $\A$-subalgebra of $\B$ with $1$ generated by $\Y^{(n)}, F^{(n)}, K^{-1}$ and $\qbinom{\kk;a}{n}$, 
for all $n \ge 1$ and $a\in \Z$. 

\begin{lem}
  \label{dpY}
For $n\in \N$, we have 
\[
\Y^{(n)}= q^{-n^2} E^{(n)} K^{-n}.
\]
\end{lem}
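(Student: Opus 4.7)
The plan is to prove the formula by a direct computation, essentially just moving all the $K^{-1}$ factors in $\Y^n$ past all the $E$ factors and collecting the resulting powers of $q$. Since $\Y^{(n)} = \Y^n / [n]!$ by definition (for $n \ge 1$), it suffices to show $\Y^n = q^{-n^2} E^n K^{-n}$, after which dividing both sides by $[n]!$ yields the stated identity. The case $n=0$ is immediate.

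The key ingredient is the commutation relation from \eqref{sl2}, which rearranges to
\[
K^{-1} E = q^{-2} E K^{-1}.
\]
Writing $\Y^n = (q^{-1} E K^{-1})^n = q^{-n} \, E K^{-1} E K^{-1} \cdots E K^{-1}$ and shuttling each $K^{-1}$ to the right, the $k$-th occurrence of $K^{-1}$ (counted from the left, $k = 1, \ldots, n$) crosses $n-k$ copies of $E$, contributing a factor $q^{-2(n-k)}$. Summing the exponents gives
\[
\Y^n = q^{-n} \cdot q^{-2 \sum_{k=1}^{n}(n-k)} \, E^n K^{-n} = q^{-n - n(n-1)} E^n K^{-n} = q^{-n^2} E^n K^{-n}.
\]
Alternatively, one may phrase the same calculation as an induction on $n$, using $\Y^{n+1} = \Y \cdot \Y^n = q^{-1} E K^{-1} \cdot q^{-n^2} E^n K^{-n}$ and commuting the single $K^{-1}$ past $E^n$ to pick up $q^{-2n}$, yielding the exponent $-1 - n^2 - 2n - n = -(n+1)^2$.

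There is no real obstacle here; the only thing to be careful about is bookkeeping of the exponent of $q$ to confirm that $-n - 2\binom{n}{2} = -n^2$. Since this lemma is used simply to translate between the $\Y$-generator (which is natural for the algebra $\B$ of \eqref{eq:t}) and Lusztig's divided powers $E^{(n)}$ in $\UA$, no further input is needed.
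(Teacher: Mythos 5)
Your proof is correct, and it takes essentially the same route as the paper, which simply notes that the identity follows by induction on $n$ using the relation $K^{-1}E = q^{-2}EK^{-1}$ from \eqref{sl2} and the definition $\Y = q^{-1}EK^{-1}$ from \eqref{eq:t}. Your exponent bookkeeping $-n - 2\binom{n}{2} = -n^2$ checks out, and the inductive variant you sketch is precisely the paper's argument.
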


\begin{proof}
Follows by induction on $n$, using \eqref{sl2} and \eqref{eq:t}.
\end{proof}

\begin{lem}
  \label{lem:anti}
The anti-involution $\vs$ on $\U$ restricts to an anti-involution of the $\Q$-algebra $\B$ sending 
\[
F\mapsto F,\quad \Y \mapsto \Y, \quad  K^{-1} \mapsto K^{-1},  \quad  q \mapsto q^{-1}.
\] 
Moreover,
$\vs$ sends 
\[
\kk\mapsto -q^{2} \kk, 
\quad
\dvev{n} \mapsto \dvev{n}, 
\quad
\qbinom{\kk;a}{n}\mapsto (-1)^n q^{2n(n+1)} \qbinom{\kk;1-a-n}{n}, \; \forall a\in\Z, n\in\N.
\] 
\end{lem}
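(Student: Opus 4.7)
The plan is to check the lemma directly from the definitions; the main work is bookkeeping with indices for the $q$-binomial identity, while the other claims are short.

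First I would verify $\vs(\Y) = \Y$. Since $\vs$ is an anti-involution,
\[
\vs(\Y) = \vs(q^{-1} E K^{-1}) = q \cdot K^{-1} E = q \cdot q^{-2} E K^{-1} = q^{-1} E K^{-1} = \Y,
\]
using the relation $KE = q^{2} EK$. Because $\vs$ fixes each of the generators $\Y, F, K^{-1}$ of $\B$ and is an anti-algebra map of $\U$, it automatically restricts to an anti-involution of $\B$.

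For $\vs(\kk)$, I would compute directly from \eqref{eq:t}: $\vs$ fixes $K^{-2}$ and sends $q^{2}-1$ to $q^{-2}-1 = -q^{-2}(q^{2}-1)$, giving $\vs(\kk) = -q^{2}\kk$. For the identity $\vs(\dvev{n}) = \dvev{n}$, observe first that $\vs(\mathfrak{t}) = \mathfrak{t}$ and that $[m]$ is invariant under $q \mapsto q^{-1}$ for every $m \in \Z$, so $\vs$ fixes each factor $(\mathfrak{t} - [m])$ in the product formula \eqref{def:idp}. As $\vs$ is an anti-homomorphism it would reverse the order of the factors, but all the factors are polynomials in the single element $\mathfrak{t}$ and hence mutually commute; so the reversed product equals the original one, yielding $\vs(\dvev{n}) = \dvev{n}$.

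The only computation that takes some care is the last one. Applying $\vs$ to \eqref{kbinom} and using that $K^{-2}$ is $\vs$-fixed, I would get
\[
\vs\!\left(\qbinom{\kk;a}{n}\right)
= \prod_{i=1}^{n} \frac{q^{-4a-4i+4} K^{-2} - 1}{q^{-4i} - 1}.
\]
Now I would extract the scalar $-q^{4i}$ from each factor of the denominator via $q^{-4i}-1 = -q^{-4i}(q^{4i}-1)$, producing an overall prefactor
\[
\prod_{i=1}^{n}(-q^{4i}) = (-1)^{n} q^{2n(n+1)}.
\]
For the remaining numerators I would reindex by $j = n - i + 1$, turning $q^{-4a-4i+4}$ into $q^{-4a-4n+4j} = q^{4(1-a-n)+4j-4}$; again all factors commute, so the reindexed product is exactly $\qbinom{\kk;1-a-n}{n}$, giving the stated formula.

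The only mild obstacle is this last index manipulation — making sure that the exponent shifts and the sign/power-of-$q$ extracted from the denominators agree with the asserted $(-1)^n q^{2n(n+1)}$ and that the resulting product correctly matches the definition of $\qbinom{\kk;1-a-n}{n}$. Everything else is immediate from the anti-homomorphism property and the symmetry of $[m]$ under $q \mapsto q^{-1}$.
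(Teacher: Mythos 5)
Your proof is correct and follows essentially the same route as the paper's: verify $\vs(\Y)=\Y$ from the relation $KE=q^2EK$, note the generators of $\B$ are fixed, and establish the $\qbinom{\kk;a}{n}$ identity by extracting $(-q^{4i})$ from each denominator and reindexing the numerators. Your explicit justification of $\vs(\dvev{n})=\dvev{n}$ via the commuting factors $(\mathfrak{t}-[m])$ and the $q\mapsto q^{-1}$ invariance of $[m]$ is a correct filling-in of what the paper dismisses as ``straightforward to verify.''
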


\begin{proof}
Using \eqref{eq:vs} we compute
$\vs(\Y) =\vs(q^{-1}EK^{-1}) =qK^{-1}E =\Y.$
Hence $\vs$ restricts to an anti-involution of the $\Q$-algebra $\B$.
It is straightforward to verify that $\vs(\kk) =-q^{2} \kk$, $\vs(\dvev{n}) = \dvev{n}$.
Finally, we have
\begin{align*}
\vs\left( \qbinom{\kk;a}{n}\right)
&= \prod_{i=1}^n \frac{q^{-(4a+4i-4)} K^{-2} -1}{q^{-4i} -1}
\\
&= \prod_{i=1}^n (-q^{4i}) \prod_{i=1}^n \frac{q^{ 4(1-a-n) +4i-4} K^{-2} -1}{q^{4i} -1}
=  (-1)^n q^{2n(n+1)} \qbinom{\kk;1-a-n}{n}.
\end{align*}
This proves the lemma. 
\end{proof}

\begin{rem}
An involution $\varpi$ on the $\Q$-algebra $\U$,
\[
\varpi: E \mapsto q^{-1}FK, \quad F\mapsto q^{-1}EK^{-1},
\quad K^{-1} \mapsto K^{-1},
\quad q\mapsto q^{-1}, 
\]
restricts to an involution $\varpi$ on the $\Q$-algebra $\B$, switching $\Y \leftrightarrow F.$ 
We can use $\varpi$ in place of the anti-involution $\vs$ for applications below. 

There is yet another anti-involution of the $\Q(q)$-algebra $\U$ (and $\B$) which fixes
$K^{-1}$ and switches $F$ and $\Y.$ We do not need this anti-involution below.
\end{rem}

\begin{lem}
The following formula holds for $n\ge 0$:  
\begin{align}   \label{FYn}
F \Y^{(n)} &=q^{-2n} \Y^{(n)} F +  \Y^{(n-1)} \frac{q^{3-3n} K^{-2} - q^{1-n} }{q^2-1}.
\end{align} 
\end{lem}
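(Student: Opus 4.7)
The plan is to prove the formula by induction on $n$. The case $n=0$ is trivial with the convention $\Y^{(-1)}=0$, and for $n=1$ the identity reduces immediately to the commutation relation $F\Y - q^{-2}\Y F = \kk$ from \eqref{FEk}, since $\frac{q^{0}K^{-2}-q^{0}}{q^2-1}=\kk$.

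For the inductive step, I would write $\Y^{(n+1)} = \frac{1}{[n+1]}\Y^{(n)}\Y$ and apply the inductive hypothesis to $F\Y^{(n)}$. The resulting expression contains two pieces that must be reorganized. The first, $\Y^{(n)} F\Y$, I would handle using $F\Y = q^{-2}\Y F + \kk$ together with $\Y^{(n)}\Y = [n+1]\Y^{(n+1)}$. The second, $\Y^{(n-1)} \alpha_n \Y$ with $\alpha_n := \frac{q^{3-3n}K^{-2}-q^{1-n}}{q^2-1}$, I would handle by first commuting $\Y$ past $\alpha_n$ using $K^{-2}\Y = q^{-4}\Y K^{-2}$ (a direct consequence of \eqref{sl2} and $\Y = q^{-1}EK^{-1}$), and then using $\Y^{(n-1)}\Y = [n]\Y^{(n)}$. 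Collecting everything gives
\[
F\Y^{(n+1)} = q^{-2(n+1)}\Y^{(n+1)} F + \frac{1}{[n+1]}\,\Y^{(n)}\bigl(q^{-2n}\kk + [n]\alpha_n'\bigr),
\]
where $\alpha_n' := \frac{q^{-1-3n}K^{-2}-q^{1-n}}{q^2-1}$ is the shifted version of $\alpha_n$.

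It then remains to verify the scalar identity $q^{-2n}\kk + [n]\alpha_n' = [n+1]\alpha_{n+1}$. Comparing the coefficient of $K^{-2}$ and the constant term separately (both divided by $q^2-1$), this reduces to the twin $q$-number identities $q^{n}+[n]q^{-1}=[n+1]$ and $q^{-n}+[n]q=[n+1]$, each of which follows in one line from the definition $[n]=(q^n-q^{-n})/(q-q^{-1})$.

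The only real obstacle is bookkeeping: one must correctly track the many $q$-power shifts produced by commuting $\Y$ past $F$, $\kk$, and $K^{-2}$. There is no conceptual difficulty, so I would sanity-check the computation on the case $n=1 \to n=2$ before writing out the general inductive step, to catch any sign or exponent error that might otherwise propagate.
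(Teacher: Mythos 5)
Your proposal is correct and is essentially the paper's own argument: the paper also proceeds by induction on $n$, using the base case $F\Y - q^{-2}\Y F = \kk$, the commutation $K^{-2}\Y = q^{-4}\Y K^{-2}$ to shift the middle factor past $\Y$, and the same pair of $q$-integer identities $q^{n}+[n]q^{-1}=[n+1]=q^{-n}+[n]q$ to collect terms. The only cosmetic difference is that the paper first multiplies through by $[n]!$ and runs the induction on the equivalent statement for $F\Y^{n}$, whereas you carry the $\tfrac{1}{[n+1]}$ factors along with the divided powers.
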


\begin{proof}
We shall prove the following equivalent formula  by induction on $n$:
\[
F\Y^n = q^{-2n} \Y^n F + (q^2-1)^{-1} [n] \Y^{n-1} \big( q^{3-3n} K^{-2} - q^{1-n}  \big).
\]
The base case when $n=1$ is covered by \eqref{FEk}. 
Assume the formula is proved for $F\Y^n$.
Then by inductive assumption we have
\begin{align*}
F\Y^{n+1} &=F\Y^n \Y = q^{-2n} \Y^n F\Y + (q^2-1)^{-1} [n] \Y^{n-1} \big( q^{3-3n} K^{-2} - q^{1-n}  \big) \Y \\
&=q^{-2n} \Y^n (q^{-2} \Y F + (q^2-1)^{-1} \big( K^{-2} - 1\big)  ) + (q^2-1)^{-1} [n] \Y^{n} \big( q^{-1-3n} K^{-2} - q^{1-n}  \big) \\
&=q^{-2(n+1)} \Y^{n+1} F + (q^2-1)^{-1} [n+1 ] \Y^n \big( q^{-3n} K^{-2} - q^{-n}\big). 
\end{align*}
The lemma is proved. 
\end{proof}
\subsection{The $\Y \kk F$-formula for $\dvev{n}$} 

Recall $\qbinom{\kk;a}{n}$ from \eqref{kbinom}. 

\begin{thm}  \label{thm:iDP}
For $m\ge 1$, we have
\begin{align}
\dvev{2m} &= \sum_{c=0}^m \sum_{a=0}^{2m-2c} q^{\binom{2c}{2} -a(2m-2c-a)} \Y^{(a)}  \qbinom{\kk;1-m}{c}  F^{(2m-2c-a)}, 
\label{t2m}
\\
\dvev{2m-1} &=  \sum_{c=0}^{m-1} \sum_{a=0}^{2m-1-2c} q^{\binom{2c+1}{2} -a(2m-1-2c-a)} \Y^{(a)}  \qbinom{\kk; 1-m}{c}  F^{(2m-1-2c-a)}. 
\label{t2m-1}
\end{align}
\end{thm}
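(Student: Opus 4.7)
The plan is to prove \eqref{t2m} and \eqref{t2m-1} together by induction on $n$, using the recursive relations \eqref{eq:tt} that uniquely determine the $\imath$-divided powers $\dvev{n}$. The base case $n=1$ is immediate since $\dvev{1}=\mathfrak{t}=\Y+F=\Y^{(1)}+F^{(1)}$, which is \eqref{t2m-1} with $m=1$. For the inductive step there are two sub-steps: passing from \eqref{t2m-1} at index $m$ to \eqref{t2m} at the same $m$ via $[2m]\dvev{2m}=\mathfrak{t}\dvev{2m-1}$, and then passing to \eqref{t2m-1} at $m+1$ via $[2m+1]\dvev{2m+1}=\mathfrak{t}\dvev{2m}-[2m]\dvev{2m-1}$.

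In each sub-step the key computation is to expand $\mathfrak{t}\cdot\Y^{(a)}\qbinom{\kk;1-m}{c}F^{(b)}=(\Y+F)\Y^{(a)}\qbinom{\kk;1-m}{c}F^{(b)}$ back into the $\Y$–$\kk$–$F$ ordered shape. The $\Y$-half is easy: $\Y\cdot\Y^{(a)}=[a+1]\Y^{(a+1)}$, which commutes through $\qbinom{\kk;1-m}{c}$ untouched and shifts $a\mapsto a+1$. The $F$-half requires first commuting $F$ past $\Y^{(a)}$ using \eqref{FYn}, producing a ``principal'' term $q^{-2a}\Y^{(a)}F$ plus a ``correction'' term $\Y^{(a-1)}\tfrac{q^{3-3a}K^{-2}-q^{1-a}}{q^2-1}$, then commuting $F$ past $\qbinom{\kk;1-m}{c}$ via \eqref{FEk} as $F\qbinom{\kk;1-m}{c}=\qbinom{\kk;-m}{c}F$, and finally absorbing $F\cdot F^{(b)}=[b+1]F^{(b+1)}$. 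The correction term is rewritten by splitting $K^{-2}=1+(q^2-1)\kk$ and recalling $\kk=q[2]\,[\kk;0]$, so that the $K^{-2}$ contribution becomes a scalar plus a factor of $[\kk;0]=\qbinom{\kk;0}{1}$ sitting next to $\qbinom{\kk;1-m}{c}$.

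After these manipulations every summand is of the form $\Y^{(a')}\qbinom{\kk;-m}{c}F^{(b')}$ or $\Y^{(a')}[\kk;0]\qbinom{\kk;1-m}{c}F^{(b')}$, with known $q$-powers and $q$-integer coefficients. Collecting terms by the triple $(a',c',b')$ appearing in the target formula \eqref{t2m} (or \eqref{t2m-1}), and then dividing by $[2m]$ (or $[2m+1]$), reduces the verification to a combinatorial identity among $q$-binomial coefficients of the shape $\qbinom{\kk;1-m}{c'}$. This identity can be checked by specializing onto each weight space $\one_{2\la}$ using \eqref{k=qbinom}, which turns it into a standard $q^2$-Pascal-type relation among ordinary $q^2$-binomial coefficients.

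The main technical obstacle is precisely this last bookkeeping step. Three different sources contribute to the coefficient of a given monomial $\Y^{(a')}\qbinom{\kk;1-m}{c'}F^{(b')}$: the $\Y$-multiplication (with shift $a\to a+1$, $c\to c'$, $b\to b'$), the principal $F$-term (with $a\to a'$, $b\to b+1=b'$, together with the shift $\qbinom{\kk;-m}{c'}\to$ combinations of $\qbinom{\kk;1-m}{c'}$ and $\qbinom{\kk;1-m}{c'-1}$), and the correction $F$-term (with $a\to a'$ and a factor of $[\kk;0]$ fusing into $\qbinom{\kk;1-m}{c'}$). The interplay of the $q$-exponents $\binom{2c}{2}$, $-a(2m-2c-a)$, the scalars $q^{-2a}$ and $q^{1-a},q^{3-3a}$ from \eqref{FYn}, and the $q^2$-Pascal coefficients is the source of the asserted integrality and positivity, and checking that these combine cleanly — in particular that the boundary terms at $a'=0$ and $a'=2m-2c'$ respect the summation range — is the core calculation of the induction, which is elementary but lengthy.
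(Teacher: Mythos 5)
Your proposal follows essentially the same route as the paper's proof in \S\ref{sec:proof:iDP}: induction via the two recursions in \eqref{eq:tt}, expansion of $\mathfrak{t}\cdot \Y^{(a)}\qbinom{\kk;1-m}{c}F^{(b)}$ using \eqref{FYn} and \eqref{FEk}, and collection of the resulting coefficients $f_{a,c}(\kk)$ into $[2m]\qbinom{\kk;1-m}{c}$ (resp.\ $[2m+1]\qbinom{\kk;-m}{c}$) times the expected $q$-power. The only difference is cosmetic: where you propose to split $K^{-2}=1+(q^2-1)\kk$ and/or specialize onto weight spaces $\one_{2\la}$ to verify the final coefficient identity, the paper verifies it directly as an identity of polynomials linear in $K^{-2}$, which amounts to the same bookkeeping.
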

The proof of Theorem~\ref{thm:iDP} will be given in \S\ref{sec:proof:iDP}. 
It follows from Theorem~\ref{thm:iDP} that $\dvev{n} \in \BA$ for all $n$.

For $n\in \N$, we denote
\begin{align}  \label{eq:divB}
b^{(n)}  = \sum_{a=0}^n  q^{-a(n-a)}  \Y^{(a)}F^{(n-a)}.
\end{align}
Note  that the summation over $a$  for $c=0$ in \eqref{t2m}--\eqref{t2m-1} is equal to $b^{(2m)}$ and $b^{(2m-1)}$, respectively. 

\begin{example}
We have the following examples of $\dvev{n}$, for $2\le n\le 5$:
\begin{align*}
\dvev{2} &= b^{(2)} + q [\kk;0],
\\
\dvev{3}  &   = b^{(3)} + q^3[\kk;-1]F+ q^3 \Y [\kk;-1],
\\
\dvev{4} &   = b^{(4)}  +  q\Y^{(2)}  [\kk;-1] +  q  [\kk;-1] F^{(2)} + \Y [\kk; -1] F + q^{6}  \qbinom{\kk;-1}{2},
\\ 
\dvev{5}  &   = b^{(5)}   + q^3 \Y^{(3)} [\kk;-2]    + q^3 [\kk;-2]F^{(3)} + q \Y^{(2)}[\kk;-2]F  + q \Y [\kk;-2]F^{(2)} \\
&\qquad + q^{10} \Y \qbinom{\kk;-2}{2}  + q^{10} \qbinom{\kk;-2}{2}   F. 
\end{align*}
\end{example}

\subsection{Some reformulations}

Let us reformulate the formulae in Theorem~\ref{thm:iDP} in two different forms.

First we have the following $Fk\Y$-expansion formulae, which can be obtained 
from the formulae in Theorem~\ref{thm:iDP} by applying the anti-involution $\vs$.
\begin{prop}  \label{iDP:FkY}
For $m\ge 1$, we have
\begin{align*}
\dvev{2m} &= \sum_{c=0}^m \sum_{a=0}^{2m-2c} (-1)^c q^{3c+ a(2m-2c-a)} F^{(a)}  \qbinom{\kk;m-c}{c}  \Y^{(2m-2c-a)}, 
\\
\dvev{2m-1} &=  \sum_{c=0}^{m-1} \sum_{a=0}^{2m-1-2c} (-1)^c q^{c+ a(2m-1-2c-a)} F^{(a)}  \qbinom{\kk; m-c}{c}  \Y^{(2m-1-2c-a)}. 
\end{align*}
\end{prop}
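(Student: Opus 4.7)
The plan is to apply the anti-involution $\vs$ from \eqref{eq:vs} (restricted to $\mbf V$ as in Lemma~\ref{lem:anti}) to both sides of the identities in Theorem~\ref{thm:iDP} and then reindex. Since $\dvev{n} \in \mbf V$ and $\vs(\dvev{n}) = \dvev{n}$, the left-hand sides are unchanged. On the right-hand sides, $\vs$ reverses products and replaces $q$ by $q^{-1}$; it fixes each divided power $\Y^{(a)}$ and $F^{(a)}$ since $\vs(\Y) = \Y$, $\vs(F) = F$, and $\vs$ sends $[a]! \mapsto [a]!$ only up to the substitution $q\mapsto q^{-1}$, which fixes the bar-symmetric polynomial $[a]!$.

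The central input is the identity from Lemma~\ref{lem:anti}:
\[
\vs\!\left( \qbinom{\kk;a}{c} \right) = (-1)^c q^{2c(c+1)} \qbinom{\kk;1-a-c}{c}.
\]
Applying $\vs$ to \eqref{t2m} and specializing at $a = 1-m$ gives
\[
\vs\!\left( \qbinom{\kk;1-m}{c} \right) = (-1)^c q^{2c(c+1)} \qbinom{\kk;m-c}{c},
\]
while $\vs$ of the scalar $q^{\binom{2c}{2} - a(2m-2c-a)}$ becomes $q^{-\binom{2c}{2} + a(2m-2c-a)}$. After reversing the order of the three tensor factors and substituting $a' = 2m-2c-a$, the formula takes the shape of Proposition~\ref{iDP:FkY} with the overall $q$-exponent
\[
-\binom{2c}{2} + 2c(c+1) + a'(2m-2c-a').
\]
The only nontrivial check is arithmetic: $-\binom{2c}{2} + 2c(c+1) = -c(2c-1) + 2c^2 + 2c = 3c$, which matches the claimed exponent $3c + a'(2m-2c-a')$. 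The corresponding computation for \eqref{t2m-1} uses $-\binom{2c+1}{2} + 2c(c+1) = c$, again matching the claimed exponent $c + a'(2m-1-2c-a')$.

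The only potential obstacle is a bookkeeping slip in the $q$-exponents or a sign in the transformation of $\qbinom{\kk;a}{c}$ under $\vs$, but since both identities reduce to the two one-line exponent computations above, and since Lemma~\ref{lem:anti} has already been established, no further input is needed. Thus the proposition follows directly from Theorem~\ref{thm:iDP} and Lemma~\ref{lem:anti}.
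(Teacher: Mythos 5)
Your proposal is correct and follows essentially the same route as the paper: apply the anti-involution $\vs$ to Theorem~\ref{thm:iDP}, use $\vs(\dvev{n})=\dvev{n}$ and the transformation rule for $\qbinom{\kk;a}{c}$ from Lemma~\ref{lem:anti}, then reindex $a\mapsto 2m-2c-a$; your exponent checks $-\binom{2c}{2}+2c(c+1)=3c$ and $-\binom{2c+1}{2}+2c(c+1)=c$ are exactly the arithmetic the paper's proof performs.
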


\begin{proof}
Let us derive the first formula only from Theorem~\ref{thm:iDP}, and skip the entirely similar proof of the second one.
We apply the anti-involution $\vs$ to both sides of \eqref{t2m}. 
The LHS gives us $\vs (\dvev{2m}) =\dvev{2m}$  by Lemma~\ref{lem:anti}.
By using Lemma~\ref{lem:anti} again the RHS gives us 
\begin{align*}
\vs \big(\text{RHS }\eqref{t2m} \big)
& =\sum_{c=0}^m \sum_{a=0}^{2m-2c} q^{-\binom{2c}{2} +a(2m-2c-a)}   F^{(2m-2c-a)}   
(-1)^c q^{2c(c+1)} \qbinom{\kk;m-c}{c}  \Y^{(a)}  
\\
&= \sum_{c=0}^m \sum_{a=0}^{2m-2c} (-1)^c q^{3c+ a(2m-2c-a)}   F^{(2m-2c-a)}  \qbinom{\kk;m-c}{c}  \Y^{(a)}.
\end{align*}
By a change of variable $a \mapsto 2m-2c-a$, the RHS of the above equation is equal to the one given in the proposition. 
\end{proof}

Next, we have the following modified reformulation of Theorem~\ref{thm:iDP}.

\begin{prop}
  \label{prop:iDPdot}
For $m\ge 1$ and $\la \in \Z$, we have
\begin{align}
\dvev{2m} \one_{2\la}
&= \sum_{c=0}^m \sum_{a=0}^{2m-2c} q^{2(a+c)(m-a-\la)-2ac-\binom{2c+1}{2}} \qbinom{m-c-a-\la}{c}_{q^2}
 E^{(a)}  F^{(2m-2c-a)}\one_{2\la}, 
\label{t2mdot}
\\
\dvev{2m-1} \one_{2\la}
&=  \sum_{c=0}^{m-1} \sum_{a=0}^{2m-1-2c}  
\label{t2m-1dot}\\
&  
\quad q^{2(a+c)(m-a-\la)-2ac-a-\binom{2c+1}{2}}  \qbinom{m-c-a-\la-1}{c}_{q^2}  E^{(a)}  F^{(2m-1-2c-a)}\one_{2\la}. 
\notag
\end{align}
In particular, we have $\dvev{n}\one_{2\la} \in \UAdot_\ev$, for all $n\in \N$. 
\end{prop}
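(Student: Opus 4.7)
The plan is to apply both identities of Theorem~\ref{thm:iDP} directly to $\one_{2\la}$ and rewrite each summand using Lemma~\ref{dpY} together with the evaluation formula \eqref{k=qbinom}. No new algebraic identity is needed: the entire content of the proposition is bookkeeping of $q$-exponents once the two ``passage past $\one_{2\la}$'' moves are made.

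In detail, fix a summand $\Y^{(a)}\,\qbinom{\kk;1-m}{c}\,F^{(n)}\one_{2\la}$ from \eqref{t2m}, where $n=2m-2c-a$. First I would push $\one_{2\la}$ leftward past $F^{(n)}$, which simply records the weight shift $F^{(n)}\one_{2\la}=\one_{2\la-2n}F^{(n)}$. Next, \eqref{k=qbinom} (applied with the shifted weight $2(\la-n)$ and the parameter $1-m$ in place of $a$) gives
\[
\qbinom{\kk;1-m}{c}\one_{2(\la-n)} = q^{2c(-m-\la+n)}\,\qbinom{-m-\la+n+c}{c}_{q^2}\,\one_{2(\la-n)},
\]
and with $n=2m-2c-a$ the index of the $q^2$-binomial becomes exactly $m-c-a-\la$. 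Finally, Lemma~\ref{dpY} rewrites $\Y^{(a)}=q^{-a^2}E^{(a)}K^{-a}$, and the natural action $K^{-a}\one_{2(\la-n)}=q^{-2a(\la-n)}\one_{2(\la-n)}$ converts the rightmost factor into $q^{-a^2-2a(\la-n)}\,E^{(a)}\one_{2(\la-n)}$. Collecting these scalars with the original prefactor $q^{\binom{2c}{2}-a(2m-2c-a)}$ and simplifying yields the asserted exponent $2(a+c)(m-a-\la)-2ac-\binom{2c+1}{2}$, confirming \eqref{t2mdot}. The derivation of \eqref{t2m-1dot} from \eqref{t2m-1} proceeds identically with $n=2m-1-2c-a$; the only effect is the extra $-a$ appearing in the exponent and the shift $\la\mapsto \la+1$ inside the $q^2$-binomial.

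The last assertion $\dvev{n}\one_{2\la}\in\UAdot_\ev$ is then immediate: each coefficient appearing in \eqref{t2mdot}--\eqref{t2m-1dot} is a product of a monomial in $q^{\pm 1}$ and a $q^2$-binomial coefficient, hence lies in $\A$, while $E^{(a)}F^{(b)}\one_{2\la}\in\UAdot_\ev$ by the very definition of $\UAdot$.

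The main obstacle is purely arithmetic: verifying that the sum of the four $q$-exponents contributed by (a) the original Theorem~\ref{thm:iDP} prefactor, (b) \eqref{k=qbinom}, (c) $K^{-a}\one_{2(\la-n)}$, and (d) the $q^{-a^2}$ from Lemma~\ref{dpY}, collapses to the clean expression $2(a+c)(m-a-\la)-2ac-\binom{2c+1}{2}$ (and its odd analog). I would carry this out once in the even case and indicate that the odd case is analogous, in order to keep the exposition short.
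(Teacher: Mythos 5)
Your proposal is correct and follows essentially the same route as the paper's own proof: push $\one_{2\la}$ left past $F^{(2m-2c-a)}$, evaluate $\qbinom{\kk;1-m}{c}$ on the shifted weight space via \eqref{k=qbinom}, rewrite $\Y^{(a)}$ via Lemma~\ref{dpY}, and collect the $q$-exponents. The exponent bookkeeping you outline does collapse to $2(a+c)(m-a-\la)-2ac-\binom{2c+1}{2}$ (and its odd analogue), exactly as in the paper.
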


\begin{proof}
Note $F^{(2m-2c-a)}\one_{2\la} = \one_{2(\la-2m+2c+a)} F^{(2m-2c-a)}$; cf. \eqref{t2m}. Using \eqref{k=qbinom} and the formula $\Y^{(a)}= q^{-a^2} E^{(a)} K^{-a}$ from Lemma~\ref{dpY}, we compute
\begin{align*}
&q^{\binom{2c}{2} -a(2m-2c-a)} \Y^{(a)} \qbinom{\kk;1-m}{c}  \one_{2(\la-2m+2c+a)}
\\
&= q^{\binom{2c}{2} -a(2m-2c-a)}  q^{-a^2 -2a (\la -2m +2c +a)} q^{2c(m-2c-a-\la)} \qbinom{m-c-a-\la}{c}_{q^2} E^{(a)} \one_{2(\la-2m+2c+a)}
 \\
&= q^{2(a+c)(m-a-\la)-2ac-\binom{2c+1}{2}} \qbinom{m-c-a-\la}{c}_{q^2} E^{(a)} \one_{2(\la-2m+2c+a)}.
\end{align*}
This proves \eqref{t2mdot}.

Similarly, \eqref{t2m-1dot} follows from \eqref{t2m-1} and \eqref{k=qbinom}
by the following computation:
\begin{align*}
&q^{\binom{2c+1}{2} -a(2m-1-2c-a)}  \Y^{(a)} \qbinom{\kk;1-m}{c}  \one_{2(\la-2m+2c+a+1)}
\\
&= q^{\binom{2c+1}{2} -a(2m-1-2c-a)}  q^{-a^2 -2a (\la -2m +2c +a+1)}  
\cdot
\\
&\qquad\qquad\qquad q^{2c(m-2c-a-\la-1)} \qbinom{m-c-a-\la-1}{c}_{q^2} E^{(a)}  \one_{2(\la-2m+2c+a+1)}
 \\
&= q^{2(a+c)(m-a-\la)-2ac-a-\binom{2c+1}{2}}  \qbinom{m-c-a-\la-1}{c}_{q^2} E^{(a)}  \one_{2(\la-2m+2c+a+1)}.
\end{align*}
The proposition is proved. 
\end{proof}

\begin{rem}
Using \cite[23.1.3]{L93} one can rewrite explicitly each summand in the formulae 
\eqref{t2mdot}--\eqref{t2m-1dot} as a $\N[q,q^{-1}]$-linear combination of 
the canonical basis of $\UAdot$  \cite[25.3]{L93} 
(the coefficients are $q$-binomial coefficients up to suitable $q$-powers).
It follows that $\dvev{n} \one_{2\la}$ has a positive integral (i.e. $\N[q,q^{-1}]$-)expansion with respect to the canonical basis of $\UAdot$.  The same remark applies to other 
$\imath$-divided powers below.
\end{rem}

\subsection{The $\imath$-canonical basis for simple $\U$-modules $L(2\la)$}
  \label{sec:iCB:ev}
  
Denote by $L(\mu)$ the  
simple $\U$-module of highest weight $\mu \in \N$, with highest weight vector $v^+_\mu$.
Then $L(\mu)$ admits a canonical basis $\{ F^{(a)} v^+_\mu\mid 0\le a \le \mu\}$. 
Following \cite{BW13,BW16}, there exists a new bar involution $\psi_\imath$ on $L(\mu)$, which, in our current rank one setting, can be defined simply by requiring $\dvev{n} v^+_\mu$ to be $\psi_\imath$-invariant for all $n$. 
As a very special case of the general results in \cite[Corollary~F]{BW16} (also cf. \cite{BW13}), we know that
 the {\em $\imath$-canonical basis} $\{b^{\mu}_{a} \}_{0\le a \le \mu}$ of $L(\mu)$ exists and is characterized by the following 2 properties: 

(1) $b^{\mu}_{a}$ is $\psi_\imath$-invariant; \qquad 

(2) $b^{\mu}_{a} \in F^{(a)} v^+_\mu + \sum_{0\le r\le \mu} q^{-1} \Z[q^{-1}] F^{(r)} v^+_\mu$. 

\noindent (It is proved that the summation in (2) only runs over $0\le r <a$.)

 For $c\in \N, m\in\Z$, we define
\begin{align}   \label{cbinom}
  \cbinom{m}{c} &= \prod_{i=1}^c \frac{q^{4(m+i-1)} -1}{q^{-4i}-1},
  \qquad \cbinom{m}{0}=1.
\end{align}
 Note $\cbinom{m}{c} \in \N[q,q^{-1}]$ is $q^2$-binomial coefficients up to some $q$-powers.

We shall establish the following basic result, similar to \cite[Conjecture~4.13]{BW13}, which was also conjectured by Bao and the second author (in private).
 \begin{thm}  \label{thm:iCB:ev}
   $\quad$
\begin{enumerate}
\item
For each $\la \in \N$, the set $\{\dvev{n} \vev \mid 0\le n \le 2\la \}$ forms the $\imath$-canonical basis for $L(2\la)$. 
 Moreover, $\dvev{n} \vev =0$ for $n>2\la$.
 \item
The set $\{\dvev{n} \mid n \in \N \}$ forms the $\imath$-canonical basis for ${\U}^\imath$. 
\end{enumerate}
 \end{thm}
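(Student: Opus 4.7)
The plan is to use the $Fk\Y$-expansion from Proposition~\ref{prop:iDPdot} to compute $\dvev{n} v^+_{2\la}$ explicitly in the basis $\{F^{(r)} v^+_{2\la}\}$ of $L(2\la)$, and then verify the defining properties of the $\imath$-canonical basis recalled in \S\ref{sec:iCB:ev}. The bar-invariance of $\dvev{n} v^+_{2\la}$ under $\psi_\imath$ is built into the definition of $\psi_\imath$, so the entire argument reduces to a $q$-degree bound.

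First I would apply Proposition~\ref{prop:iDPdot} to $v^+_{2\la}$, using the classical identity $E^{(a)} F^{(b)} v^+_{2\la} = \qbinom{2\la-b+a}{a} F^{(b-a)} v^+_{2\la}$, and reindex the double sum by the surviving power $r$ of $F$. Each coefficient of $F^{(r)} v^+_{2\la}$ then becomes a single sum over $a$ of one $q^2$-binomial and one $q$-binomial weighted by a $q$-power; a routine algebraic simplification collapses the combined exponent into a clean form, roughly $(m-p)(2p-2\la-1) + 3a - a^2$ for $\dvev{2m}$ (at $r = 2p$), and an analogous expression $-s(2k{+}1) + 2(s-a)k + a(2k{+}1-a)$ for $\dvev{2m-1}$ (with $s = m-1-p$, $k = \la-p-1$).

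Second I would extract the sharp bounds. (a) For $0 \le n \le 2\la$: the $a=c=0$ term forces the leading coefficient of $F^{(n)} v^+_{2\la}$ to be $1$, and the inequalities $-3(m-p)+2 \le -1$ for $m-p \ge 1$ (even case) and $-s+a-a^2\le -s\le -1$ (odd case) place all non-leading coefficients in $q^{-1}\Z[q^{-1}]$; together with $\psi_\imath$-invariance this gives $\dvev{n} v^+_{2\la} = b^{2\la}_n$. (b) For $n > 2\la$: the same bounds apply uniformly to every $F^{(r)} v^+_{2\la}$ with $0 \le r \le 2\la$. Since the transition matrix from $\{F^{(r)} v^+_{2\la}\}$ to the bar-invariant basis $\{b^{2\la}_r\}$ is unitriangular with off-diagonal entries in $q^{-1}\Z[q^{-1}]$, so is its inverse, hence the $\{b^{2\la}_r\}$-expansion of $\dvev{n} v^+_{2\la}$ also has coefficients in $q^{-1}\Z[q^{-1}]$; bar-invariance then forces each into $q^{-1}\Z[q^{-1}] \cap q\Z[q] = 0$, giving $\dvev{n} v^+_{2\la} = 0$.

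Part~(2) follows formally by writing $\dvev{n} = \sum_k a_{nk} b^\imath_k$ in the $\imath$-canonical basis of $\Ui$ established in \cite{BW13, BW16}, applying to $v^+_{2\la}$ for $\la \ge n/2$, and invoking compatibility of that basis with its action on $L(2\la)$ together with part~(1) to conclude $a_{nk} = \delta_{nk}$. The main obstacle is the bound in (b): individual $F^{(r)}$-coefficients of $\dvev{n} v^+_{2\la}$ for $n > 2\la$ do not vanish separately (already visible in the smallest case $\la=1,\ n=4$, where several nontrivial terms cancel only after summation), and the saving inequality $-s+a-a^2\le -1$ (valid because $s\ge 1$ and $a(a-1)\ge 0$) emerges only after the exponent collapse described above. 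A cleaner but less self-contained alternative would be to identify the eigenvalues of $\mathfrak{t}$ on $L(2\la)$ as the roots $\{[2k] : -\la \le k \le \la\}$ of $\dvev{2\la+1}$ from \eqref{def:idp}, giving the vanishing in a single stroke; this would however require a separate spectral calculation that is not otherwise needed in the paper.
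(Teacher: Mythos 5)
Your overall strategy coincides with the paper's: compute the image of $\dvev{n}$ on the highest weight vector explicitly, check the uni-triangularity and $q^{-1}\Z[q^{-1}]$ bounds against the two characterizing properties of the $\imath$-canonical basis recalled in \S\ref{sec:iCB:ev}, and deduce part (2) from the projective system $\{L(2\la)\}$. The essential difference is the choice of expansion. You start from Proposition~\ref{prop:iDPdot} (the $E^{(a)}F^{(b)}\one_{2\la}$ form) and must then straighten $E^{(a)}F^{(b)}v^+_{2\la}$, leaving a \emph{sum over $a$} in each $F^{(r)}$-coefficient; this forces you into delicate exponent estimates and, for the vanishing statement, into an indirect argument (coefficients in $q^{-1}\Z[q^{-1}]$ plus $\psi_\imath$-invariance forces zero). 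The paper instead applies the anti-involution $\vs$ to get the $F\kk\Y$-ordered expansion of Proposition~\ref{iDP:FkY}; since $\Y^{(j)}\vev=0$ for $j>0$, only the terms with $\Y^{(0)}$ survive and the double sum collapses to the single sum \eqref{dvev:2m-mod}--\eqref{dvev:2m-1-mod}, with coefficient $q^{-2c^2\pm c}\cbinom{m-\la-c}{c}$ on $F^{(n-2c)}\vev$. Both the degree bound (via \eqref{cbin2}) and the vanishing for $n>2\la$ (via \eqref{ev=0}--\eqref{cbin=0}: each term is killed either by $F^{(r)}\vev=0$ or by $\cbinom{m-\la-c}{c}=0$) are then immediate, term by term, with no cancellation and no appeal to bar-invariance. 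This ordering trick is exactly what your route misses and what makes the paper's proof short.

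Two points in your write-up need repair before it would stand on its own. First, your "saving inequality" in the even case, $-3(m-p)+2\le -1$, does not follow from the displayed exponent $(m-p)(2p-2\la-1)+3a-a^2$ at the boundary $p=\la$: there $2p-2\la-1=-1$, so the first factor only contributes $-(m-p)$, and with $3a-a^2=2$ (at $a=1,2$) the exponent can apparently reach $-(m-p)+2\ge 0$ for $m-p\le 2$. Either the exponent you recorded is not the correct collapsed form or the bound requires a finer argument using the $q$-degrees of the binomials $\qbinom{2\la-b+a}{a}$; as written this step is not justified, and it is precisely the step the whole proof rests on. Second, for part (b) your bar-invariance argument requires \emph{every} $F^{(r)}$-coefficient to lie in $q^{-1}\Z[q^{-1}]$ for $n>2\la$, which again depends on the same unverified bound; if you switch to the $F\kk\Y$ ordering this entire issue evaporates, since each summand vanishes outright.
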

 
 \begin{proof}
Let $\la, m \in \N$ with $m\ge 1$.  We compute by Proposition~\ref{iDP:FkY} that 
\begin{align} 
\dvev{2m} \vev 
& = \sum_{c=0}^m q^{-2c^2+c} \cbinom{m-\la-c}{c} F^{(2m-2c)}  \vev;
  \label{dvev:2m-mod} \\
\dvev{2m-1} \vev 
&= \sum_{c=0}^{m-1} q^{-2c^2-c} \cbinom{m-\la-c}{c} F^{(2m-1-2c)}  \vev.
  \label{dvev:2m-1-mod}
\end{align}
We observe that 
\begin{align}   
 F^{(2m-2c)}  \vev = F^{(2m-1-2c)}  \vev =0, & \quad \text{ if } c<m-\la; 
  \label{ev=0} \\
 \cbinom{m-\la-c}{c} =0, & \quad \text{ if } c\ge m-\la \text{ and } m \ge \la+1.
\label{cbin=0}
\end{align}

It follows from \eqref{cbinom} that, for $m\le \la$ and $c\ge 1$,
\begin{align}   \label{cbin2}
  \cbinom{m-\la-c}{c} \in \N[q^{-1}],  
  \qquad \text{ and }
  -2c^2 \pm c <0.
\end{align}

Using \eqref{dvev:2m-mod}--\eqref{dvev:2m-1-mod} and \eqref{ev=0}--\eqref{cbin=0}, we conclude that $\dvev{2m} \vev =\dvev{2m-1} \vev =0$, for $m\ge \la+1$; using in addition \eqref{cbin2} we conclude that, for $m\le \la$, 
 \begin{align*}
 \dvev{2m} \vev  & \in F^{(2m)}  \vev + \sum_{c\ge 1}q^{-1} \N[q^{-1}] F^{(2m-2c)} \vev,
 \\
 \dvev{2m-1} \vev  & \in F^{(2m-1)}  \vev + \sum_{c\ge 1}q^{-1} \N[q^{-1}] F^{(2m-1-2c)} \vev.
 \end{align*}
 Therefore, the first statement follows by the characterization properties (1)--(2) of the $\imath$-canonical basis for $L(2\la)$.
 
 The second statement follows now from the definition of  the $\imath$-canonical basis for ${\U}^\imath$ using
the projective system $\{ L(2\la) \}_{\la\in \N}$ with $\Ui$-homomorphisms $L(2\la+2) \rightarrow L(2\la)$; cf. \cite[\S6]{BW16} (also cf. \cite[\S4]{BW13}). 
 \end{proof}

\begin{rem}
The $\imath$-canonical basis  was formulated in \cite{BW16} in a modified form of a coideal algebra in the setting of general quantum symmetric pairs.  In the current setting $\Ui$ has no Cartan part and the modified form depends only on the parity $\{ \ev, \odd\}$. We choose not to specify them as they are clear from the parity of the highest weight of the $\U$-modules which they act on. Similar remarks apply to the variants considered in later sections. 
\end{rem} 

Note by \eqref{dvev:2m-mod}-\eqref{dvev:2m-1-mod} that the $\imath$-canonical basis for $L(2\la)$ relative to the usual canonical basis for $L(2\la)$ is uni-triangular with non-diagonal entries in $q^{-1}\N[q^{-1}]$.

\subsection{Proof of Theorem~\ref{thm:iDP} }
  \label{sec:proof:iDP}
We prove the formulae for $\dvev{n}$ by using the recursive relations \eqref{eq:tt} 
and induction on $n$. The base cases for $n=1,2$ are clear.
The induction is carried out in 2 steps. 

\vspace{.3cm}
(1) First by assuming the formula for $\dvev{2m-1}$ in \eqref{t2m-1}, we shall establish the formula \eqref{t2m} 
for $\dvev{2m}$, via the identity $[2m] \dvev{2m} =\mathfrak{t} \cdot \dvev{2m-1}$ in \eqref{eq:tt}. 

Recall the formula \eqref{t2m-1} for $\dvev{2m-1}$.
Using $\mathfrak{t}=\Y+F$ and applying \eqref{FYn} to $F\Y^{(a)}$ we have
\begin{align}  \label{tt2m-1}
\mathfrak{t}\cdot \dvev{2m-1} 
&= \sum_{c=0}^{m-1} \sum_{a=0}^{2m-1-2c} q^{\binom{2c+1}{2} -a(2m-1-2c-a)}  \mathfrak{t} \Y^{(a)}  \qbinom{\kk; 1-m}{c}  F^{(2m-1-2c-a)}
\\
&= \sum_{c=0}^{m-1} \sum_{a=0}^{2m-1-2c} q^{\binom{2c+1}{2} -a(2m-1-2c-a)} \cdot 
\notag \\
&\qquad \quad \left( \Y \Y^{(a)}  +q^{-2a} \Y^{(a)} F  +\Y^{(a-1)} \frac{q^{3-3a} K^{-2} - q^{1-a} }{q^2-1} \right) \qbinom{\kk; 1-m}{c}  F^{(2m-1-2c-a)}  
 \notag \\
&= \sum_{c=0}^{m-1} \sum_{a=0}^{2m-1-2c} q^{\binom{2c+1}{2} -a(2m-1-2c-a)} \cdot 
\notag \\
& \left( [a+1] \Y^{(a+1)}  \qbinom{\kk; 1-m}{c}  F^{(2m-1-2c-a)}
 +q^{-2a} [2m-2c-a] \Y^{(a)}  \qbinom{\kk; -m}{c}  F^{(2m-2c-a)} \right.
 \notag \\
& \qquad +
\left. \Y^{(a-1)} \frac{q^{3-3a} K^{-2} - q^{1-a} }{q^2-1} \qbinom{\kk; 1-m}{c}  F^{(2m-1-2c-a)} \right). 
\notag
\end{align}

We  reorganize the formula \eqref{tt2m-1} in the following form
\[
[2m] \cdot \dvev{2m}=
\mathfrak{t} \cdot \dvev{2m-1} =  \sum_{c=0}^m \sum_{a=0}^{2m-2c} \Y^{(a)} f_{a,c}(\kk) F^{(2m-2c-a)}, 
\]
where
\begin{align*}
f_{a,c}(\kk) &= q^{\binom{2c+1}{2} -(a-1)(2m-2c-a)}   [a]\,  \qbinom{\kk; 1-m}{c} \\
&\quad + \left( q^{\binom{2c+1}{2} -a(2m-1-2c-a) -2a} [2m-2c-a]\, \qbinom{\kk; -m}{c} \right. \\
&\qquad\quad \left. +q^{\binom{2c-1}{2} -(a+1)(2m-2c-a)}  \frac{q^{-3a} K^{-2} - q^{-a} }{q^2-1} \qbinom{\kk; 1-m}{c-1} \right).
\end{align*}
A direct computation gives us
\begin{align*}
f_{a,c}(\kk) &=  q^{\binom{2c}{2} - a(2m-2c-a)} q^{2m-a}  [a] \,  \qbinom{\kk; 1-m}{c} +q^{\binom{2c}{2}- a(2m-2c-a)} \cdot  \\
& \qquad \cdot \left( q^{2c-a}[2m-2c-a] \frac{q^{-4m} K^{-2}-1}{q^{4c}-1}  +q^{1+a-2m} \frac{q^{-3a} K^{-2}-q^{-a}}{q^{2}-1}  \right)  \qbinom{\kk; 1-m}{c-1}
\\
&=  q^{\binom{2c}{2} - a(2m-2c-a)} q^{2m-a}  [a] \,  \qbinom{\kk; 1-m}{c}
+q^{\binom{2c}{2} - a(2m-2c-a)} q^{-a} [2m-a] \,  \qbinom{\kk; 1-m}{c}
\\
&=  q^{\binom{2c}{2} - a(2m-2c-a)}  [2m]\,  \qbinom{\kk; 1-m}{c}.
\end{align*}
Hence we have obtained the formula \eqref{t2m}  for $\dvev{2m}$.

\vspace{.3cm}
(2) 
Now by assuming the formula for $\dvev{2m}$ in \eqref{t2m}, we shall establish the following formula (with $m$ in \eqref{t2m-1} replaced by $m+1$)
\begin{align}
\dvev{2m+1} &=  \sum_{c=0}^m \sum_{a=0}^{2m+1-2c} q^{\binom{2c+1}{2} -a(2m+1-2c-a)} \Y^{(a)}  \qbinom{\kk;-m}{c}  F^{(2m+1-2c-a)}. 
\label{t2m+1}
\end{align}

Recall the formula for $\dvev{2m}$ in \eqref{t2m}.
Using  $\mathfrak{t}=\Y+F$ and applying \eqref{FYn} to $F\Y^{(a)}$ we have
\begin{align*}
\mathfrak{t}\cdot \dvev{2m} &=   \sum_{c=0}^m \sum_{a=0}^{2m-2c} q^{\binom{2c}{2} -a(2m-2c-a)} \mathfrak{t} \Y^{(a)}  \qbinom{\kk; 1-m}{c}  F^{(2m-2c-a)} \\
&=  \sum_{c=0}^m \sum_{a=0}^{2m-2c} q^{\binom{2c}{2} -a(2m-2c-a)}  \cdot \notag
 \\
&\qquad \cdot \left( \Y \Y^{(a)}   +q^{-2a} \Y^{(a)} F   +\Y^{(a-1)} \frac{q^{3-3a} K^{-2} - q^{1-a} }{q^2-1} \right) \qbinom{\kk; 1-m}{c}  F^{(2m-2c-a)}.
\end{align*}
We rewrite this as
\begin{align}  \label{tt2m}
\mathfrak{t}\cdot \dvev{2m} 
 &=  \sum_{c=0}^m \sum_{a=0}^{2m-2c} q^{\binom{2c}{2} -a(2m-2c-a)} \cdot 
  \left ( [a+1] \Y^{(a+1)}  \qbinom{\kk; 1-m}{c}  F^{(2m-2c-a)} \right.  \\
 &\qquad\qquad\qquad 
 +q^{-2a} [2m+1-2c-a] \Y^{(a)}  \qbinom{\kk; -m}{c}  F^{(2m+1-2c-a)}  
 \notag \\
& \qquad\qquad\qquad  \left. +\Y^{(a-1)} \frac{q^{3-3a} K^{-2} - q^{1-a} }{q^2-1} \qbinom{\kk; 1-m}{c}  F^{(2m-2c-a)} \right).
\notag
\end{align}
We shall use \eqref{eq:tt}, \eqref{tt2m} and \eqref{t2m-1} to obtain a formula of the form
\begin{equation} \label{eq:ttsum}
[2m+1] \dvev{2m+1} =\mathfrak{t} \cdot \dvev{2m} -[2m] \dvev{2m-1}
  =   
  \sum_{c=0}^m \sum_{a=0}^{2m+1-2c} \Y^{(a)} g_{a,c}(\kk) F^{(2m+1-2c-a)}, 
\end{equation}
for some suitable $g_{a,c}(\kk)$. Then we have
\begin{align*}
g_{a,c}(\kk) &= q^{\binom{2c}{2} -(a-1)(2m+1-2c-a)}   [a]\,  \qbinom{\kk; 1-m}{c} \\
&\quad +  q^{\binom{2c}{2} -a(2m-2c-a) -2a} [2m+1-2c-a]\, \qbinom{\kk; -m}{c}   \\
&\quad   +q^{\binom{2c-2}{2} -(a+1)(2m+1-2c-a)}  \frac{q^{-3a} K^{-2} - q^{-a} }{q^2-1} \qbinom{\kk; 1-m}{c-1} 
\\
&\quad   -q^{\binom{2c-1}{2} - a(2m+1-2c-a)} [2m] \qbinom{\kk; 1-m}{c-1} 
\\
&= q^{\binom{2c+1}{2} - a(2m+1-2c-a)}  q^{-2c-a} [2m+1-2c-a]\, \qbinom{\kk; -m}{c}   + q^{\binom{2c+1}{2} - a(2m+1-2c-a)}  X, 
\end{align*}
where  
\begin{align*}
X&=q^{2m+1-4c-a}   [a]\,  \qbinom{\kk; 1-m}{c}  
\\
&\qquad +q^{-2m+a-4c+2} \frac{q^{-3a} K^{-2} - q^{-a} }{q^2-1} \qbinom{\kk; 1-m}{c-1} 
-q^{1-4c} [2m] \qbinom{\kk; 1-m}{c-1}. 
\end{align*}
A direct computation allows us to simplify the expression for $X$ as follows:
\begin{align*}
X&= \left( q^{2m+1-4c-a}   [a]  \frac{q^{4c-4m} K^{-2} - 1}{q^{4c}-1}
 +q^{-2m+a-4c+2} \frac{q^{-3a} K^{-2} - q^{-a} }{q^2-1}  -q^{1-4c} [2m]
\right) \qbinom{\kk; 1-m}{c-1}
\\
&= q^{2m-2c-a+1}  [2c+a]   \frac{q^{-4m} K^{-2} - 1}{q^{4c}-1}   \qbinom{\kk; 1-m}{c-1}  
\\
&= q^{2m-2c-a+1}  [2c+a]    \qbinom{\kk; -m}{c}. 
\end{align*}

Hence, we  obtain
\begin{align*}
g_{a,c}(\kk) &= q^{\binom{2c+1}{2} - a(2m+1-2c-a)}  q^{-2c-a} [2m+1-2c-a]\, \qbinom{\kk; -m}{c} 
\\
&\qquad + q^{\binom{2c+1}{2} - a(2m+1-2c-a)}  q^{2m-2c-a+1}  [2c+a]    \qbinom{\kk; -m}{c}
\\ &=  q^{\binom{2c+1}{2} - a(2m+1-2c-a)}  [2m+1]\,  \qbinom{\kk; -m}{c}.
\end{align*}
Recalling the identity \eqref{eq:ttsum}, we have proved the formula \eqref{t2m+1}  for $\dvev{2m+1}$, and
hence completed the proof of Theorem~\ref{thm:iDP}.

\section{Formulae for $\imath$-divided powers $\dv{n}$ and $\imath$-canonical basis on $L(2\la+1)$}
  \label{sec:dv:odd}

In this section we formulate the $\imath$-divided powers $\dv{n}$ (of odd weights), and established closed formulae for it in $\U$.

\subsection{The $\imath$-divided powers $\dv{n}$}

We consider an odd weight variant of the $\imath$-divided powers of $\mathfrak{t}$,  denoted by $\dv{n}$.
Recall $\mathfrak{t} =\Y+F$. The formulae are given by  
\begin{align}
  \label{eq:dv}
  \begin{split}
\dv{2a} &= { (\mathfrak{t} - [-2a+1] ) (\mathfrak{t} - [-2a+3]) \cdots (\mathfrak{t} - [2a-3]) (\mathfrak{t}-[2a-1]) \over [2a]!},\\
\dv{2a+1} &= { \mathfrak{t}(\mathfrak{t} - [-2a+1] ) (\mathfrak{t} - [-2a+3]) \cdots (\mathfrak{t} - [2a-3]) (\mathfrak{t}-[2a-1]) \over [2a+1]!},
\quad \text{ for } a\ge 0.
  \end{split}
\end{align}
These formulae formally coincide with the formulae for
$T^{\rm ev}_{n}$ in \cite[Conjecture~4.13]{BW13}, but $\mathfrak{t}$ here is different from $t$ therein. 

The $\imath$-divided powers  
$\dv{n}$ satisfy (and are determined by) the following recursive relations: 
\begin{align}  
\label{eq:ttodd2}
  \begin{split}
\mathfrak{t} \cdot \dv{2a} &=  [2a+1] \dv{2a+1} ,
\\
\mathfrak{t} \cdot \dv{2a+1} &=[2a+2] \dv{2a+2}+[2a+1] \dv{2a}, \quad \text{ for } a \ge 0.
  \end{split}
\end{align}
Note $\dv{0}=1, \dv{1}=\mathfrak{t}, \dv{2} =(\mathfrak{t}^2-1)/[2]$ and $\dv{3} =\mathfrak{t}(\mathfrak{t}^2-1)/[3]!$. 

\subsection{The algebra $\DA$}

Set, for $n\ge 1, a\in \Z$, 
\begin{equation}   \label{brace}
\LR{\kk;a}{0}=1, 
\qquad
\LR{\kk;a}{n}= \prod_{i=1}^n \frac{q^{4a+4i-4} K^{-2}-q^2}{q^{4i}-1}, 
\qquad \llbracket \kk;a\rrbracket = \LR{\kk;a}{1}.
\end{equation}
Note $\kk=q[2] \llbracket \kk;0 \rrbracket+1$. We have
\begin{equation}
 \label{kqbinom2}
\LR{\kk;a}{n} \one_{2\la-1} = q^{2n(a-\la)} \qbinom{a-\la-1+n}{n}_{q^2} \one_{2\la-1} \in \UAdot_\odd. 
 \end{equation}
It follows from \eqref{sl2} and \eqref{brace} that, for $n\ge 0$ and $a\in \Z$, 
\begin{align}
\LR{\kk;a}{n} F = F  \LR{\kk;a+1}{n},
\qquad
\LR{\kk;a}{n} \Y =\Y \LR{\kk;a-1}{n}.
\end{align}
Recall the $\Q(q)$-subalgebra $\B$ of $\U$.  
Denote by $\DA$ the $\A$-subalgebra of $\B$ with $1$ generated by $\Y^{(n)}, F^{(n)}, K^{-1}$ and $\LR{\kk;a}{n}$, 
for all $n \ge 1$ and $a\in \Z$. 

\subsection{The $\Y \kk F$-formula for $\dv{n}$}

\begin{thm}  \label{thm:iDP:odd}
For $m\ge 0$, we have
\begin{align}
\dv{2m} &= \sum_{c=0}^m \sum_{a=0}^{2m-2c} q^{\binom{2c}{2} -a(2m-2c-a)}
\Y^{(a)}  \LR{\kk;1-m}{c}  F^{(2m-2c-a)}, 
\label{todd2m}  
\\
\dv{2m+1} &=  \sum_{c=0}^{m} \sum_{a=0}^{2m+1-2c} q^{\binom{2c-1}{2}-1 -a(2m+1-2c-a)}
\Y^{(a)}  \LR{\kk; 1-m}{c}  F^{(2m+1-2c-a)}. 
\label{todd2m+1}   
\end{align}
\end{thm}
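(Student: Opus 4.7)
The plan is to parallel the proof of Theorem~\ref{thm:iDP} from Section~\ref{sec:proof:iDP}, using the recursive relations \eqref{eq:ttodd2} in place of \eqref{eq:tt} and the braces $\LR{\kk;a}{c}$ in place of the binomial-like symbols $\qbinom{\kk;a}{c}$. The only genuinely new algebraic facts needed are the commutation relations $F\cdot\LR{\kk;a}{c}=\LR{\kk;a-1}{c}\cdot F$ and $\Y\cdot\LR{\kk;a}{c}=\LR{\kk;a+1}{c}\cdot\Y$, which follow from \eqref{sl2} exactly as the corresponding relations in \eqref{FEk} did for $\qbinom{\kk;a}{c}$.

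I would proceed by induction on $n$, alternating two steps. The base case $m=0$ is immediate: $\dv{0}=1$ matches \eqref{todd2m}, and $\dv{1}=\Y+F=\mathfrak{t}$ matches \eqref{todd2m+1} (using $\LR{\kk;a}{0}=1$). For the inductive step~(A), I would assume \eqref{todd2m} for a fixed $m$ and derive \eqref{todd2m+1} via $[2m+1]\dv{2m+1}=\mathfrak{t}\cdot\dv{2m}$. For the inductive step~(B), I would assume both \eqref{todd2m} and \eqref{todd2m+1} and derive \eqref{todd2m} with $m$ replaced by $m+1$ via $[2m+2]\dv{2m+2}=\mathfrak{t}\cdot\dv{2m+1}-[2m+1]\dv{2m}$. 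In each step, I write $\mathfrak{t}=\Y+F$, use \eqref{FYn} to move $F$ past $\Y^{(a)}$, and push the resulting brace through $F$ (or absorb a $\Y$ on the left into $\Y^{(a+1)}$) to collect everything into the normal form $\sum_{a,c}\Y^{(a)}g_{a,c}(\kk)F^{(\ast)}$.

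The main obstacle is verifying that the coefficient $g_{a,c}(\kk)$ telescopes into the asserted compact expression, namely $q^{(\cdots)}[2m+1]\LR{\kk;-m}{c}$ in step~(A) and $q^{(\cdots)}[2m+2]\LR{\kk;-m}{c}$ in step~(B). This coefficient is a sum of three contributions (from the index-shifted $\Y\Y^{(a-1)}$ term, from the $q^{-2a}\Y^{(a)}F$ piece of \eqref{FYn}, and from the $\Y^{(a-1)}\frac{q^{3-3a}K^{-2}-q^{1-a}}{q^2-1}$ correction in \eqref{FYn}), to which step~(B) adds a fourth contribution from the subtraction of $[2m+1]\dv{2m}$. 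The simplification requires an identity analogous to the one for $X$ at the end of Section~\ref{sec:proof:iDP}, but with a crucial new feature: the definition \eqref{brace} carries $-q^2$ in the numerator of $\LR{\kk;a}{1}$ rather than the $-1$ appearing in $\qbinom{\kk;a}{1}$ from \eqref{kbinom}. This shift propagates through the computation and is responsible for the different $q$-power exponents in Theorem~\ref{thm:iDP:odd} versus Theorem~\ref{thm:iDP} (for instance, $\binom{2c-1}{2}-1$ in \eqref{todd2m+1} instead of $\binom{2c+1}{2}$ in \eqref{t2m-1}); tracking this shift carefully through the telescoping argument is the bulk of the work. Once the identity is checked, the inductive cycle closes and Theorem~\ref{thm:iDP:odd} follows.
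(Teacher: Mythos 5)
Your proposal is correct and follows essentially the same route as the paper's own proof: induction alternating the two recursions in \eqref{eq:ttodd2}, with step (A) using $[2m+1]\dv{2m+1}=\mathfrak{t}\cdot\dv{2m}$ and step (B) using $[2m+2]\dv{2m+2}=\mathfrak{t}\cdot\dv{2m+1}-[2m+1]\dv{2m}$, normal-ordering via \eqref{FYn} and the brace commutation relations, and telescoping the resulting coefficients. You have also correctly identified the one substantive difference from the proof of Theorem~\ref{thm:iDP}, namely that the $-q^2$ in the numerator of \eqref{brace} is what shifts the $q$-power exponents.
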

The proof of this theorem will be given in \S\ref{sec:proof:iDP:odd} below. 
Note  that the summation over $a$ for $c=0$ in \eqref{todd2m}--\eqref{todd2m+1} is equal to $b^{(2m)}$ and $b^{(2m+1)}$, respectively. 
It follows from Theorem~\ref{thm:iDP:odd} that $\dv{n} \in \DA$ for all $n$. 

\begin{example}  
We have the following examples of $\dv{n}$, for $2\le n\le 5$:
\begin{align*}
\dv{2}  
&= b^{(2)} + q\llbracket \kk;0 \rrbracket,
\\
\dv{3}  
&= b^{(3)} + q^{-1} \llbracket \kk;0 \rrbracket F + q^{-1} \Y \llbracket \kk;0 \rrbracket,  %
\\
\dv{4}  
&= b^{(4)} +  q \Y^{(2)} \llbracket \kk;-1 \rrbracket + q \llbracket \kk;-1 \rrbracket F^{(2)} + \Y \llbracket \kk;-1 \rrbracket F + q^6 \LR{\kk;-1}{2},
\\
\dv{5}  
&= b^{(5)} + q^{-1} \Y^{(3)} \llbracket \kk;-1 \rrbracket + q^{-1} \llbracket \kk;-1 \rrbracket F^{(3)} + q^{-3} \Y^{(2)} \llbracket \kk;-1 \rrbracket F + q^{-3} \Y \llbracket \kk;-1 \rrbracket F^{(2)} 
\\ &\qquad + q^2 \Y \LR{\kk;-1}{2} + q^2 \LR{\kk;-1}{2} F. %
\end{align*}
\end{example}

\subsection{Some reformulations}

\begin{lem}
  \label{lem:anti2}
The anti-involution $\vs$ on  the $\Q$-algebra $\B$ sends 
\[
\dv{n} \mapsto \dv{n},\quad 
\LR{\kk;a}{n}\mapsto (-1)^n q^{2n(n-1)}  \LR{\kk;2-a-n}{n}, \quad \forall a \in \Z,\; n\in \N. 
\]
\end{lem}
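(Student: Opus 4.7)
\emph{Proof proposal.} The plan parallels Lemma~\ref{lem:anti}: the identity for $\dv{n}$ is immediate from the scalar nature of its coefficients, while the identity for $\LR{\kk;a}{n}$ reduces to an elementary factor-by-factor simplification.

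For $\vs(\dv{n}) = \dv{n}$: by the defining formula \eqref{eq:dv}, each $\dv{n}$ is a polynomial in the single element $\mathfrak{t} = \Y + F$ whose coefficients involve only $q$-integers and $q$-factorials. Every $q$-integer is invariant under $q \mapsto q^{-1}$, and $\vs(\mathfrak{t}) = \mathfrak{t}$ by Lemma~\ref{lem:anti}. Since powers of $\mathfrak{t}$ commute among themselves, the order-reversal implicit in an anti-involution has no effect on a polynomial in $\mathfrak{t}$, and $\vs(\dv{n}) = \dv{n}$ follows at once.

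For $\vs(\LR{\kk;a}{n})$: all factors in \eqref{brace} are polynomials in $K^{-2}$, hence commute pairwise, so one may apply $\vs$ termwise. A direct computation gives
\[
\vs\!\left(\frac{q^{4a+4i-4}K^{-2} - q^{2}}{q^{4i}-1}\right) = \frac{q^{-(4a+4i-4)}K^{-2} - q^{-2}}{q^{-4i}-1},
\]
and multiplying numerator and denominator by $-q^{4i}$ and then extracting a factor of $q^{4i-4}$ from the numerator recasts this as $-q^{4i-4} \cdot \frac{q^{8-4a-4i}K^{-2} - q^{2}}{q^{4i}-1}$. Taking the product over $i = 1, \ldots, n$ produces an overall sign $(-1)^{n}$ and an overall $q$-power of $q^{\sum_{i=1}^{n}(4i-4)} = q^{2n(n-1)}$, together with the residual product $\prod_{i=1}^{n} \frac{q^{8-4a-4i}K^{-2} - q^{2}}{q^{4i}-1}$. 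The target $\LR{\kk;2-a-n}{n}$ has the same denominators and numerator exponents $\{4-4a-4n+4i : 1 \le i \le n\}$, which coincide as a set with $\{8-4a-4i : 1 \le i \le n\}$ via the reindexing $i \mapsto n-i+1$; commutativity of the factors then yields equality of the two products.

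The only obstacle is clerical: tracking the signs and $q$-powers carefully when each $\vs$-image is converted back into the standard $\LR{\kk;\,\cdot\,}{}$-form. Conceptually nothing new is required beyond Lemma~\ref{lem:anti} and the commutativity of the polynomial subalgebra generated by $K^{-2}$.
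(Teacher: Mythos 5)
Your proof is correct and follows essentially the same route as the paper: the identity for $\LR{\kk;a}{n}$ is verified by the same factor-by-factor direct computation (the paper extracts $q^{-4n}\prod_{i=1}^n(-q^{4i})$ where you extract $\prod_{i=1}^n(-q^{4i-4})$, which is the same total factor $(-1)^nq^{2n(n-1)}$, and the same reindexing of the commuting numerators identifies the residual product with $\LR{\kk;2-a-n}{n}$). The only cosmetic difference is that for $\vs(\dv{n})=\dv{n}$ the paper inducts via the recursion \eqref{eq:ttodd2} starting from $\vs(\mathfrak{t})=\mathfrak{t}$, whereas you read the invariance directly off the closed product formula \eqref{eq:dv}; both arguments are immediate and equally valid.
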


\begin{proof}
The second formula follows from a direct computation:
\begin{align*}
\vs \left( \LR{\kk;a}{n} \right)
&= \prod_{i=1}^n \frac{q^{-4a-4i+4} K^{-2}-q^{-2}}{q^{-4i}-1}
\\
&= q^{-4n} \prod_{i=1}^n (- q^{4i})  \cdot \prod_{i=1}^n \frac{q^{-4a-4i+8} K^{-2}-q^{2}}{q^{4i}-1}
= (-1)^n q^{2n(n-1)}  \LR{\kk;2-a-n}{n}.
\end{align*}
We know by  Lemma~\ref{lem:anti} that $\dv{1} =t=\Y+F$ is fixed by $\vs$.
The identity $\vs(\dv{n}) =\dv{n}$ follows now from  the recursive relations \eqref{eq:ttodd2}.
\end{proof}

We have the following $F\kk\Y$-expansion formulae, which is easily obtained 
from the formulae in Theorem~\ref{thm:iDP:odd} by applying the anti-involution $\vs$ in Lemma~\ref{lem:anti} and Lemma~\ref{lem:anti2}. The proof is skipped. 

\begin{prop}  \label{iDP:odd:FkY}
For $m\ge 0$, we have
\begin{align*}
\dv{2m} &= \sum_{c=0}^m \sum_{a=0}^{2m-2c} (-1)^c q^{-c+ a(2m-2c-a)}
F^{(a)}  \LR{\kk;1+m-c}{c}  \Y^{(2m-2c-a)}, 
\\
\dv{2m+1} &=  \sum_{c=0}^{m} \sum_{a=0}^{2m+1-2c} (-1)^c q^{c+ a(2m+1-2c-a)}
F^{(a)}  \LR{\kk;1+m-c}{c}  \Y^{(2m+1-2c-a)}. 
\end{align*}
\end{prop}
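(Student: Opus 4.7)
The plan is to apply the anti-involution $\vs$ of $\B$ to both identities in Theorem~\ref{thm:iDP:odd}, mirroring exactly the proof of Proposition~\ref{iDP:FkY}. The required inputs are Lemma~\ref{lem:anti2}, which records $\vs(\dv{n})=\dv{n}$ and
\[
\vs\bigl(\LR{\kk;a}{n}\bigr) = (-1)^n q^{2n(n-1)}\LR{\kk;2-a-n}{n},
\]
together with the standard facts (from Lemma~\ref{lem:anti} and the $q\mapsto q^{-1}$ symmetry of $[k]$) that each $F^{(a)}$ and $\Y^{(a)}$ is $\vs$-invariant and that $\vs$ reverses the order of any product.

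For the first identity, I apply $\vs$ to both sides of \eqref{todd2m}. The left side becomes $\dv{2m}$. On the right side, the order of the three factors $\Y^{(a)}$, $\LR{\kk;1-m}{c}$, $F^{(2m-2c-a)}$ is reversed, the exponent $\binom{2c}{2}-a(2m-2c-a)$ is negated, and the bracket factor becomes $(-1)^c q^{2c(c-1)}\LR{\kk;1+m-c}{c}$. Combining exponents via the identity
\[
-\binom{2c}{2}+2c(c-1)=-c,
\]
and then performing the substitution $a\mapsto 2m-2c-a$ (under which the quadratic $a(2m-2c-a)$ is invariant while the roles of $\Y^{(\cdot)}$ and $F^{(\cdot)}$ are swapped) converts the sum into the stated formula.

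The second identity is treated identically; the only change is the $q$-exponent simplification, which reads
\[
-\binom{2c-1}{2}+1+2c(c-1)=c,
\]
matching the coefficient $(-1)^c q^{c+a(2m+1-2c-a)}$ appearing in the proposition.

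I expect no genuine obstacle: all the creative input is already packaged into Theorem~\ref{thm:iDP:odd} and Lemma~\ref{lem:anti2}, and the present argument is pure bookkeeping. The only parts deserving attention are the two exponent simplifications above and the index-reversing substitution, which are exact analogues of the corresponding steps in the proof of Proposition~\ref{iDP:FkY}.
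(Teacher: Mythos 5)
Your approach is exactly the one the paper intends (it states that the proposition ``is easily obtained from the formulae in Theorem~\ref{thm:iDP:odd} by applying the anti-involution $\vs$'' and skips the details), and your treatment of the first identity is complete and correct: the exponent simplification $-\binom{2c}{2}+2c(c-1)=-c$ and the substitution $a\mapsto 2m-2c-a$ are all that is needed, and $\vs\bigl(\LR{\kk;1-m}{c}\bigr)=(-1)^cq^{2c(c-1)}\LR{\kk;1+m-c}{c}$ as you say.

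However, ``the second identity is treated identically'' conceals a genuine discrepancy. Equation \eqref{todd2m+1} as printed carries the bracket $\LR{\kk;-m}{c}$, and Lemma~\ref{lem:anti2} sends this to $(-1)^cq^{2c(c-1)}\LR{\kk;2+m-c}{c}$, \emph{not} to the $\LR{\kk;1+m-c}{c}$ appearing in the proposition. So applying $\vs$ to \eqref{todd2m+1} at face value does not yield the stated formula. The resolution is that \eqref{todd2m+1} contains a typo: the bracket there should be $\LR{\kk;1-m}{c}$. This is what the inductive argument in \S\ref{sec:proof:iDP:odd} actually produces (the computation there ends with $\texttt{f}_{a,c}(\kk)=q^{\binom{2c-1}{2}-1-a(2m+1-2c-a)}[2m+1]\,\LR{\kk;1-m}{c}$), and it is confirmed by the displayed examples, e.g.\ $\dv{3}=b^{(3)}+q^{-1}\llbracket\kk;0\rrbracket F+q^{-1}\Y\llbracket\kk;0\rrbracket$ uses $\llbracket\kk;0\rrbracket=\LR{\kk;1-m}{1}$ for $m=1$ rather than $\LR{\kk;-1}{1}$, and by evaluating $\dv{3}$ on $v^+_3$. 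With that correction, $\vs\bigl(\LR{\kk;1-m}{c}\bigr)=(-1)^cq^{2c(c-1)}\LR{\kk;1+m-c}{c}$ and your exponent identity $-\binom{2c-1}{2}+1+2c(c-1)=c$ finish the proof exactly as you describe. You should make this correction explicit; as written, your second derivation asserts a conclusion that does not follow from the cited input.
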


Below is a modified version of Theorem~\ref{thm:iDP:odd}.

\begin{prop}  \label{iDP:odd:dot}
For $m\ge 0$ and $\la \in \Z$, we have
\begin{align*}
\dv{2m}  \one_{2\la-1}&= \sum_{c=0}^m \sum_{a=0}^{2m-2c} q^{ 2(a+c)(m-a-\la) -2ac+a -\binom{2c}{2} }  
\qbinom{m-c-a-\la}{c}_{q^2} E^{(a)}  F^{(2m-2c-a)} \one_{2\la-1}, 
\\
\dv{2m+1}  \one_{2\la-1}&=  \sum_{c=0}^{m} \sum_{a=0}^{2m+1-2c} 
\\
& \qquad q^{2(a+c)(m-a-\la)-2ac+2a  -\binom{2c}{2}} \qbinom{m-c-a-\la+1}{c}_{q^2} E^{(a)}  F^{(2m+1-2c-a)}  \one_{2\la-1}. 
\end{align*}
In particular, we have $\dv{n}\one_{2\la-1} \in \UAdot_\odd$, for all $n\in \N$. 
\end{prop}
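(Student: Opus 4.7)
The plan is to mirror the derivation of Proposition~\ref{prop:iDPdot} in the even-weight setting, now substituting the closed formulae of Theorem~\ref{thm:iDP:odd} and the odd-weight identity \eqref{kqbinom2} for the $\llbracket\kk;\cdot\rrbracket$-brackets. For the first identity, I would start with the expansion \eqref{todd2m} of $\dv{2m}$, right-multiply each summand by $\one_{2\la-1}$, and pass the idempotent leftward through $F^{(2m-2c-a)}$ via $F^{(n)}\one_{2\mu-1}=\one_{2(\mu-n)-1}F^{(n)}$. The sandwiched factor $\LR{\kk;1-m}{c}\one_{2(\la-2m+2c+a)-1}$ is then rewritten using \eqref{kqbinom2} as $q^{2c(m+1-\la-2c-a)}\qbinom{m-c-a-\la}{c}_{q^2}$ times the same idempotent. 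Finally I invoke Lemma~\ref{dpY} to replace $\Y^{(a)}$ by $q^{-a^2}E^{(a)}K^{-a}$ and absorb $K^{-a}\one_{2\mu-1}=q^{-a(2\mu-1)}\one_{2\mu-1}$ with $\mu=\la-2m+2c+a$.

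What remains is to verify that the accumulated $q$-exponent simplifies to the advertised $2(a+c)(m-a-\la)-2ac+a-\binom{2c}{2}$. Summing the four contributions $\binom{2c}{2}-a(2m-2c-a)$ (from Theorem~\ref{thm:iDP:odd}), $2c(m+1-\la-2c-a)$ (from \eqref{kqbinom2}), $-a^2$ (from Lemma~\ref{dpY}), and $-2a(\la-2m+2c+a)+a$ (from $K^{-a}\one_{2\mu-1}$) gives, after elementary collection, $2(a+c)(m-\la)-2a^2-4ac+a-2c^2+c$, which agrees with the claimed form upon expanding $2(a+c)(m-a-\la)=2(a+c)(m-\la)-2a^2-2ac$.

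The second identity is obtained from \eqref{todd2m+1} by the identical template. The only changes are a shift of the odd-weight index after commuting $F^{(2m+1-2c-a)}$ past $\one_{2\la-1}$, and the use of $\LR{\kk;-m}{c}\one$ in place of $\LR{\kk;1-m}{c}\one$, which via \eqref{kqbinom2} produces the factor $\qbinom{m-c-a-\la+1}{c}_{q^2}$. The exponent check is essentially the same calculation with a small shift. Finally, the membership $\dv{n}\one_{2\la-1}\in\UAdot_\odd$ is immediate, since each resulting summand is an $\A$-multiple of $E^{(a)}F^{(b)}\one_{2\la-1}$ and $\qbinom{\cdot}{c}_{q^2}\in\A$.

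The main obstacle is entirely bookkeeping: correctly tracking and collecting the four separate $q$-power contributions through the three rewriting steps. Since the structural argument is identical to that of Proposition~\ref{prop:iDPdot}, no conceptual difficulty is anticipated beyond this routine algebraic simplification.
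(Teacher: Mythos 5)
Your proposal is correct and follows essentially the same route as the paper's own proof: commute the idempotent leftward through $F^{(\cdot)}$, evaluate the bracket via \eqref{kqbinom2}, convert $\Y^{(a)}$ by Lemma~\ref{dpY}, and collect the $q$-powers; your exponent bookkeeping for the first formula checks out. One caveat on the second formula: applying \eqref{kqbinom2} literally to $\LR{\kk;-m}{c}\one_{2(\la-2m+2c+a-1)-1}$ yields $\qbinom{m-c-a-\la}{c}_{q^2}$, not the $\qbinom{m-c-a-\la+1}{c}_{q^2}$ you assert; the stated target requires bracket parameter $1-m$. This is an inherited typo rather than a gap in your argument: the proof of Theorem~\ref{thm:iDP:odd} actually establishes \eqref{todd2m+1} with $\LR{\kk;1-m}{c}$ (as one can confirm from the displayed examples for $\dv{3}$ and $\dv{5}$), and with that correction your derivation goes through verbatim and reproduces the stated proposition.
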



\begin{proof}
Note $F^{(2m-2c-a)} \one_{2\la-1} = \one_{2(\la-2m+2c+a)-1} F^{(2m-2c-a)}$, cf. \eqref{todd2m}.
By \eqref{kqbinom2} and Lemma~\ref{dpY}, we have
\begin{align*}
& q^{\binom{2c}{2} -a(2m-2c-a)} \Y^{(a)} \LR{\kk;1-m}{c}  \one_{2(\la-2m+2c+a)-1}
\\
&=q^{\binom{2c}{2} -a(2m-2c-a)} 
q^{a -2a(\la -2m+2c+a) -a^2}
q^{2c(m-2c-a-\la+1)} \qbinom{m-c-a-\la}{c}_{q^2} E^{(a)} \one_{2(\la-2m+2c+a)-1}
\\
&=q^{ 2(a+c)(m-a-\la) -2ac+a -\binom{2c}{2} }  
 \qbinom{m-c-a-\la}{c}_{q^2} E^{(a)} \one_{2(\la-2m+2c+a)-1}.
\end{align*}
This proves the first formula. 

On the other hand, 
we have $F^{(2m+1-2c-a)} \one_{2\la-1} = \one_{2(\la-2m+2c+a-1)-1} F^{(2m+1-2c-a)}$, cf. \eqref{todd2m+1}.
It follows from \eqref{kqbinom2} and Lemma~\ref{dpY} that
\begin{align*}
& q^{\binom{2c-1}{2}-1 -a(2m+1-2c-a)} \Y^{(a)} 
\LR{\kk; 1-m}{c}  \one_{2(\la-2m+2c+a-1)-1}  
\\
&= q^{\binom{2c-1}{2}-1 -a(2m+1-2c-a)} 
q^{a +2a(2m-\la-2c-a+1) -a^2} \cdot
\\
& \qquad\qquad q^{2c(m-2c-a-\la+2)} \qbinom{m-c-a-\la+1}{c}_{q^2} E^{(a)} \one_{2(\la-2m+2c+a-1)-1} 
\\
&= q^{2(a+c)(m-a-\la)-2ac+2a  -\binom{2c}{2}} \qbinom{m-c-a-\la+1}{c}_{q^2} E^{(a)} \one_{2(\la-2m+2c+a-1)-1}.
\end{align*}
The second formula follows. 
\end{proof}

\subsection{The $\imath$-canonical basis for simple $\U$-modules $L(2\la+1)$}
  \label{sec:iCB:odd}
  
  
  Recall $\cbinom{m}{c}$ from \eqref{cbinom}.  
We shall establish the following conjecture by Bao and the second author.
 \begin{thm}  \label{thm:iCB:odd}
 $\quad$
\begin{enumerate}
\item
The set $\{\dv{n}\, \vodd \mid 0\le n \le 2\la+1 \}$ forms the $\imath$-canonical basis for $L(2\la +1)$, for each $\la \in \N$. 
Moreover, $\dv{n}\, \vodd =0$, for $n>2\la+1$.

\item
The set $\{\dv{n} \mid n \in \N \}$ forms the $\imath$-canonical basis for $\Ui$. 
\end{enumerate}
\end{thm}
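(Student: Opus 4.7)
The plan is to follow the template of the proof of Theorem~\ref{thm:iCB:ev} step by step, swapping in the odd-weight analogues of every ingredient.

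Since $\vodd$ is a highest weight vector and $\Y = q^{-1}EK^{-1}$, one has $\Y\vodd = 0$. Substituting this into the $F\kk\Y$-expansions of Proposition~\ref{iDP:odd:FkY} collapses each double sum to a single sum over $c$, since only the terms with the maximal inner index $a$ (namely $a = 2m-2c$ or $a = 2m+1-2c$) survive:
\begin{align*}
\dv{2m}\vodd &= \sum_{c=0}^{m} (-1)^c\, q^{-c}\, F^{(2m-2c)}\, \LR{\kk;1+m-c}{c}\,\vodd,\\
\dv{2m+1}\vodd &= \sum_{c=0}^{m} (-1)^c\, q^{c}\, F^{(2m+1-2c)}\, \LR{\kk;1+m-c}{c}\,\vodd.
\end{align*}
Because $\vodd = v^+_{2(\la+1)-1}$, formula \eqref{kqbinom2} evaluates $\LR{\kk;1+m-c}{c}\vodd$ as the scalar $q^{2c(m-c-\la)}\qbinom{m-\la-1}{c}_{q^2}\vodd$, rendering each expansion as an explicit $\Q(q)$-linear combination of the canonical basis $\{F^{(k)}\vodd\}$ of $L(2\la+1)$.

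From here the argument mimics the even case. For $m \ge \la+1$ (where the theorem demands $\dv{2m}\vodd = \dv{2m+1}\vodd = 0$), the index set $\{0,\dots,m\}$ partitions into $c \le m-\la-1$, for which $F^{(2m-2c)}\vodd = F^{(2m+1-2c)}\vodd = 0$ because the $F$-power exceeds $2\la+1$, and $c \ge m-\la$, for which $\qbinom{m-\la-1}{c}_{q^2} = 0$; so every summand vanishes. For $m \le \la$ we apply the negation identity $\qbinom{m-\la-1}{c}_{q^2} = (-1)^c \qbinom{\la-m+c}{c}_{q^2}$, which cancels the $(-1)^c$ prefactor and yields a manifestly positive $q^2$-binomial. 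A direct bookkeeping then shows the coefficient of $F^{(n-2c)}\vodd$ in $\dv{n}\vodd$ has top $q$-power $-c(2c+1)$ when $n=2m$ and $-c(2c-1)$ when $n=2m+1$, both $\le -1$ for $c \ge 1$, hence lies in $q^{-1}\N[q^{-1}]$; for $c=0$ the coefficient is $1$. Since $\dv{n}\vodd$ is $\psi_\imath$-invariant by construction, the characterizing properties (1)--(2) of the $\imath$-canonical basis of $L(2\la+1)$ are satisfied, proving part (1). Part (2) then follows from part (1) via the projective system of $\Ui$-homomorphisms $L(2\la+3) \twoheadrightarrow L(2\la+1)$, exactly as in the final step of the proof of Theorem~\ref{thm:iCB:ev} (cf.\ \cite[\S 6]{BW16}).

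Essentially no novel obstacle arises once the expansion above is in hand; the only point demanding care is the sign-and-$q$-power juggling in the case $m \le \la$. In particular, for the odd case $n = 2m+1$ the top $q$-power at $c=1$ is only $q^{-1}$ (rather than the $q^{-3}$ one gets for $n=2m$), so the verification that every off-diagonal coefficient lies in $q^{-1}\N[q^{-1}]$ is as sharp as possible and should be highlighted.
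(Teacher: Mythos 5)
Your proposal is correct and follows essentially the same route as the paper: specialize the $F\kk\Y$-expansion of Proposition~\ref{iDP:odd:FkY} at $\vodd$ (so only the $a$-maximal terms survive), observe the vanishing for $m\ge\la+1$ from the two complementary index ranges, and check uni-triangularity with off-diagonal coefficients in $q^{-1}\N[q^{-1}]$ before invoking the characterization of the $\imath$-canonical basis and the projective system $\{L(2\la+1)\}$. The only difference is notational --- you phrase the coefficients via $\qbinom{m-\la-1}{c}_{q^2}$ and a negation identity where the paper uses $\cbinom{m-\la-c}{c}$, and your degree bounds $-c(2c+1)$ and $-c(2c-1)$ agree with the paper's $-2c^2\mp c$.
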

 
 \begin{proof}
Let $\la, m \in \N$.  It follows by a direct computation using Proposition~\ref{iDP:odd:FkY} that 
\begin{align} 
\dv{2m} \vodd 
& = \sum_{c=0}^m q^{-2c^2-c} \cbinom{m-\la-c}{c} F^{(2m-2c)}  \vodd;
  \label{dv:2m-mod} \\
\dv{2m+1} \vodd 
& = \sum_{c=0}^m q^{-2c^2+c} \cbinom{m-\la-c}{c} F^{(2m+1-2c)}  \vodd.
  \label{dv:2m+1-mod}
\end{align}
We observe that 
\begin{align}  \label{odd=0}
\begin{split}
  F^{(2m-2c)}  \vodd = F^{(2m+1-2c)}  \vodd =0, & \quad \text{ if } c<m-\la.
  \end{split}
\end{align}
It follows from \eqref{cbin=0}, \eqref{dv:2m-mod}, \eqref{dv:2m+1-mod} and \eqref{odd=0}  that $\dv{2m} \vodd =\dv{2m+1} \vodd =0$, for $m\ge \la+1$;
moreover, for $m\le \la$, we have
 \begin{align*}
 \dv{2m} \vodd  & \in F^{(2m)}  \vodd + \sum_{c\ge 1}q^{-1} \N[q^{-1}] F^{(2m-2c)} \vodd,
 \\
 \dv{2m+1} \vodd  & \in F^{(2m+1)}  \vodd + \sum_{c\ge 1}q^{-1} \N[q^{-1}] F^{(2m+1-2c)} \vodd.
 \end{align*}
 
 Therefore, the first statement follows by the characterization of the $\imath$-canonical basis for $L(2\la+1)$.
 The second statement follows now from the definition of the $\imath$-canonical basis for $\Ui$ using
the projective system $\{ L(2\la+1) \}_{\la\in \N}$; cf. \cite[\S6]{BW16} (also cf. \cite[\S4]{BW13}). 
 \end{proof}

%
%
\subsection{Proof of Theorem~\ref{thm:iDP:odd} }
\label{sec:proof:iDP:odd}

We prove the formulae for $\dv{n}$ by induction on $n$. The base case for $n=1$ is clear.
The induction is carried out in 2 steps. 

\vspace{.3cm}
(1) First by assuming the formula for $\dv{2m}$ in \eqref{todd2m}, we shall establish the formula \eqref{todd2m+1} 
for $\dv{2m+1}$, via the identity $[2m+1] \dv{2m+1} = \mathfrak{t} \cdot \dv{2m}$ in \eqref{eq:ttodd2}. 

Recall the formula \eqref{todd2m} for $\dv{2m}$.
Using $\mathfrak{t}=\Y+F$ and applying \eqref{FYn} to $F\Y^{(a)}$ we have
\begin{align}  \label{ttodd2m}
\mathfrak{t}\cdot \dv{2m} 
&= \sum_{c=0}^{m} \sum_{a=0}^{2m-2c} q^{\binom{2c}{2} -a(2m-2c-a)}  \mathfrak{t} \Y^{(a)}  \LR{\kk;1-m}{c} F^{(2m-2c-a)}
\\
&= \sum_{c=0}^{m} \sum_{a=0}^{2m-2c} q^{\binom{2c}{2} -a(2m-2c-a)} \cdot 
\notag \\
&\qquad \quad \left( \Y \Y^{(a)}  +q^{-2a} \Y^{(a)} F  +\Y^{(a-1)} \frac{q^{3-3a} K^{-2} - q^{1-a} }{q^2-1} \right) \LR{\kk;1-m}{c} F^{(2m-2c-a)}  
 \notag \\
&= \sum_{c=0}^{m} \sum_{a=0}^{2m-2c} q^{\binom{2c}{2} -a(2m-2c-a)} \cdot 
\notag \\
& \left( [a+1] \Y^{(a+1)}  \LR{\kk;1-m}{c} F^{(2m-2c-a)}
 +q^{-2a} [2m+1-2c-a] \Y^{(a)}  \LR{\kk;-m}{c} F^{(2m+1-2c-a)} \right.
 \notag \\
& \qquad +
\left. \Y^{(a-1)} \frac{q^{3-3a} K^{-2} - q^{1-a} }{q^2-1} \LR{\kk;1-m}{c} F^{(2m-2c-a)} \right). 
\notag
\end{align}

We  reorganize the formula \eqref{ttodd2m} in the following form
\[
[2m+1] \dv{2m+1} = \mathfrak{t} \cdot \dv{2m} =  \sum_{c=0}^m \sum_{a=0}^{2m+1-2c} \Y^{(a)} \texttt{f}_{a,c}(\kk) F^{(2m+1-2c-a)}, 
\]
where
\begin{align*}
\texttt{f}_{a,c}(\kk) &= q^{\binom{2c}{2} -(a-1)(2m+1-2c-a)}   [a]\,  \LR{\kk;1-m}{c}\\
&\quad + \left( q^{\binom{2c}{2} -a(2m-2c-a) -2a} [2m+1-2c-a]\, \LR{\kk;-m}{c}\right. \\
&\qquad\quad \left. +q^{\binom{2c-2}{2} -(a+1)(2m+1-2c-a)}  \frac{q^{-3a} K^{-2} - q^{-a} }{q^2-1} \LR{\kk;1-m}{c-1}\right).
\end{align*}
A direct computation gives us
\begin{align*}
\texttt{f}_{a,c}(\kk) &=  q^{\binom{2c-1}{2} -1 - a(2m+1-2c-a)} q^{2m+1-a}  [a] \,  \LR{\kk;1-m}{c}+q^{\binom{2c-1}{2} -1 - a(2m+1-2c-a)} \cdot  \\
&\quad\qquad \cdot \left( q^{2c-a}[2m+1-2c-a] \frac{q^{-4m} K^{-2}-q^2}{q^{4c}-1}  +q^{2+a-2m} \frac{q^{-3a} K^{-2}-q^{-a}}{q^{2}-1}  \right)  \LR{\kk;1-m}{c-1}
\\
&=  q^{\binom{2c-1}{2} -1 - a(2m+1-2c-a)} q^{2m+1-a}  [a] \,  \LR{\kk;1-m}{c}
\\
&\qquad +q^{\binom{2c-1}{2} -1 - a(2m+1-2c-a)} q^{-a} [2m+1-a] \,  \LR{\kk;1-m}{c}
\\
&=  q^{\binom{2c-1}{2} -1 - a(2m+1-2c-a)}  [2m+1]\,  \LR{\kk;1-m}{c}.
\end{align*}
Hence we have obtained the formula \eqref{todd2m+1}  for $\dv{2m+1}$.

\vspace{.3cm}
(2) 
Now by assuming the formula for $\dv{2m+1}$ in \eqref{todd2m+1}, we shall establish the following formula (with $m$ in \eqref{todd2m} replaced by $m+1$)
\begin{align}
\dv{2m+2} &=  \sum_{c=0}^{m+1} \sum_{a=0}^{2m+2-2c} q^{\binom{2c}{2} -a(2m+2-2c-a)} \Y^{(a)}  \LR{\kk;-m}{c} F^{(2m+2-2c-a)}. 
\label{todd2m+2}
\end{align}

Recall the formula \eqref{todd2m+1} for $\dv{2m+1}$.
Using  $\mathfrak{t}=\Y+F$ and applying \eqref{FYn} to $F\Y^{(a)}$ we have
\begin{align*}
\mathfrak{t} \cdot \dv{2m+1} &=   \sum_{c=0}^m \sum_{a=0}^{2m+1-2c} q^{\binom{2c-1}{2}-1 -a(2m+1-2c-a)}   \mathfrak{t} \Y^{(a)}  \LR{\kk;1-m}{c} F^{(2m+1-2c-a)} \\
&=  \sum_{c=0}^m \sum_{a=0}^{2m+1-2c} q^{\binom{2c-1}{2}-1 -a(2m+1-2c-a)}  \cdot \notag
 \\
&\qquad \cdot \left( \Y \Y^{(a)}   +q^{-2a} \Y^{(a)} F   +\Y^{(a-1)} \frac{q^{3-3a} K^{-2} - q^{1-a} }{q^2-1} \right) \LR{\kk;1-m}{c} F^{(2m+1-2c-a)}.
\end{align*}
We rewrite this as
\begin{align}  \label{ttodd2m+1}
\mathfrak{t} \cdot \dv{2m+1}
 &=  \sum_{c=0}^m \sum_{a=0}^{2m+1-2c} q^{\binom{2c-1}{2}-1 -a(2m+1-2c-a)} \cdot 
  \left ( [a+1] \Y^{(a+1)} \LR{\kk;1-m}{c} F^{(2m+1-2c-a)} \right.  \\
 &\qquad\qquad\qquad 
 +q^{-2a} [2m+2-2c-a] \Y^{(a)}  \LR{\kk;-m}{c} F^{(2m+2-2c-a)}  
 \notag \\
& \qquad\qquad\qquad  \left. +\Y^{(a-1)} \frac{q^{3-3a} K^{-2} - q^{1-a} }{q^2-1} \LR{\kk;1-m}{c} F^{(2m+1-2c-a)} \right).
\notag
\end{align}
We shall use \eqref{eq:ttodd2}, \eqref{ttodd2m+1} and \eqref{todd2m} to obtain a formula of the form
\begin{equation} \label{eq:ttoddsum}
[2m+2] \dv{2m+1} = \mathfrak{t}\cdot \dv{2m+1} -[2m+1] \dv{2m}
  =   
  \sum_{c=0}^{m+1} \sum_{a=0}^{2m+2-2c} \Y^{(a)} \texttt{g}_{a,c}(\kk) F^{(2m+2-2c-a)}, 
\end{equation}
for some suitable $\texttt{g}_{a,c}(\kk)$. Then we have
\begin{align*}
\texttt{g}_{a,c}(\kk) &= q^{\binom{2c-1}{2}-1 -(a-1)(2m+2-2c-a)}   [a]\,  \LR{\kk;1-m}{c}\\
&\quad +  q^{\binom{2c-1}{2}-1 -a(2m+1-2c-a) -2a} [2m+2-2c-a]\, \LR{\kk;-m}{c} \\
&\quad   +q^{\binom{2c-3}{2}-1 -(a+1)(2m+2-2c-a)}  \frac{q^{-3a} K^{-2} - q^{-a} }{q^2-1} \LR{\kk;1-m}{c-1}
\\
&\quad   -q^{\binom{2c-2}{2} - a(2m+2-2c-a)} [2m+1] \LR{\kk;1-m}{c-1}
\\
&= q^{\binom{2c}{2} - a(2m+2-2c-a)}  q^{-2c-a} [2m+2-2c-a]\, \LR{\kk;-m}{c}  + q^{\binom{2c}{2} - a(2m+2-2c-a)}  \texttt{X}, 
\end{align*}
where  
\begin{align*}
\texttt{X}&=q^{2m+2-4c-a}   [a]\,  \LR{\kk;1-m}{c}
\\
&\qquad +q^{-2m+3-4c+a} \frac{q^{-3a} K^{-2} - q^{-a} }{q^2-1} \LR{\kk;1-m}{c-1}
-q^{3-4c} [2m+1] \LR{\kk;1-m}{c-1}.
\end{align*}
A direct computation allows us to simplify the expression for $\texttt{X}$ as follows:
\begin{align*}
\texttt{X}&= \left( q^{2m+2-4c-a}   [a]  \frac{q^{4c-4m} K^{-2} - q^2}{q^{4c}-1}
 +q^{-2m+3-4c+a} \frac{q^{-3a} K^{-2} - q^{-a} }{q^2-1}  -q^{3-4c} [2m+1]
\right) \LR{\kk;1-m}{c-1}
\\
&= q^{2m+2-2c-a}  [2c+a]   \frac{q^{-4m} K^{-2} - q^2}{q^{4c}-1}   \LR{\kk;1-m}{c-1}
\\
&= q^{2m+2-2c-a}  [2c+a]    \LR{\kk;-m}{c}. 
\end{align*}

Hence, we  obtain
\begin{align*}
\texttt{g}_{a,c}(\kk) &= q^{\binom{2c}{2} - a(2m+2-2c-a)}  q^{-2c-a} [2m+2-2c-a]\, \LR{\kk;-m}{c}
\\
&\qquad + q^{\binom{2c}{2} - a(2m+2-2c-a)}  q^{2m+2-2c-a}  [2c+a]    \LR{\kk;-m}{c}
\\ &=  q^{\binom{2c}{2} - a(2m+2-2c-a)}  [2m+2]\,  \LR{\kk;-m}{c}.
\end{align*}
Recalling the identity \eqref{eq:ttoddsum}, we have proved the formula \eqref{todd2m+2}  for $\dv{2m+2}$, and
hence completed the proof of Theorem~\ref{thm:iDP:odd}. 
%

\section{Formulae for $\imath$-divided powers $\dvp{n}$ and $\imath$-canonical basis on $L(2\la)$}
  \label{sec:dvK:even}

In this section we formulate a variant of $\imath$-divided powers (of even weights) starting with $t$ in \eqref{def:t} below, and established closed formulae for it in $\U$.

\subsection{The $\imath$-divided powers $\dvp{n}$}

Set 
\begin{equation}  \label{def:t}
t =\Y+F+K^{-1}.
\end{equation}
That is, $t= \mathfrak{t} +K^{-1}$. Set $\dvp{0}=1, \dvp{1}=t$. 
The $\imath$-divided powers $\dvp{n}$ (of even weights) are determined by the  recursive relations \eqref{eq:ttodd2:dvp}. 
Note $\dvp{2} =(t^2-1)/[2]$ and $\dvp{3} =t(t^2-1)/[3]!$. 
Note that $\dvp{n}$ satisfy the same recursive relations \eqref{eq:ttodd2} for $\dv{n}$ and so are given by the same formula \eqref{eq:dv} (with $\mathfrak{t}$ replaced by $t$).

\subsection{The $\Y \kk F$-formula for $\dvp{n}$}

Recall $\qbinom{\kk;a}{n}$ from \eqref{kbinom}. 

\begin{thm}
  \label{thm:iDPK}
For $m\ge 0$, we have 
\begin{align}
\dvp{2m} &= \sum_{c=0}^m \sum_{a=0}^{2m-2c} q^{\binom{2c+1}{2} -a(2m-2c-a)}
\Y^{(a)}  \qbinom{\kk;1-m}{c}  F^{(2m-2c-a)}
\label{dvp2m}
\\ 
&\qquad 
+ \sum_{c=0}^{m-1} \sum_{a=0}^{2m-1-2c} q^{\binom{2c+2}{2}-2m -a(2m-1-2c-a)} \Y^{(a)}  \qbinom{\kk;1-m}{c}K^{-1}  F^{(2m-1-2c-a)}, 
\notag \\
\dvp{2m+1} &=  \sum_{c=0}^{m} \sum_{a=0}^{2m+1-2c} q^{\binom{2c}{2} -a(2m+1-2c-a)} \Y^{(a)}  \qbinom{\kk; 1-m}{c}  F^{(2m+1-2c-a)}
\label{dvp2m+1}
\\
& \qquad
+\sum_{c=0}^m \sum_{a=0}^{2m-2c} q^{\binom{2c+1}{2}-2m -a(2m-2c-a)} \Y^{(a)}  \qbinom{\kk; 1-m}{c} K^{-1}  F^{(2m-2c-a)}. 
\notag
\end{align}
\end{thm}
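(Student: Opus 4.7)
The plan is to induct on $n$ using the recursions \eqref{eq:ttodd2:dvp}, in the same two-step alternating pattern as in \S\ref{sec:proof:iDP} and \S\ref{sec:proof:iDP:odd}. The base cases $n=0,1$ are immediate: $\dvp{0}=1$ and $\dvp{1}=t=\Y+F+K^{-1}$ match \eqref{dvp2m+1} at $m=0$. At stage $m$, I would first assume \eqref{dvp2m} for $\dvp{2m}$ and derive \eqref{dvp2m+1} for $\dvp{2m+1}$ from $[2m+1]\dvp{2m+1}=t\cdot\dvp{2m}$; then I would assume \eqref{dvp2m+1} for $\dvp{2m+1}$ and derive \eqref{dvp2m} with $m$ replaced by $m+1$ from $[2m+2]\dvp{2m+2}=t\cdot\dvp{2m+1}-[2m+1]\dvp{2m}$.

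The new ingredient compared with the earlier sections is that $t=\mathfrak{t}+K^{-1}$, so each multiplication by $t$ decomposes as an $\mathfrak{t}$-part plus a $K^{-1}$-part. The $\mathfrak{t}$-part is processed exactly as in \S\ref{sec:proof:iDP}, using \eqref{FYn} together with $\Y\Y^{(a)}=[a+1]\Y^{(a+1)}$ and $\qbinom{\kk;a}{c}F=F\qbinom{\kk;a+1}{c}$ from \eqref{FEk}. The $K^{-1}$-part is handled via the elementary weight-shifting commutations
\[
K^{-1}\Y^{(a)}=q^{-2a}\Y^{(a)}K^{-1},\qquad K^{-1}F^{(b)}=q^{2b}F^{(b)}K^{-1},\qquad K^{-1}\qbinom{\kk;a}{c}=\qbinom{\kk;a}{c}K^{-1},
\]
the first two of which follow from the defining relations \eqref{sl2}, and the third from the fact that $\qbinom{\kk;a}{c}$ is a polynomial in $K^{-2}$. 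Because each $\dvp{n}$ decomposes into a no-$K^{-1}$ piece and a $K^{-1}$-piece, I would reduce both sides of each recurrence to the normal form
\[
\sum \Y^{(a)} p_c(\kk) F^{(b)} \;+\; \sum \Y^{(a)} \tilde{p}_c(\kk) K^{-1} F^{(b)},
\]
and match the two kinds of coefficients separately.

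The main obstacle will be the $q$-power bookkeeping. The no-$K^{-1}$ piece essentially reproduces the identity already verified in \S\ref{sec:proof:iDP}, collapsing via the same $q^{\ast}[a]\qbinom{\kk;\cdot}{c}+q^{\ast\ast}[\text{complement}-a]\qbinom{\kk;\cdot}{c}=q^{\ast\ast\ast}[2m+1]\qbinom{\kk;\cdot}{c}$ pattern. For the $K^{-1}$-piece there is a cross-term subtlety: $\mathfrak{t}$ acting on the $K^{-1}$-sum of the inductive hypothesis produces terms still carrying $K^{-1}$, while commuting $K^{-1}$ past $F^{(b)}$ and $\Y^{(a)}$ in the $\mathfrak{t}$-part of the inductive hypothesis also contributes to this piece, and these two contributions must telescope into the claimed exponents $\binom{2c+2}{2}-2m-a(2m-1-2c-a)$ and $\binom{2c+1}{2}-2m-a(2m-2c-a)$ via one further $[a]+[\text{complement}-a]=[2m+1]$ (respectively $[2m+2]$) identity, using the defining recursion $\qbinom{\kk;a}{c}=\frac{q^{4a+4c-4}K^{-2}-1}{q^{4c}-1}\qbinom{\kk;a}{c-1}$. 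Once these two coefficient identities are isolated, each is elementary, and the induction closes; I expect the overall argument to be structurally parallel to, but somewhat lengthier than, \S\ref{sec:proof:iDP}.
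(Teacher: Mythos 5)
Your proposal follows essentially the same route as the paper's proof in \S\ref{sec:proof:iDPK}: the same two-step induction on the recursions \eqref{eq:ttodd2:dvp}, the same splitting of $t\cdot\dvp{n}$ according to whether a single $K^{-1}$ survives (the paper's $S_0+S_1\mapsto (A_0+B_0)+(A_1+B_1)$ bookkeeping), and the same separate matching of the two kinds of coefficients. The one small imprecision is your claim that the $K^{-1}$-free piece ``essentially reproduces'' the identity of \S\ref{sec:proof:iDP}: it does not quite, since $K^{-1}$ (from $t$) acting on the $K^{-1}$-sum of the inductive hypothesis produces $K^{-2}$ terms that land in the $K^{-1}$-free piece and must be absorbed into the $\qbinom{\kk;1-m}{c}$ coefficient before the $[a]$-plus-complement collapse --- but your normal form with $p_c(\kk)$ a polynomial in $K^{-2}$ accommodates exactly this, so the plan closes as stated.
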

The proof of this theorem will be given in \S \ref{sec:proof:iDPK} below.
It follows from Theorem~\ref{thm:iDPK} that $\dvp{n} \in \BA$ for all $n$.

\begin{example}
Recall $b^{(n)}$ from \eqref{eq:divB}. Here are some examples of $\dvp{n}$, for $2\le n\le 4$:
\begin{align*}
\dvp{2} 
&= b^{(2)} +q^{-1}K^{-1}F +q^{-1}\Y K^{-1}  + q^3 [\kk;0],
\\
 \dvp{3} 
&=  b^{(3)} + q \Y [\kk;0] + q [\kk;0] F 
+ q^{-2} \Y^{(2)} K^{-1} + q^{-3} \Y K^{-1} F + q^{-2} K^{-1} F^{(2)} + q [\kk;0] K^{-1},
\\
 \dvp{4} 
 & = b^{(4)} + q^3 \Y^{(2)} [\kk;-1] + q^2 \Y [\kk;-1] F + q^3 [\kk;-1] F^{(2)} + q^{10} \qbinom{\kk;-1}{2}    
 \\ &  
\qquad + q^{-3} \Y^{(3)} K^{-1}
+ q^{-5} \Y^{(2)} K^{-1} F + q^{-5} \Y K^{-1} F^{(2)} + q^{-3} K^{-1} F^{(3)}
\\
&\qquad + q^2 \Y [\kk;-1] K^{-1} + q^2 [\kk;-1] K^{-1}F.
\end{align*}
\end{example}

\subsection{Some reformulations}

The next $F\kk\Y$-formulae follow easily 
from the formulae in Theorem~\ref{thm:iDPK} by applying the anti-involution $\vs$ in Lemma~\ref{lem:anti}.
The proof is skipped.

\begin{prop}
  \label{iDPK:FkY}
For $m\ge 0$, we have 
\begin{align*}
\dvp{2m} &= \sum_{c=0}^m \sum_{a=0}^{2m-2c} (-1)^c q^{c+ a(2m-2c-a)} F^{(a)}  \qbinom{\kk;m-c}{c}  \Y^{(2m-2c-a)}
\\ 
&\qquad 
+ \sum_{c=0}^{m-1} \sum_{a=0}^{2m-1-2c} (-1)^c q^{-c-1+ 2m +a(2m-1-2c-a)} F^{(a)}  \qbinom{\kk;m-c}{c}K^{-1}  \Y^{(2m-1-2c-a)}, 
\\%
\dvp{2m+1} &=  \sum_{c=0}^{m} \sum_{a=0}^{2m+1-2c} (-1)^c q^{3c+ a(2m+1-2c-a)} F^{(a)}  \qbinom{\kk; m-c}{c}  \Y^{(2m+1-2c-a)}
\\
& \qquad
+\sum_{c=0}^m \sum_{a=0}^{2m-2c} (-1)^c q^{c+ 2m +a(2m-2c-a)} F^{(a)}  \qbinom{\kk; m-c}{c} K^{-1}  \Y^{(2m-2c-a)}. 
\end{align*}
\end{prop}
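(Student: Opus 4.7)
The plan is to derive Proposition~\ref{iDPK:FkY} from Theorem~\ref{thm:iDPK} by applying the anti-involution $\vs$ from \eqref{eq:vs}, following exactly the template used to pass from Theorem~\ref{thm:iDP} to Proposition~\ref{iDP:FkY}.

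First I would verify that $\vs(\dvp{n}) = \dvp{n}$. Since $\vs$ fixes $\Y$, $F$, $K^{-1}$, and sends $q \mapsto q^{-1}$ (hence fixes every $[n]$), we have $\vs(t) = t$. Because $\dvp{n}$ is a polynomial in $t$ with coefficients that are rational functions in $q$-integers (visible from the recursion \eqref{eq:ttodd2:dvp}, or equivalently from the product formula \eqref{eq:dv} with $\mathfrak{t}$ replaced by $t$), an immediate induction on $n$ shows $\vs(\dvp{n}) = \dvp{n}$. This handles the LHS of each identity in Theorem~\ref{thm:iDPK} after applying $\vs$.

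Next I would apply $\vs$ termwise to the RHS. Since $\vs$ reverses multiplication and fixes $\Y^{(a)}$, $F^{(a)}$, $K^{-1}$, each summand $q^{\alpha} \Y^{(a)} \qbinom{\kk;1-m}{c} F^{(2m-2c-a)}$ is sent, via Lemma~\ref{lem:anti}, to $(-1)^c q^{-\alpha + 2c(c+1)} F^{(2m-2c-a)} \qbinom{\kk;m-c}{c} \Y^{(a)}$, and the summands carrying $K^{-1}$ transform in the same manner, using that $K^{-1}$ commutes with $\qbinom{\kk;a}{n}$ (both being polynomials in $K^{\pm 1}$ alone) so that $K^{-1}$ can be kept sandwiched between $\qbinom{\kk;m-c}{c}$ and $\Y^{(a)}$. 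A reindexing $a \mapsto 2m-2c-a$ (respectively $a \mapsto 2m-1-2c-a$ and $a \mapsto 2m+1-2c-a$ in the remaining sums) then puts the result into the stated $F\kk\Y$-form.

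The only substantive work is exponent bookkeeping, which I expect to be the main (but entirely routine) obstacle. For example, in the first sum of \eqref{dvp2m} one needs $-\binom{2c+1}{2} + 2c(c+1) = c$, matching $q^c$; in the $K^{-1}$-sum of \eqref{dvp2m} one needs $-\binom{2c+2}{2} + 2m + 2c(c+1) = 2m - c - 1$, matching $q^{-c-1+2m}$; and the two sums in \eqref{dvp2m+1} give the exponents $3c$ and $c + 2m$ by the analogous arithmetic. Since no new structural input is required beyond Lemma~\ref{lem:anti} and the $\vs$-invariance of $\dvp{n}$, it is justified — as in the parallel Proposition~\ref{iDP:FkY} — to state the result and omit the calculation.
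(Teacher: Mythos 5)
Your proposal is correct and is precisely the argument the paper intends (the paper states that the proposition "follows easily from Theorem~\ref{thm:iDPK} by applying the anti-involution $\vs$" and omits the details, which are the same $\vs$-invariance of $\dvp{n}$, the transformation rule for $\qbinom{\kk;a}{n}$ from Lemma~\ref{lem:anti}, and the reindexing $a\mapsto 2m-2c-a$ etc.\ that you describe). Your exponent checks, e.g.\ $-\binom{2c+1}{2}+2c(c+1)=c$ and $-\binom{2c+2}{2}+2c(c+1)=-c-1$, are all accurate, so nothing further is needed.
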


Below is a modified reformulation of Theorem~\ref{thm:iDPK}.

\begin{prop}
  \label{prop:iDPKdot}
For $m\ge 0$ and $\la \in\Z$, we have 
\begin{align*}
&\dvp{2m}  \one_{2\la} 
\\
&\quad = \sum_{c=0}^m \sum_{a=0}^{2m-2c} q^{2(a+c)(m-a-\la)-2ac-\binom{2c}{2}} \qbinom{m-c-a-\la}{c}_{q^2} E^{(a)}  F^{(2m-2c-a)}  \one_{2\la}
\\ 
&\qquad 
+ \sum_{c=0}^{m-1} \sum_{a=0}^{2m-1-2c} q^{2(a+c+1)(m-a-\la) -2ac-a-\binom{2c+2}{2}} 
\qbinom{m-c-a-\la-1}{c}_{q^2} E^{(a)} F^{(2m-1-2c-a)}  \one_{2\la}; 
\\
& \dvp{2m+1}  \one_{2\la} 
\\
&\quad =  \sum_{c=0}^{m} \sum_{a=0}^{2m+1-2c} 
q^{2(a+c)(m-a-\la)-2ac+a-\binom{2c}{2}} \qbinom{m-c-a-\la+1}{c}_{q^2} E^{(a)} F^{(2m+1-2c-a)}  \one_{2\la}
\\
& \qquad
+\sum_{c=0}^m \sum_{a=0}^{2m-2c} 
q^{2(a+c+1)(m-a-\la)-2ac-\binom{2c+2}{2}+1} \qbinom{m-c-a-\la}{c}_{q^2} E^{(a)} F^{(2m-2c-a)}  \one_{2\la}. 
\end{align*}  
In particular, we have $\dvp{n}\one_{2\la} \in \UAdot_\ev$, for all $n\in \N, \la \in\Z$. 
\end{prop}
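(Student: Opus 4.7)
The plan is to imitate the computation used in the proofs of Proposition~\ref{prop:iDPdot} and Proposition~\ref{iDP:odd:dot}, applying it termwise to each of the two summations appearing in Theorem~\ref{thm:iDPK}. The only new feature relative to those earlier proofs is the extra $K^{-1}$ appearing in the second summation in each of the formulae for $\dvp{2m}$ and $\dvp{2m+1}$, which contributes one additional $q$-power when evaluated on an idempotent.

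For each summand of Theorem~\ref{thm:iDPK}, I would proceed in the following order. First, push $F^{(2m-2c-a)}\one_{2\la}$ (respectively $F^{(2m-1-2c-a)}\one_{2\la}$, etc.) to the right using the relation $F^{(n)}\one_{2\la} = \one_{2\la-2n}F^{(n)}$, producing an idempotent with weight determined by the exponents of $F$. Next, apply the identity $\qbinom{\kk;1-m}{c}\one_{2\mu} = q^{-2c(m+\mu)}\qbinom{c-m-\mu}{c}_{q^2}\one_{2\mu}$, which is just \eqref{k=qbinom} specialized to $a=1-m$. In the two summations that carry an explicit $K^{-1}$, evaluate $K^{-1}\one_{2\mu} = q^{-2\mu}\one_{2\mu}$ as well. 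Finally, rewrite $\Y^{(a)}$ using Lemma~\ref{dpY} as $q^{-a^2}E^{(a)}K^{-a}$ and pass $K^{-a}$ through the remaining idempotent, producing a further factor $q^{-2a\mu}$. Collect all four (or five, in the $K^{-1}$ cases) $q$-power contributions and verify algebraically that they sum to the exponent stated in the proposition, and that the $q^2$-binomial coefficient reduces to the indicated $\qbinom{m-c-a-\la}{c}_{q^2}$, $\qbinom{m-c-a-\la-1}{c}_{q^2}$, $\qbinom{m-c-a-\la+1}{c}_{q^2}$, or $\qbinom{m-c-a-\la}{c}_{q^2}$ as appropriate.

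Concretely, for the second summation in $\dvp{2m}\one_{2\la}$ the accumulated exponent is
\[
\tbinom{2c+2}{2}-2m-a(2m-1-2c-a) - 2(\la-2m+1+2c+a) + 2c(m-\la-1-2c-a) - a^2 - 2a(\la-2m+1+2c+a),
\]
which after expansion collapses to $2(a+c+1)(m-a-\la)-2ac-a-\binom{2c+2}{2}$. The other three summations are handled by the same sort of bookkeeping. Once all four pieces have been matched against the claimed formulae, the containment $\dvp{n}\one_{2\la}\in\UAdot_\ev$ is immediate from the fact that $\qbinom{\cdot}{c}_{q^2}\in\A$ and that every summand has been expressed as a scalar in $\A$ times an element of $\UAdot$ of the form $E^{(a)}F^{(b)}\one_{2\la}$.

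The main obstacle is purely computational: one must keep careful track of the weight $\mu$ of the intermediate idempotent in each of the two $K^{-1}$-containing sums, since an off-by-one error in $\mu$ shifts every subsequent $q$-exponent by a multiple of $c$ or $a$. No conceptual difficulty is expected beyond the bookkeeping, since all the tools, namely Lemma~\ref{dpY}, the evaluation \eqref{k=qbinom}, and the template computation worked out in detail in Proposition~\ref{prop:iDPdot}, are already in place and directly applicable.
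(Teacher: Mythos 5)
Your proposal is correct and follows essentially the same route as the paper's own proof: termwise evaluation of the four sums in Theorem~\ref{thm:iDPK}, moving the idempotent through $F^{(\cdot)}$, applying \eqref{k=qbinom} and Lemma~\ref{dpY}, and absorbing the extra $q$-power from $K^{-1}\one_{2\mu}=q^{-2\mu}\one_{2\mu}$. Your sample exponent computation for the second sum of $\dvp{2m}\one_{2\la}$ checks out against the stated formula, so only the analogous bookkeeping for the remaining three sums is left to write out.
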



\begin{proof}
Note $F^{(2m-2c-a)}  \one_{2\la} = \one_{2(\la-2m+2c+a)} F^{(2m-2c-a)}$, cf. \eqref{dvp2m}. 
Using \eqref{k=qbinom} and Lemma~\ref{dpY}, we compute
\begin{align*}
&q^{\binom{2c+1}{2} -a(2m-2c-a)}
\Y^{(a)} \qbinom{\kk;1-m}{c}  \one_{2(\la-2m+2c+a)}
\\
&= q^{\binom{2c+1}{2} -a(2m-2c-a)} 
q^{2a(2m-\la-2c-a) -a^2}
q^{2c(m-2c-a-\la)} \qbinom{m-c-a-\la}{c}_{q^2} E^{(a)} \one_{2(\la-2m+2c+a)}
 \\
&= q^{2(a+c)(m-a-\la)-2ac-\binom{2c}{2}} \qbinom{m-c-a-\la}{c}_{q^2} E^{(a)} \one_{2(\la-2m+2c+a)}.
\end{align*}
Similarly  we have
\begin{align*}
&q^{\binom{2c+2}{2}-2m -a(2m-1-2c-a)} \Y^{(a)} \qbinom{\kk;1-m}{c}K^{-1}  \one_{2(\la-2m+2c+a+1)}
\\
&= q^{\binom{2c+2}{2}-2m -a(2m-1-2c-a)} 
q^{-2(a+1)(\la-2m+2c+a+1) -a^2} \cdot
\\
&\qquad \quad q^{2c(m-2c-a-\la-1)} \qbinom{m-c-a-\la-1}{c}_{q^2} E^{(a)} \one_{2(\la-2m+2c+a+1)}
 \\
&= q^{2(a+c+1)(m-a-\la) -2ac-a-\binom{2c+2}{2}} \qbinom{m-c-a-\la-1}{c}_{q^2} E^{(a)} \one_{2(\la-2m+2c+a+1)}.
\end{align*}
This establishes the first formula.

Similarly, the second formula follows from \eqref{dvp2m+1},  \eqref{k=qbinom}  and the following computations:
\begin{align*}
&q^{\binom{2c}{2} -a(2m+1-2c-a)} \Y^{(a)} \qbinom{\kk; 1-m}{c} \one_{2(\la-2m+2c+a-1)}
\\
&\quad
= q^{\binom{2c}{2} -a(2m+1-2c-a)} 
q^{2a(2m-\la-2c-a+1)-a^2} \cdot
\\
&\qquad \qquad
q^{2c(m-2c-a-\la+1)} \qbinom{m-c-a-\la+1}{c}_{q^2} E^{(a)} \one_{2(\la-2m+2c+a-1)}
 \\
&\quad
= q^{2(a+c)(m-a-\la)-2ac+a-\binom{2c}{2}} \qbinom{m-c-a-\la+1}{c}_{q^2} E^{(a)} \one_{2(\la-2m+2c+a-1)};
\\ \\
&q^{\binom{2c+1}{2}-2m -a(2m-2c-a)} \Y^{(a)} \qbinom{\kk; 1-m}{c} K^{-1} \one_{2(\la-2m+2c+a)}
\\
&\quad
= q^{\binom{2c+1}{2}-2m -a(2m-2c-a)}  
q^{-2(a+1)(\la-2m+2c+a)-a^2}  \cdot
\\
&\qquad\qquad
q^{2c(m-2c-a-\la)} \qbinom{m-c-a-\la}{c}_{q^2} E^{(a)} \one_{2(\la-2m+2c+a)}
 \\
&\quad
= q^{2(a+c+1)(m-a-\la)-2ac-\binom{2c+2}{2}+1} \qbinom{m-c-a-\la}{c}_{q^2} E^{(a)} \one_{2(\la-2m+2c+a)}.
\end{align*}
The proposition is proved.
\end{proof}

\subsection{The $\imath$-canonical basis for simple $\U$-modules $L(2\la)$}
  \label{sec:iCB:evK}
  
Recall $\cbinom{m}{c}$ from \eqref{cbinom}. 
The following theorem confirms \cite[Conjecture~4.13]{BW13}.
 \begin{thm}  \label{thm:iCB:evK}
  $\quad$
  \begin{enumerate}
  \item
  The set $\{\dvp{n} \vev \mid 0\le n \le 2\la \}$ forms the $\imath$-canonical basis for $L(2\la)$, for each $\la \in \N$. 
  Moreover, $\dvp{2\la+1}\vev =\dvp{2\la} \vev$, and $\dvp{n} \vev =0$ for $n \ge 2\la+2$.

\item
The set $\{\dvp{n} \mid n \in \N \}$ forms the $\imath$-canonical basis for $\Ui$.  
\end{enumerate}
 \end{thm}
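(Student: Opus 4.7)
The plan is to mirror the proofs of Theorems~\ref{thm:iCB:ev} and \ref{thm:iCB:odd}, with adaptations forced by the extra summand $K^{-1}$ in $t = \mathfrak{t} + K^{-1}$. The two new features are that $\dvp{n}\vev$ expands in the canonical basis $\{F^{(k)}\vev\}$ as a sum of two blocks (coming from the $F\kk\Y$ and $F\kk K^{-1}\Y$ parts of Proposition~\ref{iDPK:FkY}) and that the boundary vanishing of earlier sections is replaced here by a coincidence $\dvp{2\la+1}\vev = \dvp{2\la}\vev$.

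First I would apply Proposition~\ref{iDPK:FkY} to $\vev$. Since $\Y\vev = q^{-1}EK^{-1}\vev = 0$, only the terms with $\Y^{(0)}$ on the right survive: this forces $a = 2m-2c$ in the $F\kk\Y$ block and $a = 2m-1-2c$ in the $F\kk K^{-1}\Y$ block (and symmetrically for $\dvp{2m+1}$). Using $K^{-1}\vev = q^{-2\la}\vev$ together with the evaluation
\[
\qbinom{\kk;m-c}{c}\vev \;=\; q^{2c(m-c-1-\la)}\qbinom{m-1-\la}{c}_{q^2}\vev
\]
coming from \eqref{k=qbinom}, one obtains compact formulas of the shape
\[
\dvp{2m}\vev \;=\; \sum_{c=0}^{m}\alpha_{m,c}^{\la}\, F^{(2m-2c)}\vev \;+\; \sum_{c=0}^{m-1}\beta_{m,c}^{\la}\, F^{(2m-1-2c)}\vev,
\]
and a matching expression for $\dvp{2m+1}\vev$, where after collecting $q$-powers the coefficients $\alpha,\beta$ become $q$-power multiples of symbols $\cbinom{m-\la-c}{c}$ from \eqref{cbinom}.

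Second I would carry out the $q$-power and sign bookkeeping, in the same spirit as \eqref{cbin2}, to verify that the $c=0$ leading term contributes $F^{(n)}\vev$ with coefficient $1$ (for $n\le 2\la$) and that all the other nonvanishing coefficients lie in $q^{-1}\N[q^{-1}]$. Combined with the standard vanishing $F^{(k)}\vev = 0$ for $k>2\la$, this shows that $\{\dvp{n}\vev\}_{0\le n\le 2\la}$ is uni-triangularly related to the canonical basis $\{F^{(k)}\vev\}_{0\le k\le 2\la}$ of $L(2\la)$ with off-diagonal entries in $q^{-1}\N[q^{-1}]$, so by the characterization recalled in \S\ref{sec:iCB:ev} this set is the $\imath$-canonical basis of $L(2\la)$. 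The boundary identity $\dvp{2\la+1}\vev = \dvp{2\la}\vev$ would then be verified directly from the explicit formulas; once it is in hand, the vanishing $\dvp{n}\vev = 0$ for $n\ge 2\la+2$ is purely formal from the recursions \eqref{eq:ttodd2:dvp}: subtracting $t\cdot\dvp{2\la}\vev = [2\la+1]\dvp{2\la+1}\vev$ from $t\cdot\dvp{2\la+1}\vev = [2\la+2]\dvp{2\la+2}\vev + [2\la+1]\dvp{2\la}\vev$ forces $\dvp{2\la+2}\vev = 0$, and higher cases follow by induction. Part (2) then follows by passing to the projective limit along $L(2\la+2)\to L(2\la)$, exactly as in the proofs of Theorems~\ref{thm:iCB:ev} and \ref{thm:iCB:odd}; see \cite[\S6]{BW16}.

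The step I expect to be the main obstacle is the boundary identity $\dvp{2\la+1}\vev = \dvp{2\la}\vev$: the two sides arise from sums over different indexing ranges with different $q$-power prefactors coming from the two-block structure described above, so matching them term by term is a nontrivial $q^2$-binomial manipulation (in which the $F\kk K^{-1}\Y$ block of $\dvp{2\la+1}\vev$ must provide exactly the terms needed to reconstruct the $F\kk\Y$ block of $\dvp{2\la}\vev$), rather than a routine cancellation.
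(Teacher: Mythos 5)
Your proposal is correct and follows essentially the same route as the paper: evaluate the $F\kk\Y$-expansion of Proposition~\ref{iDPK:FkY} on $\vev$ (only the $\Y^{(0)}$ terms survive), obtain the two-block expansions \eqref{dvp:2m-mod}--\eqref{dvp:2m+1-mod} in the canonical basis with coefficients $q^{-2c^2\pm c+\cdots}\cbinom{m-\la-c}{c}$, check uni-triangularity with off-diagonal entries in $q^{-1}\N[q^{-1}]$ together with the boundary identity $\dvp{2\la+1}\vev=\dvp{2\la}\vev$, and pass to the projective system for part (2). The only (harmless) deviation is that you deduce the vanishing for $n\ge 2\la+2$ from the recursion \eqref{eq:ttodd2:dvp} plus the boundary identity, whereas the paper reads it off directly from \eqref{cbin=0} and \eqref{evK=0}; also, the boundary identity is easier than you anticipate, since at $m=\la$ the relevant symbols $\cbinom{-c}{c}$ and $\cbinom{-c-1}{c+1}$ all degenerate to $1$ and the two blocks match term by term after reindexing.
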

 
 \begin{proof}
Let $\la, m \in \N$.  We compute by Proposition~\ref{iDPK:FkY} that 
\begin{align} 
\dvp{2m} \vev 
& = \sum_{c=0}^{m} q^{-2c^2-c} \cbinom{m-\la-c}{c} F^{(2m-2c)}  \vev
  \label{dvp:2m-mod}  \\&\quad + \sum_{c=0}^{m-1} q^{-2c^2-3c-1+2m-2\la} \cbinom{m-\la-c}{c} F^{(2m-1-2c)}  \vev;
\notag \\
\dvp{2m+1} \vev  
&= \sum_{c=0}^m q^{-2c^2+c} \cbinom{m-\la-c}{c} F^{(2m+1-2c)}  \vev
  \label{dvp:2m+1-mod} 
  \\&\quad + \sum_{c=0}^m q^{-2c^2-c+2m-2\la} \cbinom{m-\la-c}{c} F^{(2m-2c)}  \vev.
 \notag
\end{align}
We observe that 
\begin{align}  \label{evK=0}
\begin{split}
    F^{(2m-1-2c)} \vev =F^{(2m-2c)}  \vev = F^{(2m+1-2c)} \vev =0, & \quad \text{ if } c<m-\la.
  \end{split}
\end{align}
It follows from \eqref{cbin=0}, \eqref{dvp:2m-mod}, \eqref{dvp:2m+1-mod} and \eqref{evK=0}  
that $\dvp{2m} \vev =\dvp{2m+1} \vev =0$, for $m\ge \la+1$;
moreover, for $m\le \la$, we have
 \begin{align*}
 \dvp{2m} \vev  & \in F^{(2m)}  \vev + \sum_{c\ge 1}q^{-1} \N[q^{-1}] F^{(2m-2c)} \vev,
 \\
 \dvp{2m-1} \vev  & \in F^{(2m-1)}  \vev + \sum_{c\ge 1}q^{-1} \N[q^{-1}] F^{(2m-1-2c)} \vev,
  \\
 \dvp{2\la+1} \vev  & = \dvp{2\la}  \vev.
 \end{align*}
 Therefore, the first statement follows by the characterization properties of the $\imath$-canonical basis for $L(2\la)$.
 The second statement follows now from the definition of the $\imath$-canonical basis for $\Ui$  
 using the projective system $\{ L(2\la) \}_{\la\in \N}$; cf. \cite[\S6]{BW16} (also cf. \cite[\S4]{BW13}). 
 \end{proof}

\subsection{Proof of Theorem~\ref{thm:iDPK}}
  \label{sec:proof:iDPK}

We prove by induction on $n$, where the  base cases for $\dvp{n}$ with $n=0,1,2$ are clear. The induction is carried out in 2 steps. 

\vspace{.3cm}
(1) Assuming the formula \eqref{dvp2m} for $\dvp{2m}$, we shall establish the formula \eqref{dvp2m+1} for $\dvp{2m+1}$.

We shall make a repeated use of the following formula, which easily follows from \eqref{FYn} :
\begin{equation}
 \label{sYa}
 (\Y+F+K^{-1}) \Y^{(a)}
 = [a+1]\Y^{(a+1)} +q^{-2a} \Y^{(a)}F +\Y^{(a-1)} \frac{q^{3-3a}K^{-2} -q^{1-a}}{q^2-1} +q^{-2a} \Y^{(a)}K^{-1}.
\end{equation} 

Let us denote the 2 summands for $\dvp{2m}$ in \eqref{dvp2m} by $S_0, S_1$, and so 
\[
\dvp{2m} =S_0 +S_1.
\]
Using \eqref{sYa} we can rewrite $t \cdot S_0$ in the $\Y \kk F$ form as 
\[
t \cdot S_0 =A_0 +A_1, 
\]
where
\begin{align*}  
A_0
&=  \sum_{c=0}^m \sum_{a=0}^{2m-2c} q^{\binom{2c+1}{2} -a(2m-2c-a)}
\cdot 
\notag \\
& \left( [a+1] \Y^{(a+1)}  \qbinom{\kk; 1-m}{c}  F^{(2m-2c-a)}
 +q^{-2a} [2m+1-2c-a] \Y^{(a)}  \qbinom{\kk; -m}{c}  F^{(2m+1-2c-a)} \right.
 \notag \\
& \qquad +
\left. \Y^{(a-1)} \frac{q^{3-3a} K^{-2} - q^{1-a} }{q^2-1} \qbinom{\kk; 1-m}{c}  F^{(2m-2c-a)} \right),
\notag
\\
A_1
&= \sum_{c=0}^m \sum_{a=0}^{2m-2c} q^{\binom{2c+1}{2} -a(2m-2c-a) -2a} 
\Y^{(a)}  \qbinom{\kk; 1-m}{c} K^{-1} F^{(2m-2c-a)}.
\end{align*}

Using \eqref{sYa} we can also rewrite $t \cdot S_1$ in the $\Y \kk F$ form as 
\[
t \cdot S_1 =B_1 +B_0, 
\]
where
\begin{align*} 
B_1
&=  \sum_{c=0}^{m-1} \sum_{a=0}^{2m-1-2c} q^{\binom{2c+2}{2}-2m -a(2m-1-2c-a)}
\cdot  \\
& \qquad \quad \left( [a+1] \Y^{(a+1)}  \qbinom{\kk; 1-m}{c} K^{-1} F^{(2m-1-2c-a)}
\right.
\\
& \qquad \qquad +q^{-2a-2} [2m-2c-a] \Y^{(a)}  \qbinom{\kk; -m}{c} K^{-1} F^{(2m-2c-a)}  
\\
& \qquad \qquad +
\left. \Y^{(a-1)} \frac{q^{3-3a} K^{-2} - q^{1-a} }{q^2-1} \qbinom{\kk; 1-m}{c} K^{-1} F^{(2m-1-2c-a)} \right),
\\
B_0
&= \sum_{c=0}^{m-1} \sum_{a=0}^{2m-1-2c} q^{\binom{2c+2}{2}-2m -a(2m-1-2c-a) -2a} 
\Y^{(a)}  \qbinom{\kk; 1-m}{c} K^{-2} F^{(2m-1-2c-a)}.
\end{align*}
Hence by \eqref{eq:ttodd2} we have 
\begin{equation}   \label{eq:dd}
[2m+1] \dvp{2m+1} = t \cdot \dvp{2m} =(A_0 +B_0) +(A_1+B_1).
\end{equation}
We shall rewrite $A_0+B_0$ in the form
\[
A_0 +B_0 =  \sum_{c=0}^m \sum_{a=0}^{2m+1-2c} \Y^{(a)} f_{a,c}^0(\kk) F^{(2m+1-2c-a)}, 
\]
where
\begin{align*}
f_{a,c}^0(\kk) &= q^{\binom{2c+1}{2} -(a-1)(2m+1-2c-a)}   [a]\,  \qbinom{\kk; 1-m}{c} \\
&\quad + \left( q^{\binom{2c+1}{2} -a(2m-2c-a) -2a} [2m+1-2c-a]\, \qbinom{\kk; -m}{c} \right. \\
&\qquad\quad \left. +q^{\binom{2c-1}{2} -(a+1)(2m+1-2c-a)}  \frac{q^{-3a} K^{-2} - q^{-a} }{q^2-1} \qbinom{\kk; 1-m}{c-1} \right.
\\
&\qquad\quad \left. + q^{\binom{2c}{2}-2m -a(2m+1-2c-a) -2a}  \qbinom{\kk; 1-m}{c-1} K^{-2} \right). 
\end{align*}
A direct computation shows that
\begin{align*}
& f_{a,c}^0(\kk) 
\\
&= q^{\binom{2c}{2} -a(2m+1-2c-a)} \cdot q^{2m+1-a} [a] \,  \qbinom{\kk; 1-m}{c} \\
&\qquad + q^{\binom{2c}{2} -a(2m+1-2c-a)}  \qbinom{\kk; 1-m}{c-1}  \cdot
\\
&\qquad\qquad 
 \left( q^{2c-a} [2m+1-2c-a] \frac{q^{-4m}K^{-2}-1}{q^{4c}-1}
 +q^{a-2m}  \frac{q^{-3a} K^{-2} - q^{-a} }{q^2-1}  + q^{-2m -2a} K^{-2}
 \right)  
\\%
&= q^{\binom{2c}{2} -a(2m+1-2c-a)} \left( q^{2m+1-a} [a]  \qbinom{\kk; 1-m}{c}  +  \qbinom{\kk; 1-m}{c-1}  q^{-a} [2m+1-a]  \frac{q^{4c-4m}K^{-2}-1}{q^{4c}-1} \right)
\\%
&=q^{\binom{2c}{2} -a(2m+1-2c-a)} [2m+1] \qbinom{\kk; 1-m}{c}.
\end{align*}

On the other hand, we shall rewrite $A_1+B_1$ in the form
\[
A_1 +B_1 =  \sum_{c=0}^m \sum_{a=0}^{2m-2c} \Y^{(a)} f_{a,c}^1(\kk) K^{-1} F^{(2m-2c-a)}, 
\]
where
\begin{align*}
f_{a,c}^1(\kk) &= q^{\binom{2c+1}{2} -a(2m-2c-a) -2a}  \qbinom{\kk; 1-m}{c} 
+  q^{\binom{2c+2}{2} -2m-(a-1)(2m-2c-a)} [a]\, \qbinom{\kk; 1-m}{c} \\
&\qquad\quad  + q^{\binom{2c+2}{2}-2m -a(2m-1-2c-a) -2a-2} [2m-2c-a] \qbinom{\kk; -m}{c}  
\\
&\qquad\quad +q^{\binom{2c}{2} -2m -(a+1)(2m-2c-a)}  \frac{q^{-3a} K^{-2} - q^{-a} }{q^2-1} \qbinom{\kk; 1-m}{c-1}.
\end{align*}
Denote by $U_1$ the sum of the first two summands 
and $U_2$ the sum of the last 2 summands of $f_{a,c}^1(\kk)$ above, so that 
\[
f_{a,c}^1(\kk)=U_1+U_2.
\]
We have 
\begin{align*}
U_1&=q^{\binom{2c+1}{2}-2m -a(2m-2c-a)}
\left( 
q^{2m-2a}  + q^{1+2m-a} [a]  
 \right) \qbinom{\kk; 1-m}{c}
 \\
 &= q^{\binom{2c+1}{2}-2m -a(2m-2c-a)}  \cdot   q^{2m-a}[a+1]  \qbinom{\kk; 1-m}{c}.
\end{align*}
Moreover, a direct computation shows that $U_2$ is equal to
\begin{align*}
&=  q^{\binom{2c+1}{2}-2m -a(2m-2c-a)}  \qbinom{\kk; 1-m}{c-1}  \cdot
\\
& \qquad 
\left(
q^{2c-a-1} [2m-2c-a] \frac{q^{-4m}K^{-2}-1}{q^{4c}-1}
+ q^{-2m+a} \frac{q^{-3a} K^{-2} - q^{-a} }{q^2-1} 
\right)
\\
& =q^{\binom{2c+1}{2}-2m -a(2m-2c-a)}  \qbinom{\kk; 1-m}{c-1} \cdot 
\left(
q^{-a-1} [2m-a] \frac{q^{4c-4m}K^{-2}-1}{q^{4c}-1}
\right)
\\
&= q^{\binom{2c+1}{2}-2m -a(2m-2c-a)} \cdot q^{-a-1} [2m-a]  \qbinom{\kk; 1-m}{c}.
\end{align*}
Hence we conclude that 
\[
f_{a,c}^1(\kk)=U_1+U_2 = q^{\binom{2c+1}{2}-2m -a(2m-2c-a)} [2m+1]  \qbinom{\kk; 1-m}{c}.
\]
The formula \eqref{dvp2m+1} follows from \eqref{eq:dd} and
the formulae of $f_{a,c}^0(\kk)$ and $f_{a,c}^1(\kk)$ above.

\vspace{.3cm}

(2) Now we shall prove the formula  for $\dvp{2m+2}$ (replacing $m$ by $m+1$ in \eqref{dvp2m})
by assuming the formula \eqref{dvp2m+1} for $\dvp{2m+1}$.

Let us denote the 2 summands for $\dvp{2m+1}$ in \eqref{dvp2m+1} by $T_0, T_1$, and so 
\[
\dvp{2m+1} =T_0 +T_1.
\]
Using \eqref{sYa} we can rewrite $t \cdot T_0$ in the $\Y \kk F$ form as 
\[
t \cdot T_0 =C_0 +C_1, 
\]
where
\begin{align*}  
C_0
&=  \sum_{c=0}^m \sum_{a=0}^{2m+1-2c} q^{\binom{2c}{2} -a(2m+1-2c-a)}
\cdot 
\notag \\
& \left( [a+1] \Y^{(a+1)}  \qbinom{\kk; 1-m}{c}  F^{(2m+1-2c-a)}
 +q^{-2a} [2m+2-2c-a] \Y^{(a)}  \qbinom{\kk; -m}{c}  F^{(2m+2-2c-a)} \right.
 \notag \\
& \qquad +
\left. \Y^{(a-1)} \frac{q^{3-3a} K^{-2} - q^{1-a} }{q^2-1} \qbinom{\kk; 1-m}{c}  F^{(2m+1-2c-a)} \right),
\notag
\\
C_1
&= \sum_{c=0}^m \sum_{a=0}^{2m+1-2c} q^{\binom{2c}{2} -a(2m+1-2c-a) -2a} 
\Y^{(a)}  \qbinom{\kk; 1-m}{c} K^{-1} F^{(2m+1-2c-a)}.
\end{align*}

Using \eqref{sYa} we can also rewrite $t \cdot T_1$ in the $\Y \kk F$ form as 
\[
t \cdot T_1 =D_1 +D_0, 
\]
where
\begin{align*} 
D_1
&=  \sum_{c=0}^{m} \sum_{a=0}^{2m-2c} q^{\binom{2c+1}{2}-2m -a(2m-2c-a)}
\cdot 
\notag \\
&\qquad  \left( [a+1] \Y^{(a+1)}  \qbinom{\kk; 1-m}{c} K^{-1} F^{(2m-2c-a)} \right.
\\
&\qquad\qquad +q^{-2a-2} [2m+1-2c-a] \Y^{(a)}  \qbinom{\kk; -m}{c} K^{-1} F^{(2m+1-2c-a)}  
 \notag \\
& \qquad\qquad 
 + \left. \Y^{(a-1)} \frac{q^{3-3a} K^{-2} - q^{1-a} }{q^2-1} \qbinom{\kk; 1-m}{c} K^{-1} F^{(2m-2c-a)} \right),
\notag
\\
D_0
&= \sum_{c=0}^{m} \sum_{a=0}^{2m-2c} q^{\binom{2c+1}{2}-2m -a(2m-2c-a) -2a} 
\Y^{(a)}  \qbinom{\kk; 1-m}{c} K^{-2} F^{(2m-2c-a)}.
\end{align*}

We denote the two summands in \eqref{dvp2m} by $G_0$ and $G_1$, so that 
\[
\dvp{2m} =G_0 +G_1.
\] 
Hence by \eqref{eq:ttodd2} we have 
\begin{align}
  \label{eq:CDG01}
\begin{split}
[2m+2] \dvp{2m+2} &= t \cdot \dvp{2m+1} -[2m+1] \dvp{2m} 
\\
&=(C_0 +D_0 -[2m+1]G_0) +(C_1+D_1 -[2m+1]G_1).
\end{split}
\end{align} 
We shall write $C_0+D_0 -[2m+1]G_0$ in the form
\[
C_0 +D_0 -[2m+1]G_0 =  \sum_{c=0}^{m+1} \sum_{a=0}^{2m+2-2c} \Y^{(a)} g_{a,c}^0(\kk) F^{(2m+2-2c-a)}.
\]
Indeed $g_{a,c}^0 (\kk)$ can be organized (by separating the second summand in $C_0$) as
\begin{align}
  \label{gac0}
g_{a,c}^0 (\kk) &= q^{\binom{2c}{2} - a(2m+1-2c-a) -2a}   [2m+2-2c-a]\, \qbinom{\kk; -m}{c}   +  
X_0, 
\end{align}
where  
\begin{align*}
X_0 &= q^{\binom{2c}{2} -(a-1)(2m+2-2c-a)}   [a]\,  \qbinom{\kk; 1-m}{c} \\
&\quad   +q^{\binom{2c-2}{2} -(a+1)(2m+2-2c-a)}  \frac{q^{-3a} K^{-2} - q^{-a} }{q^2-1} \qbinom{\kk; 1-m}{c-1} 
\\
&\quad +  q^{\binom{2c-1}{2} -2m -a(2m+2-2c-a) -2a}  \qbinom{\kk; 1-m}{c-1}  K^{-2} \\
&\quad   -q^{\binom{2c-1}{2} - a(2m+2-2c-a)} [2m+1] \qbinom{\kk; 1-m}{c-1}.
\end{align*}
We  rewrite $X_0 = q^{\binom{2c+1}{2} - a(2m+2-2c-a)}  \qbinom{\kk; 1-m}{c-1} Z_0$,
where
\begin{align*}
Z_0 &= q^{2m-a-4c+2} [a] \frac{q^{4c-4m} K^{-2} - 1}{q^{4c}-1} 
+q^{1-2m+a-4c} \frac{q^{-3a} K^{-2} - q^{-a} }{q^2-1} 
\\
&\quad + q^{1-2m-4c-2a} K^{-2} 
-q^{1-4c} [2m+1]. 
\end{align*}
A direct computation shows that 
\[
Z_0 =q^{2m+2-2c-a} [2c+a] \frac{q^{-4m} K^{-2} - 1}{q^{4c}-1},
\]
and this gives us
\begin{align*}
X_0 =q^{\binom{2c+1}{2} - a(2m+2-2c-a) -2a} q^{2m+2-2c-a}  [2c+a]\, \qbinom{\kk; -m}{c}.
\end{align*}
Plugging this formula for $X_0$ into \eqref{gac0} we obtain
\begin{align}
\label{gac0:answer}
g_{a,c}^0 (\kk) =q^{\binom{2c+1}{2} - a(2m+2-2c-a) -2a}  [2m+2]\, \qbinom{\kk; -m}{c}.
\end{align}

Now let us simplify  $C_1+D_1 -[2m+1]G_1$ by writing it in the form
\[
C_1 +D_1 -[2m+1]G_1 =  \sum_{c=0}^{m} \sum_{a=0}^{2m+1-2c} \Y^{(a)} g_{a,c}^1(\kk) K^{-1} F^{(2m+1-2c-a)}.
\]
Indeed $g_{a,c}^1 (\kk)$ can be organized (by pulling out the second summand of $D_1$) as
\begin{align}
  \label{gac1}
g_{a,c}^1 (\kk) &=  q^{\binom{2c+1}{2}-2m -a(2m-2c-a) -2a-2} [2m+1-2c-a]   \qbinom{\kk; -m}{c}  + X_1, 
\end{align}
where  
\begin{align*}
X_1 &=
q^{\binom{2c}{2} -a(2m+1-2c-a) -2a}  \qbinom{\kk; 1-m}{c} 
\\
&\qquad + q^{\binom{2c+1}{2}-2m - (a-1)(2m+1-2c-a)} [a]   \qbinom{\kk; 1-m}{c}   
\\
& \qquad + q^{\binom{2c-1}{2}-2m - (a+1)(2m+1-2c-a)}   \frac{q^{-3a} K^{-2} - q^{-a} }{q^2-1} \qbinom{\kk; 1-m}{c-1}
 \\
&\quad   -q^{\binom{2c}{2} -2m - a(2m+1-2c-a)} [2m+1] \qbinom{\kk; 1-m}{c-1}.
\end{align*}
We  rewrite 
\[
X_1 = q^{\binom{2c+2}{2} -2m-2 - a(2m+1-2c-a)}  \qbinom{\kk; 1-m}{c-1} Z_1,
\]
where
\begin{align*}
Z_1 &= q^{2m-2a-4c+1} \frac{q^{4c-4m} K^{-2} - 1}{q^{4c}-1} 
+q^{2+2m-a-4c} [a] \frac{q^{4c-4m} K^{-2} - 1}{q^{4c}-1} 
\\
&\qquad+q^{1-2m+a-4c} \frac{q^{-3a} K^{-2} - q^{-a} }{q^2-1} 
 -q^{1-4c} [2m+1]. 
\end{align*}
A direct computation shows that
\[
Z_1 = q^{2m-2c-a+1} [2c+a+1] \frac{q^{-4m} K^{-2} - 1}{q^{4c}-1},
\]
and this gives us
\begin{align*}
X_1 = q^{\binom{2c+2}{2} -2m-2 - a(2m+1-2c-a)} q^{2m-2c-a+1} [2c+a+1] \, \qbinom{\kk; -m}{c}.
\end{align*}
Plugging this formula for $X_1$ into \eqref{gac1} we obtain
\begin{align}
\label{gac1:answer}
g_{a,c}^1 (\kk) & =q^{\binom{2c+2}{2} -2m-2 - a(2m+1-2c-a)}  [2m+2]\, \qbinom{\kk; -m}{c}.
\end{align}
The formula for $\dvp{2m+2}$ now follows from \eqref{eq:CDG01}, \eqref{gac0:answer} and \eqref{gac1:answer}.

This completes the proof of Theorem~\ref{thm:iDPK}.

\section{Formulae for $\imath$-divided powers $\dvd{n}$ and $\imath$-canonical basis on $L(2\la+1)$}
  \label{sec:dvK:odd}

In this section we formulate a variant of $\imath$-divided powers (of odd weights) $\dvd{n}$ starting with $t$ in \eqref{def:t}, and established closed formulae for $\dvd{n}$ in $\U$.

\subsection{The $\imath$-divided powers $\dvd{n}$}

Recall $t=\Y+F+K^{-1}.$ Set $\dvd{0}=1, \dvd{1}= t$. 
The divided powers $\dvd{n}$ (of odd weights) satisfy the recursive relations: 
\begin{align}  
\label{eq:ttodd2:dvd} 
\begin{split}
t \cdot \dvd{2a-1} &=  [2a] \dvd{2a},
\\
t \cdot \dvd{2a} &=[2a+1] \dvd{2a+1}+[2a] \dvd{2a-1}, \quad \text{ for } a \ge 1.
\end{split}
\end{align}
Note  $\dvd{2} =t^2/[2]$, and $\dvd{3} = t(t^2-[2]^2)/[3]!$. 
Note that $\dvd{n}$  satisfy the same recursive relations \eqref{eq:tt} as $\dvev{n}$.
Hence $\dvd{n}$ is formally given by the same formula as in \eqref{def:idp} (with $\ttt$ therein replaced by $t$). 

\subsection{The $\Y \kk F$-formula for $\dvd{n}$}

Recall $\LR{\kk;a}{n}$ from \eqref{brace}. 

\begin{thm}
  \label{thm:iDPK:odd}
For $m\ge 0$, we have 
\begin{align}
\dvd{2m} &= \sum_{c=0}^m \sum_{a=0}^{2m-2c} q^{\binom{2c-1}{2}-1 -a(2m-2c-a)}
\Y^{(a)}  \LR{\kk;2-m}{c}  F^{(2m-2c-a)}
\label{dvd2m}
\\ 
&\quad 
+ \sum_{c=0}^{m-1} \sum_{a=0}^{2m-1-2c} q^{\binom{2c}{2}+1-2m  -a(2m-1-2c-a)} \Y^{(a)}  \LR{\kk;2-m}{c} K^{-1}  F^{(2m-1-2c-a)}, 
\notag 
\\
\notag\\
\dvd{2m+1} &=  \sum_{c=0}^{m} \sum_{a=0}^{2m+1-2c} q^{\binom{2c}{2} -a(2m+1-2c-a)} \Y^{(a)} \LR{\kk;1-m}{c}  F^{(2m+1-2c-a)}
\label{dvd2m+1}
\\
& \quad
+\sum_{c=0}^{m} \sum_{a=0}^{2m-2c} q^{\binom{2c+1}{2}-2m -a(2m-2c-a)} \Y^{(a)}  \LR{\kk;1-m}{c} K^{-1}  F^{(2m-2c-a)}. 
\notag
\end{align}
\end{thm}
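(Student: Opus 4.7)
The proof will proceed by induction on $n$, mirroring closely the proof of Theorem~\ref{thm:iDPK} given in \S\ref{sec:proof:iDPK}, since $\dvd{n}$ satisfies the same recursive relations \eqref{eq:tt} as $\dvev{n}$ but with polynomial generator $t=\Y+F+K^{-1}$ in place of $\mathfrak{t}=\Y+F$. The base cases $n=0,1,2$ are immediate from the definitions. My plan is to carry out two induction steps: (1) assuming the formula \eqref{dvd2m+1} for $\dvd{2m-1}$, derive \eqref{dvd2m} for $\dvd{2m}$ via the relation $[2m]\dvd{2m} = t\cdot \dvd{2m-1}$; and (2) assuming the formula \eqref{dvd2m} for $\dvd{2m}$, derive \eqref{dvd2m+1} for $\dvd{2m+1}$ via $[2m+1]\dvd{2m+1} = t\cdot \dvd{2m} - [2m]\dvd{2m-1}$.

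The main computational tool is the identity \eqref{sYa} expressing $t \cdot \Y^{(a)}$ in the $\Y \kk F K^{-1}$ form, together with the commutation rules $\LR{\kk;a}{n} F = F \LR{\kk;a+1}{n}$ and $\LR{\kk;a}{n} \Y = \Y \LR{\kk;a-1}{n}$. In each induction step I will split the right-hand side into two summands $S_0$ and $S_1$ (the one without $K^{-1}$ and the one with $K^{-1}$), apply $t = \Y+F+K^{-1}$ to each via \eqref{sYa}, then regroup the resulting terms by whether they contain $K^{-1}$ or $K^{-2}$. Terms containing $K^{-2}$ will be absorbed into the $\LR{\kk;a}{n}$ expressions via the telescoping identity
\[
q^{\alpha}[a]\LR{\kk;b}{c} + q^{\beta}\LR{\kk;b}{c-1}\frac{q^{-4m'}K^{-2}-q^2}{q^{4c}-1} \cdot(\text{stuff}) = q^{\gamma}[2m'+?]\LR{\kk;b'}{c},
\]
analogous to the simplifications carried out for the $X$ and $Z_0, Z_1$ expressions in the proof of Theorem~\ref{thm:iDPK}.

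The chief obstacle, as in the $\dvp{n}$ case, will be the sheer bookkeeping: tracking the correct exponents of $q$ in each of the four families of terms (the two summands of $t\cdot S_0$ and the two of $t\cdot S_1$), and showing that the coefficients $\texttt{f}_{a,c}(\kk)$ and $\texttt{g}_{a,c}(\kk)$ collected from these pieces simplify to the desired $q^{\binom{2c-1}{2}-1-a(2m-2c-a)}[2m]\LR{\kk;2-m}{c}$ and its odd analogue. I expect the telescoping to work out cleanly because the only structural change from the $\dvp{n}$ setting is the replacement $q^{4a+4i-4}K^{-2}-1 \rightsquigarrow q^{4a+4i-4}K^{-2}-q^2$ in the definition of $\LR{\kk;a}{n}$ relative to $\qbinom{\kk;a}{n}$, which shifts all relevant $q$-exponents by predictable amounts. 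Finally, once Theorem~\ref{thm:iDPK:odd} is established, Proposition~\ref{iDP:odd:FkY}-style $F\kk\Y$-reformulations and $\Udot$-reformulations follow by applying the anti-involution $\vs$ (cf. Lemma~\ref{lem:anti2}) and Lemma~\ref{dpY}, and the $\imath$-canonical basis statement on $L(2\la+1)$ follows by the same argument as Theorem~\ref{thm:iCB:odd}.
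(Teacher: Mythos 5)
Your plan coincides with the paper's own proof of Theorem~\ref{thm:iDPK:odd}: the same two-step induction using $[2m]\dvd{2m}=t\cdot \dvd{2m-1}$ and $[2m+1]\dvd{2m+1}=t\cdot \dvd{2m}-[2m]\dvd{2m-1}$, the same splitting of each inductive hypothesis into its $K^{-1}$-free and $K^{-1}$-containing summands, and the same use of \eqref{sYa} followed by telescoping simplification of the resulting coefficient functions. The only caveat is that your write-up defers the exponent bookkeeping to ``direct computation,'' which is precisely the (lengthy but routine) content the paper supplies in \S\ref{sec:proof:iDPK:odd}.
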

The proof of this theorem will be given in \S \ref{sec:proof:iDPK:odd} below.
It follows from Theorem~\ref{thm:iDPK:odd} that $\dvp{n} \in \DA$ for all $n$. 

\begin{example}
We have the following examples of $\dvd{n}$, for $2\le n \le 4$: 
\begin{align*}
\dvd{2} 
&= b^{(2)} + q^{-1} \llbracket \kk;1 \rrbracket +q^{-1}K^{-1}F +q^{-1}\Y K^{-1} ,
\\
\dvd{3} 
&= b^{(3)} + q \Y \llbracket \kk;0 \rrbracket + q \llbracket \kk;0 \rrbracket F 
+ q^{-2} \Y^{(2)} K^{-1} + q^{-3} \Y K^{-1} F + q^{-2} K^{-1} F^{(2)} + q \llbracket \kk;0 \rrbracket K^{-1},
\\
\dvd{4}
&= b^{(4)} + q^{-1} \Y^{(2)} \llbracket \kk;0 \rrbracket + q^{-2} \Y \llbracket \kk;0 \rrbracket F + q^{-1} \llbracket \kk;0 \rrbracket F^{(2)} + q^2 \LR{\kk;0}{2}
\\
&\quad + q^{-3} \Y^{(3)} K^{-1} + q^{-5} \Y^{(2)} K^{-1} F + q^{-5} \Y K^{-1} F^{(2)} + q^{-3} K^{-1} F^{(3)}
\\
& \quad + q^{-2} \Y \llbracket \kk;0 \rrbracket K^{-1} + q^{-2} \llbracket \kk;0 \rrbracket K^{-1} F.
\end{align*}
\end{example}

\subsection{Some reformulations}

We obtain the following $F\kk\Y$-formula from Theorem~\ref{thm:iDPK:odd} by applying the anti-involution $\vs$ and Lemma~\ref{lem:anti2}.
We skip the proof, which is similar to the one for Proposition~\ref{iDP:FkY}. 
\begin{prop}   \label{iDPK:odd:FkY}
For $m\ge 0$, we have 
\begin{align*}
\dvd{2m} &= \sum_{c=0}^m \sum_{a=0}^{2m-2c} (-1)^c q^{c+ a(2m-2c-a)} F^{(a)}  \LR{\kk;m-c}{c}  \Y^{(2m-2c-a)}
\\ 
&\quad 
+ \sum_{c=0}^{m-1} \sum_{a=0}^{2m-1-2c} (-1)^c q^{-c-1+ 2m +a(2m-1-2c-a)} F^{(a)} \LR{\kk;m-c}{c} K^{-1}  \Y^{(2m-1-2c-a)}, 
\\
\\%
\dvd{2m+1} &=  \sum_{c=0}^{m} \sum_{a=0}^{2m+1-2c} (-1)^c q^{-c+ a(2m+1-2c-a)} F^{(a)}  \LR{\kk;1+m-c}{c}  \Y^{(2m+1-2c-a)}
\\
& \quad
+\sum_{c=0}^m \sum_{a=0}^{2m-2c} (-1)^c q^{-3c+ 2m +a(2m-2c-a)} F^{(a)}   \LR{\kk;1+m-c}{c} K^{-1}  \Y^{(2m-2c-a)}. 
\end{align*}
\end{prop}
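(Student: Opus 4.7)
The plan is to mirror the proof of Proposition~\ref{iDP:FkY}: apply the anti-involution $\vs$ of the $\Q$-algebra $\B$ (from Lemma~\ref{lem:anti}) to both sides of the two identities \eqref{dvd2m} and \eqref{dvd2m+1} in Theorem~\ref{thm:iDPK:odd}, simplify using Lemma~\ref{lem:anti2}, and then perform a change of summation variable.

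First I would verify that $\vs(\dvd{n}) = \dvd{n}$ for all $n \ge 0$. Since $\vs(\Y)=\Y$, $\vs(F)=F$, $\vs(K^{-1})=K^{-1}$, the element $t = \Y + F + K^{-1}$ is $\vs$-fixed. Each $\dvd{n}$ is a polynomial in $t$ uniquely determined by the recursion \eqref{eq:ttodd2:dvd}, whose scalar coefficients $[k]$ are themselves $\vs$-invariant; so a straightforward induction on $n$ gives $\vs(\dvd{n}) = \dvd{n}$. Consequently, applying $\vs$ to the left-hand sides of \eqref{dvd2m}--\eqref{dvd2m+1} leaves them unchanged.

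Next I would apply $\vs$ to the right-hand sides, which reverses the order of products. Using $\vs(\Y^{(a)}) = \Y^{(a)}$, $\vs(F^{(a)}) = F^{(a)}$, $\vs(K^{-1})=K^{-1}$, $\vs(q^n)=q^{-n}$, together with the identity
\[
\vs\!\left(\LR{\kk;a}{n}\right) = (-1)^n q^{2n(n-1)} \LR{\kk;2-a-n}{n}
\]
from Lemma~\ref{lem:anti2}, each summand of \eqref{dvd2m}--\eqref{dvd2m+1} becomes an expression of the shape $(-1)^c q^{(\cdots)} F^{(2m-2c-a)} \LR{\kk;m-c}{c} \Y^{(a)}$, possibly with a $K^{-1}$ factor, which can be freely repositioned next to $\Y^{(a)}$ since $K^{-1}$ commutes with any polynomial in $\kk$. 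Collapsing the $q$-powers requires the four elementary identities
\[
-\tbinom{2c-1}{2}+1+2c(c-1)=c, \qquad -\tbinom{2c}{2}-1+2m+2c(c-1)=-c-1+2m,
\]
\[
-\tbinom{2c}{2}+2c(c-1)=-c, \qquad -\tbinom{2c+1}{2}+2c(c-1)=-3c,
\]
which together with the shifts $\LR{\kk;2-m}{c} \mapsto \LR{\kk;m-c}{c}$ and $\LR{\kk;1-m}{c} \mapsto \LR{\kk;1+m-c}{c}$ yield exactly the advertised exponents.

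Finally, a change of summation variable $a \mapsto 2m-2c-a$ (respectively $a \mapsto 2m-1-2c-a$ and $a \mapsto 2m+1-2c-a$), which preserves the quadratic exponents $a(2m-2c-a)$, $a(2m-1-2c-a)$, and $a(2m+1-2c-a)$, rewrites each sum in the form claimed in Proposition~\ref{iDPK:odd:FkY}. The only obstacle is bookkeeping of $q$-power exponents and the placement of $K^{-1}$; once the four scalar identities above are checked, the argument is a direct transcription from Theorem~\ref{thm:iDPK:odd}, entirely parallel to the proof of Proposition~\ref{iDP:FkY}.
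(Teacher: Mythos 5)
Your proposal is correct and is exactly the argument the paper intends: the paper skips this proof, stating only that it follows from Theorem~\ref{thm:iDPK:odd} by applying the anti-involution $\vs$ together with Lemma~\ref{lem:anti2}, in parallel with the proof of Proposition~\ref{iDP:FkY}. Your four exponent identities and the commutation of $K^{-1}$ with $\LR{\kk;a}{n}$ check out, so the details you supply are a faithful expansion of the paper's omitted computation.
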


Below is a modified reformulation of Theorem~\ref{thm:iDPK:odd}. Again the formulae below indicate that $\dvd{n} \one_{2\la-1}$
is a positive integral (i.e. $\N[q,q^{-1}]$-) linear combination of canonical basis elements of $\Udot$. 

\begin{prop}
For $m\ge 0$ and $\la \in \Z$, we have 
\begin{align*}
& \dvd{2m} \one_{2\la-1}
\\
&\quad = \sum_{c=0}^m \sum_{a=0}^{2m-2c} q^{2(a+c)(m-a-\la)-2ac +a-\binom{2c}{2}} \qbinom{m-c-a-\la+1}{c}_{q^2} E^{(a)}  F^{(2m-2c-a)}  \one_{2\la-1}
\\ 
&\qquad 
+ \sum_{c=0}^{m-1} \sum_{a=0}^{2m-1-2c} q^{2(a+c+1)(m-a-\la) -2ac-\binom{2c+2}{2}+1} 
\qbinom{m-c-a-\la}{c}_{q^2} E^{(a)} F^{(2m-1-2c-a)}  \one_{2\la-1}; 
\\
\\
& \dvd{2m+1} \one_{2\la-1}
\\
&\quad =  \sum_{c=0}^{m} \sum_{a=0}^{2m+1-2c} 
q^{2(a+c)(m-a-\la)-2ac+2a-\binom{2c-1}{2}+1} \qbinom{m-c-a-\la+1}{c}_{q^2} E^{(a)} F^{(2m+1-2c-a)}  \one_{2\la-1}
\\
& \qquad
+\sum_{c=0}^m \sum_{a=0}^{2m-2c} 
q^{2(a+c+1)(m-a-\la)-2ac+a-\binom{2c+1}{2}+1} \qbinom{m-c-a-\la}{c}_{q^2} E^{(a)} F^{(2m-2c-a)}  \one_{2\la-1}. 
\end{align*}  
In particular, we have $\dvd{n}\one_{2\la-1} \in \UAdot_\odd$, for all $n\in \N, \la \in\Z$. 
\end{prop}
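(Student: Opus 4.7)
The plan is to follow the same template used in Propositions \ref{prop:iDPdot}, \ref{iDP:odd:dot} and \ref{prop:iDPKdot}: start from the $\Y\kk F$-expansion given by Theorem~\ref{thm:iDPK:odd} and rewrite each summand in the modified form by pushing $\one_{2\la-1}$ past $F^{(\cdot)}$, converting $\Y^{(a)}$ into $E^{(a)}$ via Lemma~\ref{dpY}, and evaluating $\LR{\kk;a}{n}$ on the appropriate idempotent via \eqref{kqbinom2}. The final step is a bookkeeping exercise in collecting $q$-powers.

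More explicitly, for the formula for $\dvd{2m}\one_{2\la-1}$, I would process the two sums in \eqref{dvd2m} separately. For the first sum, I would commute $\one_{2\la-1}$ across $F^{(2m-2c-a)}$ using $F^{(k)}\one_{2\la-1} = \one_{2(\la-k)-1}F^{(k)}$, substitute $\Y^{(a)} = q^{-a^2} E^{(a)} K^{-a}$, and apply \eqref{kqbinom2} with $a$ there equal to $2-m$ and with the relevant $\la$ shifted to $\la -2m+2c+a$; this converts $\LR{\kk;2-m}{c}\one_{2(\la-2m+2c+a)-1}$ into $q^{2c(m-2c-a-\la+1)}\qbinom{m-c-a-\la+1}{c}_{q^2}\one_{2(\la-2m+2c+a)-1}$. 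Collecting the $q$-powers $q^{\binom{2c-1}{2}-1-a(2m-2c-a)}\cdot q^{-a^2-2a(\la-2m+2c+a)+a}\cdot q^{2c(m-2c-a-\la+1)}$ and simplifying should give the exponent $q^{2(a+c)(m-a-\la)-2ac+a-\binom{2c}{2}}$ as stated. The second sum in \eqref{dvd2m} is handled the same way; the extra $K^{-1}$ in the middle contributes an additional factor via $K^{-1}F^{(k)}\one_{2\la-1}=q^{2k}\one_{2(\la-k-1)-1}K^{-1}F^{(k)}$ (or, equivalently, absorbs into $E^{(a)}K^{-a-1}\one_{2(\la-2m+2c+a+1)-1}$), and the shift in $\la$ forces the use of $\qbinom{m-c-a-\la}{c}_{q^2}$ instead of $\qbinom{m-c-a-\la+1}{c}_{q^2}$.

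The two sums for $\dvd{2m+1}\one_{2\la-1}$ in \eqref{dvd2m+1} are treated identically, this time using \eqref{kqbinom2} with $a=1-m$ and the appropriate shifted weights. The computations are entirely parallel to those already carried out in the proofs of Propositions~\ref{prop:iDPdot}, \ref{iDP:odd:dot} and \ref{prop:iDPKdot}, and in particular rely on no new identities. Finally, the containment $\dvd{n}\one_{2\la-1}\in \UAdot_\odd$ is immediate from the resulting expressions, since each summand is a $\Z[q,q^{-1}]$-multiple of an element of the form $E^{(a)}F^{(b)}\one_{2\la-1}$ or $E^{(a)}K^{-1}F^{(b)}\one_{2\la-1}$, both of which lie in $\UAdot_\odot$ (one uses $K^{-1}\one_{2\la-1}=q^{-(2\la-1)}\one_{2\la-1}$ in the integral form).

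The main obstacle, though only a technical one, will be the careful tracking of the $q$-powers: the presence of $K^{-1}$ in half of the summands shifts the idempotent's weight by $2$, which changes both the argument of the $q^2$-binomial coefficient produced by \eqref{kqbinom2} and the auxiliary $q$-power picked up when absorbing $K^{-a}K^{-1}$. Matching the resulting exponents to the compact forms $q^{2(a+c+1)(m-a-\la)-2ac-\binom{2c+2}{2}+1}$ and $q^{2(a+c+1)(m-a-\la)-2ac+a-\binom{2c+1}{2}+1}$ requires a careful use of $\binom{2c+2}{2}=\binom{2c}{2}+4c+1$ and similar identities, but no conceptual input beyond what was already used in the even-weight analog (Proposition~\ref{prop:iDPKdot}).
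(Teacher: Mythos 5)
Your proposal follows the paper's proof verbatim in outline: push $\one_{2\la-1}$ through $F^{(\cdot)}$, substitute $\Y^{(a)}=q^{-a^2}E^{(a)}K^{-a}$ from Lemma~\ref{dpY}, evaluate $\LR{\kk;\cdot}{c}$ on the shifted idempotent via \eqref{kqbinom2}, and collect $q$-powers — this is exactly what the paper does. One slip to correct: you quote the $q$-power produced by \eqref{kqbinom2} as $q^{2c(m-2c-a-\la+1)}$, which is the even-weight normalization \eqref{k=qbinom}; the odd-weight identity \eqref{kqbinom2} with $a\mapsto 2-m$ and weight $\la-2m+2c+a$ gives $q^{2c(m-2c-a-\la+2)}$, and only with this extra factor $q^{2c}$ does your displayed product of exponents actually simplify to the stated $q^{2(a+c)(m-a-\la)-2ac+a-\binom{2c}{2}}$. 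Your parenthetical commutation rule $K^{-1}F^{(k)}\one_{2\la-1}=q^{2k}\one_{2(\la-k-1)-1}K^{-1}F^{(k)}$ is also off (the idempotent should be $\one_{2(\la-k)-1}$, and the $q^{2k}$ appears only if $K^{-1}$ is additionally moved past $F^{(k)}$), but the alternative you offer in the same sentence — absorbing $K^{-1}$ into $K^{-a-1}$ acting on $\one_{2(\la-2m+2c+a+1)-1}$ — is correct and is precisely the paper's computation.
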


\begin{proof}
Note $F^{(2m-2c-a)}  \one_{2\la-1} = \one_{2(\la-2m+2c+a)-1} F^{(2m-2c-a)}$, cf. \eqref{dvd2m}. 
Using \eqref{kqbinom2} and Lemma~\ref{dpY}, we compute
\begin{align*}
&q^{\binom{2c-1}{2} -1 -a(2m-2c-a)}
\Y^{(a)} \LR{\kk;2-m}{c}  \one_{2(\la-2m+2c+a)-1}
\\
&= q^{\binom{2c-1}{2} -1 -a(2m-2c-a)} 
q^{2a(2m-\la-2c-a) +a -a^2} \cdot
\\
&\qquad \quad
q^{2c(2+m-2c-a-\la)} \qbinom{m-c-a-\la+1}{c}_{q^2} E^{(a)} \one_{2(\la-2m+2c+a)-1}
 \\
&= q^{2(a+c)(m-a-\la)-2ac +a-\binom{2c}{2}} \qbinom{m-c-a-\la+1}{c}_{q^2} E^{(a)} \one_{2(\la-2m+2c+a)-1}.
\end{align*}
Similarly  we have
\begin{align*}
&q^{\binom{2c}{2} +1-2m -a(2m-1-2c-a)} \Y^{(a)} \LR{\kk;2-m}{c}K^{-1}  \one_{2(\la-2m+2c+a+1)-1}
\\
&= q^{\binom{2c}{2} +1-2m -a(2m-1-2c-a)} 
q^{-2(a+1)(\la-2m+2c+a+1) +a+1 -a^2} \cdot
\\
&\qquad \quad q^{2c(2+m-2c-a-\la-1)} \qbinom{m-c-a-\la}{c}_{q^2} E^{(a)} \one_{2(\la-2m+2c+a+1)-1}
 \\
&= q^{2(a+c+1)(m-a-\la) -2ac-\binom{2c+2}{2}+1} \qbinom{m-c-a-\la}{c}_{q^2} E^{(a)} \one_{2(\la-2m+2c+a+1)-1}.
\end{align*}
This establishes the first formula.

Similarly, the second formula follows from \eqref{dvd2m+1},  \eqref{kqbinom2}  and the following computations:
\begin{align*}
&q^{\binom{2c}{2} -a(2m+1-2c-a)} \Y^{(a)} \LR{\kk; 1-m}{c} \one_{2(\la-2m+2c+a-1)-1}
\\
&\quad
= q^{\binom{2c}{2} -a(2m+1-2c-a)} 
q^{2a(2m-\la-2c-a+1)+a-a^2} \cdot
\\
&\qquad \qquad
q^{2c(m-2c-a-\la+2)} \qbinom{m-c-a-\la+1}{c}_{q^2} E^{(a)} \one_{2(\la-2m+2c+a-1)-1}
 \\
&\quad
= q^{2(a+c)(m-a-\la)-2ac+2a-\binom{2c-1}{2}+1} \qbinom{m-c-a-\la+1}{c}_{q^2} E^{(a)} \one_{2(\la-2m+2c+a-1)-1};
\\ \\
&q^{\binom{2c+1}{2}-2m -a(2m-2c-a)} \Y^{(a)} \LR{\kk; 1-m}{c} K^{-1} \one_{2(\la-2m+2c+a)-1}
\\
&\quad
= q^{\binom{2c+1}{2}-2m -a(2m-2c-a)}  
q^{-2(a+1)(\la-2m+2c+a)+a+1-a^2}  \cdot
\\
&\qquad\qquad
q^{2c(m-2c-a-\la+1)} \qbinom{m-c-a-\la}{c}_{q^2} E^{(a)} \one_{2(\la-2m+2c+a)-1}
 \\
&\quad
= q^{2(a+c+1)(m-a-\la)-2ac+a-\binom{2c+1}{2}+1} \qbinom{m-c-a-\la}{c}_{q^2} E^{(a)} \one_{2(\la-2m+2c+a)-1}.
\end{align*}
The proposition is proved.
\end{proof}

\subsection{The $\imath$-canonical basis for simple $\U$-modules $L(2\la)$}
  \label{sec:iCB:evK}
  
Recall $\cbinom{m}{c}$ from \eqref{cbinom}. 
 The following theorem confirms \cite[Conjecture~4.13]{BW13}.
 \begin{thm}  \label{thm:iCB:oddK}
 $\quad$
  \begin{enumerate}
  \item
  The set $\{\dvd{n}\, \vodd \mid 0\le n \le 2\la +1 \}$ forms the $\imath$-canonical basis for $L(2\la+1)$, for each $\la \in \N$. 
  Moreover, $\dvd{2\la+2}\vodd =\dvd{2\la+1} \vodd$, and $\dvd{n}\, \vodd =0$ for $n \ge 2\la+3$.

\item
The set $\{\dvd{n} \mid n \in \N \}$ forms the $\imath$-canonical basis for $\Ui$.  
\end{enumerate}
 \end{thm}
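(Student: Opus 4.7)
We follow the template of Theorems~\ref{thm:iCB:ev},~\ref{thm:iCB:odd}, and~\ref{thm:iCB:evK}. The first step is to substitute the $F\kk\Y$-expansion of Proposition~\ref{iDPK:odd:FkY} into $\dvd{n}\vodd$ and use $\Y^{(j)}\vodd = 0$ for $j\geq 1$ (since $E\vodd = 0$). Only the terms carrying the top $\Y$-index survive, namely $a = 2m-2c$ in the first double sum and $a = 2m-1-2c$ in the second double sum for $\dvd{2m}$, and analogously for $\dvd{2m+1}$. Each formula collapses to a single sum over $c$.

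\textbf{Conversion to the $F$-basis of $L(2\la+1)$.} Next, apply \eqref{kqbinom2} with $\la$ replaced by $\la+1$ (since $\vodd = \one_{2(\la+1)-1}\vodd$) to replace $\LR{\kk;\cdot}{c}\vodd$ by an explicit $q^2$-binomial scalar times $\vodd$, and use $K^{-1}\vodd = q^{-(2\la+1)}\vodd$. After a routine $q$-power accounting and the same conversion from $q^2$-binomials to $\cbinom{\cdot}{c}$ carried out in Theorem~\ref{thm:iCB:evK}, I expect to obtain identities of schematic form
\begin{align*}
\dvd{2m}\vodd &= \sum_{c} q^{-2c^2-c}\,\cbinom{m-\la-c-1}{c}\, F^{(2m-2c)}\vodd
 + \sum_{c} q^{(\cdots)}\,\cbinom{m-\la-c-1}{c}\, F^{(2m-1-2c)}\vodd,\\
\dvd{2m+1}\vodd &= \sum_{c} q^{-2c^2+c}\,\cbinom{m-\la-c}{c}\, F^{(2m+1-2c)}\vodd
 + \sum_{c} q^{(\cdots)}\,\cbinom{m-\la-c-1}{c}\, F^{(2m-2c)}\vodd,
\end{align*}
with exact exponents and binomial-index shifts determined by a direct odd-weight analogue of \eqref{dvp:2m-mod}--\eqref{dvp:2m+1-mod}.

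\textbf{Triangularity, vanishing, and part (1).} The $c=0$ summand on the leading diagonal yields $F^{(n)}\vodd$ with coefficient $1$, while all remaining summands lie in $q^{-1}\N[q^{-1}]\cdot F^{(j)}\vodd$ with $j<n$, by the analogue of \eqref{cbin2}. The vanishing $\dvd{n}\vodd = 0$ for $n\geq 2\la+3$ follows from the combined effect of (a) $F^{(j)}\vodd = 0$ for $j>2\la+1$ and (b) the vanishing $\cbinom{m-\la-c-1}{c} = 0$ in the appropriate range, the analogue of \eqref{cbin=0}. Together with the $\psi_\imath$-invariance of $\dvd{n}$ (built into its definition), these two properties identify $\{\dvd{n}\,\vodd\mid 0\le n\le 2\la+1\}$ as the $\imath$-canonical basis of $L(2\la+1)$ via the characterization recalled in~\S\ref{sec:iCB:ev}.

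\textbf{The overflow identity and part (2).} The one genuinely new point, distinguishing the present case from Theorem~\ref{thm:iCB:odd}, is the boundary identity $\dvd{2\la+2}\,\vodd = \dvd{2\la+1}\,\vodd$. At $n = 2\la+2$ the vanishing mechanisms above kill most contributions, leaving a single surviving summand coming from the $K^{-1}$-sum; evaluating $K^{-1}\vodd = q^{-(2\la+1)}\vodd$ and simplifying the $q$-powers should match it on the nose with $\dvd{2\la+1}\,\vodd$. This is the direct odd-weight analogue of $\dvp{2\la+1}\vev = \dvp{2\la}\vev$ in Theorem~\ref{thm:iCB:evK}, and it is where I expect the main book-keeping obstacle to lie, since it requires tracking cancellations between two $K^{-1}$-terms rather than a single one. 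Part (2) then follows immediately by passing to the projective limit of the system $\{L(2\la+1)\}_{\la\in\N}$ with $\Ui$-equivariant projections, exactly as in the proofs of Theorems~\ref{thm:iCB:ev},~\ref{thm:iCB:odd}, and~\ref{thm:iCB:evK}; cf. \cite[\S6]{BW16}.
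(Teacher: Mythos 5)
Your proposal follows essentially the same route as the paper's proof: evaluate the $F\kk\Y$-expansion of Proposition~\ref{iDPK:odd:FkY} on $\vodd$ (killing all $\Y^{(j)}$ with $j\ge 1$), collapse each expression to single sums in $\cbinom{\cdot}{c}$ via \eqref{kqbinom2} and $K^{-1}\vodd=q^{-(2\la+1)}\vodd$, read off unitriangularity with off-diagonal entries in $q^{-1}\N[q^{-1}]$, the vanishing for $n\ge 2\la+3$, and the boundary identity, and then pass to the projective system $\{L(2\la+1)\}_{\la\in\N}$ for part (2). The one misprediction is in the overflow step: at $n=2\la+2$ it is not ``a single surviving summand from the $K^{-1}$-sum'' --- rather the entire $K^{-1}$-sum of $\dvd{2\la+2}\vodd$ reproduces the leading sum of $\dvd{2\la+1}\vodd$, while the leading sum of $\dvd{2\la+2}\vodd$ loses only its $c=0$ term and, after reindexing $c\mapsto c+1$ and using $\cbinom{-c-1}{c+1}=\cbinom{-c}{c}$, reproduces the $K^{-1}$-sum of $\dvd{2\la+1}\vodd$ --- but this falls out of the very formulas you set up, so the plan goes through unchanged.
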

 
 \begin{proof}
Let $\la, m \in \N$.  We compute by Proposition~\ref{iDPK:odd:FkY} that 
\begin{align} 
\dvd{2m+2} \vodd 
& = \sum_{c=0}^{m+1} q^{-2c^2+c} \cbinom{m-\la-c}{c} F^{(2m+2-2c)}  \vodd
  \label{dvd:2m-mod}  
  \\&\quad + \sum_{c=0}^{m} q^{-2c^2-c+2m-2\la} \cbinom{m-\la-c}{c} F^{(2m+1-2c)}  \vodd;
\notag \\
\dvd{2m+1} \vodd  
&= \sum_{c=0}^m q^{-2c^2-c} \cbinom{m-\la-c}{c} F^{(2m+1-2c)}  \vodd
  \label{dvd:2m+1-mod} 
  \\&\quad + \sum_{c=0}^m q^{-2c^2-3c+2m-2\la-1} \cbinom{m-\la-c}{c} F^{(2m-2c)}  \vodd.
 \notag
\end{align}
We observe that 
\begin{align}  \label{oddK=0}
\begin{split}
  F^{(2m-2c)}  \vodd = F^{(2m+1-2c)}  \vodd = F^{(2m+2-2c)}  \vodd =0, & \quad \text{ if } c<m-\la. 
  \end{split}
\end{align}
It follows from \eqref{cbin=0}, \eqref{dvd:2m-mod}, \eqref{dvd:2m+1-mod} and \eqref{oddK=0}  
that $\dvd{2m} \vodd =\dvd{2m+1} \vodd =0$, for $m\ge \la+1$;
moreover, for $m\le \la$, we have
 \begin{align*}
 \dvd{2m} \vodd  & \in F^{(2m)}  \vodd + \sum_{c\ge 1}q^{-1} \N[q^{-1}] F^{(2m-2c)} \vodd,
 \\
 \dvd{2m+1} \vodd  & \in F^{(2m+1)}  \vodd + \sum_{c\ge 1}q^{-1} \N[q^{-1}] F^{(2m+1-2c)} \vodd,
  \\
 \dvd{2\la+2} \vodd  & = \dvd{2\la+1}  \vodd.
 \end{align*}
 Therefore, the first statement follows by the characterization properties of the $\imath$-canonical basis for $L(2\la)$.
 The second statement follows now from the definition of the $\imath$-canonical basis for $\Ui$ using
the projective system $\{ L(2\la) \}_{\la\in \N}$; cf. \cite[\S6]{BW16} (also cf. \cite[\S4]{BW13}). 
 \end{proof}

\subsection{Proof of Theorem~\ref{thm:iDPK:odd}}
  \label{sec:proof:iDPK:odd}

We prove by induction on $n$ for $\dvd{n}$, with the base cases for $n=0,1,2$ being clear.

\vspace{.3cm}
(1) We shall establish the formula \eqref{dvd2m} for $\dvd{2m}$ by assuming the formula for $\dvd{2m-1}$ below 
(which is obtained from \eqref{dvd2m+1} with $m$ replaced by $m-1$):
\begin{align}
 \label{dvd2m-1}
\dvd{2m-1} &=  \sum_{c=0}^{m-1} \sum_{a=0}^{2m-1-2c} q^{\binom{2c}{2} -a(2m-1-2c-a)} \Y^{(a)} \LR{\kk;2-m}{c}  F^{(2m-1-2c-a)}
\\
& \quad
+\sum_{c=0}^{m-1} \sum_{a=0}^{2m-2-2c} q^{\binom{2c+1}{2}-2m+2 -a(2m-2-2c-a)} \Y^{(a)}  \LR{\kk;2-m}{c} K^{-1}  F^{(2m-2-2c-a)}. 
\notag
\end{align}

Let us denote the 2 summands for $\dvd{2m-1}$ in \eqref{dvd2m-1} by $\texttt{S}_0, \texttt{S}_1$, and so 
\[
\dvd{2m-1} =\texttt{S}_0 +\texttt{S}_1.
\]
Using \eqref{sYa} we can rewrite $t \cdot \texttt{S}_0$ in the $\Y \kk F$ form as 
\[
t \cdot \texttt{S}_0 =\texttt{A}_0 +\texttt{A}_1, 
\]
where
\begin{align*}  
\texttt{A}_0
&=  \sum_{c=0}^{m-1} \sum_{a=0}^{2m-1-2c} q^{\binom{2c}{2} -a(2m-1-2c-a)} \cdot 
\notag \\
& \left( [a+1] \Y^{(a+1)}  \LR{\kk;2-m}{c}  F^{(2m-1-2c-a)}
 +q^{-2a} [2m-2c-a] \Y^{(a)}  \LR{\kk;1-m}{c}  F^{(2m-2c-a)} \right.
 \notag \\
& \qquad +
\left. \Y^{(a-1)} \frac{q^{3-3a} K^{-2} - q^{1-a} }{q^2-1}  \LR{\kk;2-m}{c}  F^{(2m-1-2c-a)} \right),
\notag
\\
\texttt{A}_1
&= \sum_{c=0}^{m-1} \sum_{a=0}^{2m-1-2c} q^{\binom{2c}{2} -a(2m-1-2c-a)-2a} 
\Y^{(a)}   \LR{\kk;2-m}{c} K^{-1} F^{(2m-1-2c-a)}.
\end{align*}

Using \eqref{sYa} we can also rewrite $t \cdot \texttt{S}_1$ in the $\Y \kk F$ form as 
\[
t \cdot \texttt{S}_1 =\texttt{B}_1 +\texttt{B}_0, 
\]
where
\begin{align*} 
\texttt{B}_1
&=  \sum_{c=0}^{m-1} \sum_{a=0}^{2m-2-2c} q^{\binom{2c+1}{2}-2m+2 -a(2m-2-2c-a)}
\cdot  \\
& \qquad \quad \left( [a+1] \Y^{(a+1)}  \LR{\kk;2-m}{c} K^{-1}  F^{(2m-2-2c-a)}
\right.
\\
& \qquad \qquad +q^{-2a-2} [2m-1-2c-a] \Y^{(a)}  \LR{\kk;1-m}{c} K^{-1}  F^{(2m-1-2c-a)} 
\\
& \qquad \qquad +
\left. \Y^{(a-1)} \frac{q^{3-3a} K^{-2} - q^{1-a} }{q^2-1} \LR{\kk;2-m}{c} K^{-1}  F^{(2m-2-2c-a)} \right),
\\
\texttt{B}_0
&= \sum_{c=0}^{m-1} \sum_{a=0}^{2m-2-2c} q^{\binom{2c+1}{2}-2m+2 -a(2m-2-2c-a) -2a} 
\Y^{(a)}  \LR{\kk;2-m}{c} K^{-2}  F^{(2m-2-2c-a)}.
\end{align*}

Hence by \eqref{eq:ttodd2:dvd} we have 
\begin{equation}   \label{eq:dd2}
[2m] \dvd{2m} = t \cdot \dvd{2m-1} =(\texttt{A}_0 +\texttt{B}_0) +(\texttt{A}_1+\texttt{B}_1).
\end{equation}
We shall rewrite $\texttt{A}_0+\texttt{B}_0$ in the form
\[
\texttt{A}_0 +\texttt{B}_0 =  \sum_{c=0}^m \sum_{a=0}^{2m-2c} \Y^{(a)} \texttt{f}_{a,c}^0(\kk) F^{(2m-2c-a)}, 
\]
where
\begin{align*}
\texttt{f}_{a,c}^0(\kk) &= q^{\binom{2c}{2} -(a-1)(2m-2c-a)}  [a]\,  \LR{\kk;2-m}{c}  \\
&\quad + \left( q^{\binom{2c}{2} -a(2m-1-2c-a)-2a} [2m-2c-a]\, \LR{\kk;1-m}{c} \right. \\
&\qquad\quad \left. +q^{\binom{2c-2}{2} -(a+1)(2m-2c-a)}  \frac{q^{-3a} K^{-2} - q^{-a} }{q^2-1} \LR{\kk;2-m}{c-1} \right.
\\
&\qquad\quad \left. + q^{\binom{2c-1}{2}-2m+2 -a(2m-2c-a) -2a} 
\Y^{(a)}  \LR{\kk;2-m}{c-1} K^{-2} \right). 
\end{align*}

A direct computation shows that
\begin{align*}
& \texttt{f}_{a,c}^0(\kk) 
\\
&= q^{\binom{2c-1}{2}-1 -a(2m-2c-a)} \cdot q^{2m-a} [a] \,  \LR{\kk;2-m}{c} \\
&\qquad + q^{\binom{2c-1}{2}-1 -a(2m-2c-a)} \LR{\kk;2-m}{c-1}  \cdot
\\
&\qquad
 \left( q^{2c-a} [2m-2c-a] \frac{q^{4-4m}K^{-2}-q^2}{q^{4c}-1}
 +q^{a+3-2m}  \frac{q^{-3a} K^{-2} - q^{-a} }{q^2-1}  + q^{3-2m -2a} K^{-2}
 \right)  
\\%
&= q^{\binom{2c-1}{2}-1 -a(2m-2c-a)} \left( q^{2m-a} [a] \,  \LR{\kk;2-m}{c}  + \LR{\kk;2-m}{c-1} q^{-a} [2m-a]  \frac{q^{4c+4-4m}K^{-2}-q^2}{q^{4c}-1} \right)
\\%
&= q^{\binom{2c-1}{2}-1 -a(2m-2c-a)} [2m] \LR{\kk;2-m}{c}.
\end{align*}

On the other hand, we shall rewrite $\texttt{A}_1+\texttt{B}_1$ in the form
\[
\texttt{A}_1 +\texttt{B}_1 =  \sum_{c=0}^{m-1} \sum_{a=0}^{2m-1-2c} \Y^{(a)} \texttt{f}_{a,c}^1(\kk) K^{-1} F^{(2m-1-2c-a)}, 
\]
where
\begin{align*}
\texttt{f}_{a,c}^1(\kk) &= q^{\binom{2c}{2} -a(2m-1-2c-a)-2a}    \LR{\kk;2-m}{c} 
+  q^{\binom{2c+1}{2}-2m+2 -(a-1)(2m-1-2c-a)} [a]\, \LR{\kk;2-m}{c}  \\
&\qquad\quad  + q^{\binom{2c+1}{2}-2m+2 -a(2m-2-2c-a) -2a-2} [2m-1-2c-a] \LR{\kk;1-m}{c}
\\
&\qquad\quad +q^{\binom{2c-1}{2}-2m+2 -(a+1)(2m-1-2c-a)} \frac{q^{-3a} K^{-2} - q^{-a} }{q^2-1} \LR{\kk;2-m}{c-1}
\\
&=: \texttt{U}_1 +\texttt{U}_2.
\end{align*}
In the above we have denoted by $\texttt{U}_1$  the sum of the first two summands 
and by $\texttt{U}_2$ the sum of the last 2 summands of $\texttt{f}_{a,c}^1(\kk)$.
A simple computation shows that
\begin{align*}
\texttt{U}_1
 &= q^{\binom{2c}{2}+1-2m  -a(2m-1-2c-a)} \cdot   (q^{2m-2a-1} + q^{2m-a}[a])  \LR{\kk;2-m}{c}
 \\
 &= q^{\binom{2c}{2}+1-2m  -a(2m-1-2c-a)} \cdot   q^{2m-a-1}[a+1]  \LR{\kk;2-m}{c}.
\end{align*}
Moreover, a direct computation shows that $\texttt{U}_2$ is equal to
\begin{align*}
&=  q^{\binom{2c}{2}+1-2m  -a(2m-1-2c-a)} \LR{\kk;2-m}{c-1}  \cdot
\\
& \qquad 
\left(
q^{2c-a-1} [2m-1-2c-a] \frac{q^{4-4m}K^{-2}-q^2}{q^{4c}-1}
+ q^{3-2m+a} \frac{q^{-3a} K^{-2} - q^{-a} }{q^2-1} 
\right)
\\
& =q^{\binom{2c}{2}+1-2m  -a(2m-1-2c-a)}  \LR{\kk;2-m}{c-1} \cdot 
\left(
q^{-a-1} [2m-1-a] \frac{q^{4c+4-4m}K^{-2}-q^2}{q^{4c}-1}
\right)
\\
&= q^{\binom{2c+1}{2}-2m -a(2m-2c-a)} \cdot q^{-a-1} [2m-a-1]  \LR{\kk;2-m}{c}.
\end{align*}
Hence we conclude that 
\[
\texttt{f}_{a,c}^1(\kk) =  \texttt{U}_1 +\texttt{U}_2 
=q^{\binom{2c}{2}+1-2m  -a(2m-1-2c-a)} [2m]  \LR{\kk;2-m}{c}.
\]
The formula \eqref{dvd2m} follows from \eqref{eq:dd2} and
the formulae of $\texttt{f}_{a,c}^0(\kk)$ and $\texttt{f}_{a,c}^1(\kk)$ above.

\vspace{.3cm}

(2) Now we shall prove the formula \eqref{dvd2m+1}  for $\dvd{2m+1}$ 
by assuming the formula \eqref{dvd2m} for $\dvd{2m}$.

Let us denote the 2 summands for $\dvd{2m}$ in \eqref{dvd2m} by $\texttt{T}_0, \texttt{T}_1$, and so 
\[
\dvd{2m} =\texttt{T}_0 +\texttt{T}_1.
\]
Using \eqref{sYa} we can rewrite $t \cdot \texttt{T}_0$ in the $\Y \kk F$ form as 
\[
t \cdot \texttt{T}_0 =\texttt{C}_0 +\texttt{C}_1, 
\]
where
\begin{align*}   
\texttt{C}_0
&=  \sum_{c=0}^m \sum_{a=0}^{2m-2c} q^{\binom{2c-1}{2}-1 -a(2m-2c-a)}
\cdot 
\notag \\
& \left( [a+1] \Y^{(a+1)}   \LR{\kk;2-m}{c}  F^{(2m-2c-a)}
 +q^{-2a} [2m+1-2c-a] \Y^{(a)}   \LR{\kk;1-m}{c}  F^{(2m+1-2c-a)} \right.
 \notag \\
& \qquad +
\left. \Y^{(a-1)} \frac{q^{3-3a} K^{-2} - q^{1-a} }{q^2-1}  \LR{\kk;2-m}{c}  F^{(2m-2c-a)} \right),
\notag
\\
\texttt{C}_1
&= \sum_{c=0}^m \sum_{a=0}^{2m-2c} q^{\binom{2c-1}{2}-1 -a(2m-2c-a)-2a} 
\Y^{(a)}  \LR{\kk;2-m}{c}  K^{-1} F^{(2m-2c-a)}.
\end{align*}

Using \eqref{sYa} we can also rewrite $t \cdot \texttt{T}_1$ in the $\Y \kk F$ form as 
\[
t \cdot \texttt{T}_1 =\texttt{D}_1 +\texttt{D}_0, 
\]
where
\begin{align*} 
\texttt{D}_1
&=  \sum_{c=0}^{m-1} \sum_{a=0}^{2m-1-2c} q^{\binom{2c}{2}+1-2m  -a(2m-1-2c-a)}
\cdot 
\notag \\
&\qquad  \left( [a+1] \Y^{(a+1)}  \LR{\kk;2-m}{c} K^{-1}  F^{(2m-1-2c-a)} \right.
\\
&\qquad\qquad +q^{-2a-2} [2m-2c-a] \Y^{(a)}   \LR{\kk;1-m}{c} K^{-1}  F^{(2m-2c-a)} 
 \notag \\
& \qquad\qquad 
 + \left. \Y^{(a-1)} \frac{q^{3-3a} K^{-2} - q^{1-a} }{q^2-1}  \LR{\kk;2-m}{c} K^{-1}  F^{(2m-1-2c-a)} \right),
\notag
\\
\texttt{D}_0
&= \sum_{c=0}^{m-1} \sum_{a=0}^{2m-1-2c} q^{\binom{2c}{2}+1-2m  -a(2m-1-2c-a) -2a} 
\Y^{(a)}  \LR{\kk;2-m}{c} K^{-2}  F^{(2m-1-2c-a)}.
\end{align*}

We denote the two summands in \eqref{dvd2m-1} by $\texttt{G}_0$ and $\texttt{G}_1$, so that 
\[
\dvd{2m-1} =\texttt{G}_0 +\texttt{G}_1.
\] 
Hence by \eqref{eq:ttodd2:dvd} we have 
\begin{align}
  \label{eq:CDG01K}
\begin{split}
[2m+1] \dvd{2m+1} &= t \cdot \dvd{2m} -[2m] \dvd{2m-1} 
\\
&=(\texttt{C}_0 +\texttt{D}_0 -[2m]\texttt{G}_0) +(\texttt{C}_1+\texttt{D}_1 -[2m]\texttt{G}_1).
\end{split}
\end{align} 
We shall write $\texttt{C}_0+\texttt{D}_0 -[2m]\texttt{G}_0$ in the form
\[
\texttt{C}_0 +\texttt{D}_0 -[2m]\texttt{G}_0 =  \sum_{c=0}^{m} \sum_{a=0}^{2m+1-2c} \Y^{(a)} \texttt{g}_{a,c}^0(\kk) F^{(2m+1-2c-a)}.
\]
Indeed $\texttt{g}_{a,c}^0 (\kk)$ can be organized (by separating the second summand in $\texttt{C}_0$) as
\begin{align}
  \label{gac0K}
\texttt{g}_{a,c}^0 (\kk) &= q^{\binom{2c-1}{2}-1 -a(2m-2c-a) -2a}  [2m+1-2c-a]\, \LR{\kk;1-m}{c}  +    \texttt{X}_0, 
\end{align}
where  
\begin{align*}
\texttt{X}_0 &= q^{\binom{2c-1}{2}-1 -(a-1)(2m+1-2c-a)}   [a]\,  \LR{\kk;2-m}{c} \\
&\quad   +q^{\binom{2c-3}{2}-1 -(a+1)(2m+1-2c-a)}  \frac{q^{-3a} K^{-2} - q^{-a} }{q^2-1} \LR{\kk;2-m}{c-1} 
\\
&\quad +  q^{\binom{2c-2}{2}+1-2m  -a(2m+1-2c-a) -2a}  \LR{\kk;2-m}{c-1}  K^{-2} \\
&\quad   - q^{\binom{2c-2}{2} -a(2m+1-2c-a)}  [2m] \LR{\kk;2-m}{c-1}.
\end{align*}
We  rewrite $\texttt{X}_0 = q^{\binom{2c}{2} -a(2m+1-2c-a)} \LR{\kk;2-m}{c-1} \texttt{Z}_0$,
where
\begin{align*}
\texttt{Z}_0 &= q^{2m-a-4c+1} [a] \frac{q^{4c+4-4m} K^{-2} - q^2}{q^{4c}-1} 
+q^{4-2m+a-4c} \frac{q^{-3a} K^{-2} - q^{-a} }{q^2-1} 
\\
&\quad + q^{4-2m-4c-2a} K^{-2} 
-q^{3-4c} [2m]. 
\end{align*}
A direct computation shows that 
\[
\texttt{Z}_0 =q^{2m+1-2c-a} [2c+a] \frac{q^{4-4m} K^{-2} - q^2}{q^{4c}-1},
\]
and this give us
\begin{align*}
\texttt{X}_0 =q^{\binom{2c}{2} -a(2m+1-2c-a)} q^{2m+1-2c-a}  [2c+a]\, \LR{\kk;1-m}{c}.
\end{align*}
Plugging this formula for $\texttt{X}_0$ into \eqref{gac0K} we obtain
\begin{align}
\label{gac0K:answer}
\texttt{g}_{a,c}^0 (\kk) 
&= q^{\binom{2c}{2} -a(2m+1-2c-a)} [2m+1] \LR{\kk;1-m}{c}.
\end{align}

Now let us simplify  $\texttt{C}_1+\texttt{D}_1 -[2m]\texttt{G}_1$ by writing it in the form
\[
\texttt{C}_1 +\texttt{D}_1 -[2m]\texttt{G}_1 =  \sum_{c=0}^{m} \sum_{a=0}^{2m-2c} \Y^{(a)} \texttt{g}_{a,c}^1(\kk) K^{-1} F^{(2m-2c-a)}.
\]
Indeed $\texttt{g}_{a,c}^1 (\kk)$ can be organized (by pulling out the second summand of $\texttt{D}_1$) as
\begin{align}
  \label{gac1K}
\texttt{g}_{a,c}^1 (\kk) &=  q^{\binom{2c}{2}+1-2m  -a(2m-1-2c-a) -2a-2} [2m-2c-a] \LR{\kk;1-m}{c}  + \texttt{X}_1, 
\end{align}
where  
\begin{align*}
\texttt{X}_1 &=
q^{\binom{2c-1}{2}-1 -a(2m-2c-a)-2a}  \LR{\kk;2-m}{c} 
\\
&\qquad + q^{\binom{2c}{2}+1-2m  -(a-1)(2m-2c-a)} [a]   \LR{\kk;2-m}{c}   
\\
& \qquad + q^{\binom{2c-2}{2}+1-2m  -(a+1)(2m-2c-a)}   \frac{q^{-3a} K^{-2} - q^{-a} }{q^2-1} \LR{\kk;2-m}{c-1}
 \\
&\quad   - q^{\binom{2c-1}{2}-2m+2 -a(2m-2c-a)} [2m] \LR{\kk;2-m}{c-1}.
\end{align*}

We  rewrite 
\[
\texttt{X}_1 = q^{\binom{2c+1}{2}-2m -a(2m-2c-a)}  \LR{\kk;2-m}{c-1} \texttt{Z}_1,
\]
where
\begin{align*}
\texttt{Z}_1 &= q^{2m-2a-4c} \frac{q^{4c+4-4m} K^{-2} - q^2}{q^{4c}-1} 
+q^{1+2m-a-4c} [a] \frac{q^{4c+4-4m} K^{-2} - q^2}{q^{4c}-1} 
\\
&\qquad+q^{4-2m+a-4c} \frac{q^{-3a} K^{-2} - q^{-a} }{q^2-1} 
 -q^{3-4c} [2m]. 
\end{align*}

A direct computation shows that
\begin{align*}
\texttt{Z}_1 
&= q^{2m-2c-a} [2c+a+1] \frac{q^{4-4m} K^{-2} -q^2}{q^{4c}-1},
\end{align*}
and this gives us
\begin{align*}
\texttt{X}_1 = q^{\binom{2c+1}{2}-2m -a(2m-2c-a)}  q^{2m-2c-a} [2c+a+1] \,  \LR{\kk;1-m}{c}.
\end{align*}
Plugging this formula for $\texttt{X}_1$ into \eqref{gac1K} we obtain
\begin{align}
\label{gac1K:answer}
\texttt{g}_{a,c}^1 (\kk) & =q^{\binom{2c+1}{2}-2m -a(2m-2c-a)} [2m+1]  \LR{\kk;1-m}{c}.
\end{align}

The formula for $\dvd{2m+1}$ now follows from \eqref{eq:CDG01K}, \eqref{gac0K:answer} and \eqref{gac1K:answer}.

This completes the proof of Theorem~\ref{thm:iDPK:odd}.


\end{document}